\documentclass[reqno]{amsart}

\usepackage[utf8]{inputenc}

\usepackage[T1]{fontenc}
\usepackage{listings}
\usepackage{titlesec}
\usepackage{pict2e}
\usepackage{comment}
\usepackage{textcase}
\usepackage{tikz}
\usetikzlibrary{arrows.meta}

\usepackage{epsf,amsfonts,amsmath,amsthm}
\usepackage{booktabs,subcaption,amsfonts,dcolumn}
\newcolumntype{d}[1]{D..{#1}}

\usepackage{color}

\definecolor{codegreen}{rgb}{0,0.6,0}
\definecolor{codegray}{rgb}{0.5,0.5,0.5}
\definecolor{codepurple}{rgb}{0.58,0,0.82}
\definecolor{backcolour}{rgb}{0.95,0.95,0.92}

\usepackage{amsthm}

\newtheorem{theorem}{Theorem}[section]
\newtheorem{lemma}[theorem]{Lemma}
\newtheorem{defin}[theorem]{Definition}

\newtheorem{corollary}[theorem]{Corollary}
\newtheorem{remark}[theorem]{Remark}
\newtheorem{assumption}[theorem]{Assumption}
\newtheorem{example}[theorem]{Example}

\usepackage{graphicx}
\usepackage{amsmath,amsfonts,amsthm}
\usepackage{amsmath,stackengine}
\usepackage{algorithm}
\usepackage{amssymb}

\usepackage[noend]{algpseudocode}
\usepackage{accents}
\usepackage{stmaryrd} 

\titleformat{\subsection}[runin]
  {\normalfont\large\bfseries}{\thesubsection}{1em}{}

\makeatletter
\renewcommand{\@seccntformat}[1]{\csname the#1\endcsname. }
\makeatother

\usepackage{calrsfs}
\DeclareMathAlphabet{\pazocal}{OMS}{zplm}{m}{n}

\newcommand{\Ca}{\pazocal{C}}
\newcommand{\Da}{\pazocal{D}}
\newcommand{\Ea}{\pazocal{E}}
\newcommand{\Fa}{\pazocal{F}}
\newcommand{\Gg}{\pazocal{G}}

\newcommand{\Ia}{\pazocal{I}}
\newcommand{\Ja}{\pazocal{J}}
\newcommand{\Ka}{\pazocal{K}}

\newcommand{\Pa}{\pazocal{P}}
\newcommand{\Qa}{\pazocal{Q}}
\newcommand{\Ra}{\pazocal{R}}

\newcommand{\Ta}{\pazocal{T}}

\newcommand{\hh}{\mathfrak{h}}
\newcommand{\Kk}{\mathtt{K}}

\renewcommand{\vec}[1]{\mathbf{#1}}
\renewcommand{\d}{\,\mathrm{d}}
\newcommand{\inv}{\mathrm{inv}}
\newcommand{\INV}{\mathrm{INV}}

\numberwithin{equation}{section}

\newcommand*{\lbrac}{\{\mskip-6mu\{}
\newcommand*{\rbrac}{\}\mskip-6mu\}}
\newcommand*{\trn}{|\mskip-1mu|\mskip-1mu|}

\newcommand{\vx}{{\tiny\textbullet } }

\newcommand{\mycomment}[1]{}

\usepackage{amsthm}

\numberwithin{equation}{section}

\title[Robust $\lowercase{hp}$-IPDG methods for the $\lowercase{p}$-Laplacian]{$\lowercase{hp}$-Version robust interior penalty discontinuous Galerkin methods for the $\lowercase{p}$-Laplacian on simplicial and on essentially arbitrarily-shaped element meshes}

\author[E. H. Georgoulis]{Emmanuil H. Georgoulis}
\address{The Maxwell Institute for Mathematical Sciences \& Department of Mathematics, Heriot-Watt University, Edinburgh EH14 4AS, United Kingdom and Department of Mathematics, School of Mathematical and Physical Sciences, National Technical University of Athens, Zografou 15780, Greece and IACM, FORTH, 70013 Heraklion, Crete, Greece.}
\email{E.Georgoulis@hw.ac.uk}

\author[P. Paraschis]{Panagiotis Paraschis}
\address{Institute of Analysis and Scientific Computing, Vienna University of Technology,
Wiedner Hauptstra{\upshape{\ss}}e 8-10, A-1040 Vienna, Austria and Faculty of Mathematics, University of Vienna, Oscar-Morgestern-Platz 1, A-1090
Vienna, Austria.}
\email{panagiotis.paraschis@asc.tuwien.ac.at}
\date{}

\begin{document}

\begin{abstract}
We consider the discretization of the $p$-Laplacian equation with an interior penalty discontinuous Galerkin method. We prove novel trace-type inverse estimates, leading to unconditional stability of the method. Further, $hp$-version \emph{a priori}  norm and quasi-norm error estimates are established, subordinate to available polynomial approximation results. The analysis is extended to discontinuous Galerkin methods, based on meshes with essentially arbitrarily-shaped, curved polygonal/polyhedral elements. This extension requires the proof of new $hp$-version weighted inverse estimates on essentially arbitrarily-shaped elements. Numerical experiments are also presented, highlighting the relevance of the theoretical findings.
\end{abstract}

\maketitle

\section{Introduction}\label{DG_intro}
Equations involving the $p$-Laplacian serve as model problems in a plethora of nonlinear diffusion settings, stemming from mathematical biology, chemical kinetics or non-Newtonian fluid mechanics \cite{diensul2013, ko2018, malek2005, philip1961, ruzicka2000}. The nonlinear nature of the $p$-Laplacian-type diffusion results into a number of numerical challenges, such as the construction of unconditionally stable methods, robustness with respect to the non-linearity index, and solvers, to mention a few.

To fix ideas, we consider a boundary value problem:
\begin{equation}\label{BVP}
    \begin{split}
        -\operatorname{div}\left(|\nabla u|^{p-2}\nabla u\right) &= f \text{ in }\Omega, \\
        u &= 0 \text{ on }\partial\Omega,
    \end{split}
\end{equation}
 for $p\in(2,\infty)$, $p'=p/(p-1)$, whereby $f\in L^{p'}(\Omega)$ denotes the forcing term and $\Omega\subset\mathbb{R}^d$ $(d=1,2,3)$ is a  bounded domain. For the first part of the present work, we shall assume that the domain is polytopic (that is, polygonal for $d=2$ or polyhedral for $d=3$), allowing for $\Omega$ to be partitioned exactly by simplicial meshes. In Section \ref{sec_DGpoly}, this assumption will be relaxed to piecewise curved geometries, that can be partitioned by general meshes, consisting of curved polytopic elements with essentially arbitrary shape. In this work, we shall consider the case $p\in(2,\infty)$ only. This is done for technical reasons related to the availability of suitable trace-inverse estimates.  The results presented below can be immediately modified for non-homogeneous Dirichlet, or mixed Dirichlet/Neumann boundary conditions in standard fashion. Nevertheless, we shall only consider the homogeneous Dirichlet case to avoid further notational overhead.

The development and analysis of numerical methods for the elliptic $p$-Laplacian has been extensively studied in the literature. Considerable efforts have focused on the study of conforming finite element approximations of \eqref{BVP}, dating back to the pioneering work of Glowinski and Marroco \cite{GloMar}, where suboptimal error estimates with respect to the $W^{1,p}(\Omega)$-norm are established. These bounds were later improved by Chow \cite{Chow1989}, and subsequently sharpened by Barrett and Liu \cite{Barrett1993}, who also established optimal $W^{1,q}(\Omega)$-error estimates, for $q\in(1,2)$. The above works consider piecewise linear finite element methods for the discretization of \eqref{BVP}, due to the limited regularity of the solution. For high-order approximations, we refer to the works of Ainsworth and Kay \cite{AinsKay2000, AinsKay1999}, considering conforming $p$- and $hp$-version finite element methods, where $W^{1,p}(\Omega)$-error estimates are derived\footnote{When referring to $p$- and $hp$-version finite element methods, we mean the classical setting whereby convergence is achieved by increasing the local polynomial degree and/or reducing the local mesh-size; this is not to be confused with the $p$-Laplacian, which refers to the power in the nonlinear diffusion operator. To avoid further confusion, in this work, local polynomial degrees will be denoted by symbols involving the letter ``$r$''.}.  \emph{A posteriori} error analysis, for a nonconforming Crouzeix--Raviart-type method have been shown in \cite{lynm2001}, and for conforming finite elements in \cite{ LiuYan2001,CarLiuYan2006}. We mention the works of Andreianov, Boyer and Hubert \cite{Andri1, Andri2}, employing finite volume methods for \eqref{BVP}.

In the past two decades, discontinuous Galerkin (DG) methods for the $p$-Laplacian have gained attention in the literature, starting with the pioneering work of Burman and Ern \cite{BurErn2008}, whereby a local discontinuous Galerkin (LDG) method was developed employing  jump-lifting operators, stemming from the discretization of the underlying energy minimization problem; convergence is proved via compactness arguments. The results have been later extended by Buffa and Ortner \cite{Buffa2009} and Diening et al. \cite{Diening2014}. The latter contains quasi-norm error estimates and in fact, represents a broader class of nonlinear elliptic problems with $p$-structure, including the $p$-Laplacian. The use of  jump-lifting operators was earlier pioneered by Ten Eyck and Lew for DG methods for nonlinear elasticity problems \cite{EyckLew2006}. Jump-lifting operators lead to larger stencils than standard interior-penalty DG (IPDG) methods, due to local solution of additional linear systems with the local mass matrix on each element. The methods in \cite{BurErn2008, Diening2014} introduce consistency errors, resulting to suboptimality of the corresponding DG schemes for $p\neq 2$. A quasi-optimal discontinuous Galerkin method has been developed by Blechta et al. \cite{Blechta2023}, following the approach of Veeser and Zanotti \cite{VesZan}, whereby a smoothing operator is employed to provide conformity in the test function. This approach eliminates the skeleton integrals appearing in discontinuous Galerkin methods. For the discretization of \eqref{BVP} with mixed hybridizable discontinuous Galerkin (HDG) methods, we refer to \cite{CoShen2016}.

Further in Malkmus et al. \cite{Ruz2018}, quasi-norm \emph{a priori} error estimates are proven for two different discontinuous Galerkin methods for systems with $p$-structure: a jump-lifting discontinuous Galerkin (DG-lifting) method resembling the LDG schemes in \cite{BurErn2008, Diening2014,Blechta2023} , and a symmetric IPDG method. The numerical examples with piecewise linear discontinuous Galerking spaces that are presented in that paper indicate that indeed, the DG-lifting method --- as well as the LDG scheme --- has higher complexity than the IPDG method, whereby jump-liftings are not employed. 
On the other hand, the error bounds of the IPDG method in  \cite{Ruz2018} are of first order only for any polynomial degree; hence, they are optimal for piecewise linear elements, or for high-order elements with low-regularity solutions. However, these estimates do not improve for high-order elements under higher regularity assumptions. This is due to the \emph{absence} of an appropriate trace inverse estimate that is subordinate to the $p$-structure. The low global regularity has been used as justification to consider only low-order methods in the literature. Typically, however, although \eqref{BVP} may not admit solutions with high global regularity properties, these solutions may admit high local regularity in sub-domains of $\Omega$.

The development of IPDG methods is of significant interest as they typically admit ``compact'' stencils, (compared to DG methods using liftings,) thereby leading to minimal communication in the context of domain decomposition and, thus, facilitating the use of parallel computer architectures. Moreover, as we shall see below, IPDG methodologies are easy to be generalized to admit general element shapes, which may lead to considerable complexity reduction \cite{CanDongGeoHoustSB}.

In response to the above shortcomings in the literature, the first part of this work is devoted to the development and analysis of an $hp$-version \emph{robust} interior penalty discontinuous Galerkin method on simplicial meshes for \eqref{BVP} admitting \emph{arbitrary} order for regular enough solutions for exponents $p\in[2,\infty)$. The IPDG method below is related to the one in \cite{Ruz2018}, combined with the robust IPDG point of view for linear elliptic problems proposed in \cite{DongGeo2022}. This new robust IPDG is stable under extreme local mesh and polynomial degree variations and has favourable properties in the context of non-overlapping domain decomposition offering minimal ``zero-halo'' communication between elements, irrespectively of the local polynomial degree combinations used.

To that end, we begin by proving a new class of polynomial trace-inverse estimates of the form 
\begin{equation*}
	\int_F |v|^q \d s \leq 2^{q+1}C_{\inv,d}\frac{(2r^2+1)|F|}{|K|}\int_K |v|^q \d\vec{x} \quad\forall v\in \Pa_r(K);
\end{equation*}
here, $q\in(0,\infty)$, $K$ is a $d$-dimensional simplex, $F$ is one of its faces, $r\in\mathbb{N}_0$ and $C_{\inv,d}$ is a constant, depending only on the dimension $d$. Note that, quite surprisingly, the above estimate holds also for $q\in(0,1)$, whence $\|\cdot\|_{L^q(K)}$ is not a norm. This result may be of independent interest.  The above inequality is the key step to prove a new trace-inverse estimate for quasi-norms for $p\in\mathbb{Q}$, which, in turn, is used for the proof of stability of the method and determines the, so-called, discontinuity-penalization (a.k.a. penalty) parameter of the method. Nevertheless, the robust stability of the method is established for all $p\in[2,\infty)$.

Further, \emph{a priori} error analysis is carried out in a broken Sobolev quasi-norm, requiring the new non-trivial quasi-norm inverse estimates. For the extraction of convergence rates with respect to the mesh-size and the local polynomial degrees, $hp$-version best approximation estimates in non-Hilbertian Sobolev norms
are necessary. These are unavailable in the literature, to the best of our knowledge. To ensure generality for the results presented below, we prove the error bounds upon a generic assumption of $hp$-convergence of suitable polynomial approximations. Furthermore, we also provide a specific $hp$-version best approximation result (with reduced rates with respect to the polynomial degrees) through the application of Gagliardo--Nirenberg interpolation inequalities and standard $hp$-approximation results in Hilbertian Sobolev norms from \cite{BabuskaSuri1987, MunozSola1997}. We stress that $p$-suboptimality of the final error estimates is not a result of the analysis of the method itself, but it arises from the (non-)availability of polynomial approximation results.

The development of robust IPDG methods on general curved polytopic elements offers the possibility of exact approximation of complex geometries without proliferation of numerical degrees of freedom (see, e.g., \cite{CanDongGeoHoustSB} for some examples,) and also can form a basis for the development of domain decomposition frameworks. These aspects become particularly pertinent in the present context whereby each non-linear iteration requires multiple runs of linear solvers. To that end, the second part of the present work is devoted to the extension of the robust IPDG method for \eqref{BVP} to meshes comprising essentially arbitrarily-shaped elements. In particular, we are able to extend the framework in Cangiani, Dong and Georgoulis \cite{CanDongGeo2021} to the robust IPDG method for $p$-Laplacian; see also \cite{CanDongGeoHoustSB}  for a detailed exposition of DG methods on polytopic meshes as and the review article \cite{antonietti} for the respective developments until then. Again, a key step is the proof of new polynomial trace-inverse estimates which, together with $hp$-version polynomial approximation results on curved polytopic elements,  lead to the proof of stability and convergence of the underlying robust IPDG method. The results hold under extremely mild assumptions on the admissible element shapes: elements with Lipschitz boundaries and \emph{piecewise} star-shapedeness conditions are enforced; see Section \ref{sec_DGpoly} for precise definitions.

The remainder of this work is structured as follows. In Section \ref{section_pre}, we set the notation and the finite element spaces. In Section \ref{DG_method}, we define the discontinuous Galerkin method. In Section \ref{DG_inverse}, we derive the key inverse estimates and polynomial approximation results. In Section \ref{DG_monot_cont}, we prove the monotonicity, boundedness and coercivity conditions of the underlying nonlinear form. In Section \ref{DG_staberr}, we prove the main stability and convergence results. In Section \ref{sec_DGpoly}, we generalize the results to curved polytopic meshes. Section \ref{DG_numerics} is devoted to the numerical experiments.

\section{Preliminaries}\label{section_pre}

Let $\omega\subset\mathbb{R}^d$, $d=1,2,3$ be a bounded domain, and let $\gamma\subset\mathbb{R}^d$ be a $(d-1)$-dimensional manifold for $d=2,3$. For all $q\in[1,\infty]$, $\ell\in[0,\infty)$ we denote by $L^q(\gamma)$, $L^q(\omega)$ and $W^{\ell,q}(\omega)$ the standard Lebesgue and Sobolev spaces, endowed with norms $\|\cdot\|_{L^q(\gamma)}$, $\|\cdot\|_{L^q(\omega)}$, $\|\cdot\|_{W^{\ell,q}(\omega)}$, respectively. We also consider the case $q\in(0,1)$ with a slight abuse of notation, since the latter are not norms in this case. The latter symbols will be used also to denote the norms of the spaces $L^q(\gamma;\mathbb{R}^d)$, $L^q(\omega;\mathbb{R}^d)$ and $W^{\ell,q}(\omega;\mathbb{R}^d)$ of vector-valued functions. In the special case $q=2$, we use the notation $H^\ell(\omega) = W^{\ell,2}(\omega)$, with the corresponding norm being denoted by $\|\cdot\|_{H^\ell(\omega)}$. Moreover, $W^{1,q}_0(\omega)$ and $H^1_0(\omega)$ refer to the spaces of functions in $W^{1,q}(\omega)$ and $H^1(\omega)$, respectively, having zero traces on the boundary $\partial\omega$.

\subsection{Notation and approximation spaces.}\label{DG_fes}
Aiming for a clearer exposition, we shall first consider approximation spaces defined over families of simplicial meshes and then, in Section \ref{sec_DGpoly}, extension to meshes comprising general curved polytopic element shapes. Much of the notation defined in this section extends trivially to the setting of Section \ref{sec_DGpoly} also.

First, we consider a family of simplicial meshes $\{\Ta\}$ of $\Omega$, that are assumed to be shape-regular in the sense that there exists a constant $c_\mathrm{sr}$, independent of $\Ta$, such that
\begin{equation*}
    h_K / \rho_K \leq c_\mathrm{sr} \text{ for all } K\in\Ta,
\end{equation*}
uniformly for all $\Ta$, where $\rho_K$ is the radius of the largest inscribed ball of $K$ and $h_K=\operatorname{diam}(K)$. Note that hanging nodes for $d=2$, or hanging edges for $d=3$, are allowed.  Each simplicial element $K\in\Ta$ has exactly $d+1$ $(d-1)$-dimensional faces which we denote by $e_1(K),\dots,e_{d+1}(K)$.

Let $\Gamma = \cup_{K\in\Ta}\partial K$ denote the mesh skeleton. We call $F\subset\Gamma$ an {\it interface} of $\Ta$, if $F=\partial K^+\cap \partial K^-$ whenever $F\subset\Gamma\setminus\partial\Omega$ for two adjacent elements $K^+,K^-\in\Ta$, or
$F=\partial K\cap \partial\Omega$ for an element $K\in\Ta$ adjacent to the boundary.

In other words, $F$ will be called an interface, if it is the total intersection of two neighboring element boundaries, or if it is a boundary face. We also define the set $\Fa := \{F\subset\Gamma: \text{ } F \text{ is an interface of } \Ta\}$. 

For $K\in\Ta$, if $F\in\Fa$ is an interface of $\Ta$ that lies on $\partial K$, then $F\subset e_j(K)$, for a unique $j\in\{1,\dots,d+1\}$, i.e., an interface $F\in\Fa$ is either an entire face, or part of a unique face. We define the face $F_K$ corresponding to an interface $F\in\Fa$ via
\begin{equation*}
    F_K := e_j(K) \text{ for the unique } j\in\{1,\dotso,d+1\} \text{ such that } F\subset e_j(K),
\end{equation*}
see Figure \ref{fig:faces} for a two-dimensional example. In particular, if an interface $F\in\Fa$ does not contain any hanging nodes/edges, then $F_K = F$, otherwise $F_K$ is a union of two or more co-planar interfaces. The $(d-1)$-dimensional measure $|F_K|$ of $F_K$ is an important mesh parameter appearing in the definition of the discontinuity-penalization parameter below. Note that \emph{no} assumption between the relatives sizes of $F_K$ and of $F$ is needed.

\begin{figure}[h!]
    \centering
    \begin{tikzpicture}
    \draw[thick] (2.25, 4.299038105) -- (0, 0.40192379);
    \draw[thick] (2.25, 4.299038105) -- (4.5, 0.40192379);
    \draw[thick] (0, 0.40192379) -- (4.5, 0.40192379);

    \draw[blue, thick] (0, 0.40192379) -- (1.5, 0.40192379);
    \filldraw[red] (1.5, 0.40192379) circle (2pt);

    \filldraw[black] (2.25, 4.299038105) circle (2pt);
    \filldraw[black] (0, 0.40192379) circle (2pt);
    \filldraw[black] (4.5, 0.40192379) circle (2pt);

    \node at (2.25,2) {$K$};
    \node at (2.6,-0.10192379) {$e_1(K)=F_K$};
    \node at (3.7,3) {$e_2(K)$};
    \node at (0.8,3) {$e_3(K)$};

    \node at (0.9, 0.80192379) {$\textcolor{blue}{F}$};
    \end{tikzpicture}
    \caption{An element $K$ with one hanging node (red dot), faces $e_1(K)$, $e_2(K)$, $e_3(K)$ and interface $F$ (blue line), and interfaces $F$, $e_2(K)$, $e_3(K)$ and $e_1(K)\setminus F$.}
    \label{fig:faces}
\end{figure}
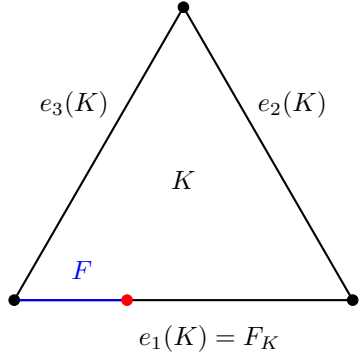
We denote the number of interfaces of an element $ K\in\Ta$ by
    $m_K := \operatorname{card}\{F\in\Fa: \text{ }F\subset\partial K\} $.
  If $K$ has no hanging nodes, then $m_K=d+1$.

For $\vec{k} = (k_K: \text{ }K\in\Ta)\in(\mathbb{R}_0^+)^{\operatorname{card}(\Ta)}$, we define the broken Sobolev spaces
\begin{align*}
    W^{\vec{k},p}(\Omega;\Ta) &:= \{v\in L^p(\Omega): \text{ } v|_K\in W^{k_K,p}(K), \text{ } K\in\Ta\}, \\
    W^{\vec{k},p}(\Omega;\Ta;\mathbb{R}^d) &:= \{\vec{v}\in L^p(\Omega;\mathbb{R}^d): \text{ } \vec{v}|_K\in W^{k_K,p}(K;\mathbb{R}^d), \text{ } K\in\Ta\}.
\end{align*}
For $k\geq0$, if $k_K = k$ for all $K\in\Ta$, then we set $W^{k,p}(\Omega;\Ta):=W^{\vec{k},p}(\Omega;\Ta)$.

Also, for $v\in W^{1,p}(\Omega;\Ta)$, $\nabla_h v$ the denotes the broken gradient of $v$, defined element-wise by $(\nabla_h v)|_K = \nabla(v|_K)$, for all $K\in\Ta$. Thus, if $v\in W^{1,p}(\Omega)$, then $\nabla_h v = \nabla v$.

To $\Ta$, we associate the vector $\vec{r} = (r_K: \text{ }K\in\Ta)\in\mathbb{N}^{\operatorname{card}(\Ta)}$, consisting of the local elemental polynomial degrees. Let also $\Pa_r(K)$ and $\Pa_r(K;\mathbb{R}^d)$ denote the spaces of scalar- and vector-valued polynomials of total degree at most $r$ in $K$, $r\in\mathbb{N}_0$, respectively. We also define the discrete space on which we shall be seeking the approximate solution by
\begin{equation*}
    S^{\vec{r}}_\Ta := \{v\in L^p(\Omega): \text{ } v|_K\in \Pa_{r_K}(K), \text{ } K\in\Ta\}, 
\end{equation*}
observing that $S^{\vec{r}}_\Ta\subset W^{1,p}(\Omega;\Ta)$.

Now, let $F\in\Fa$ be an interior interface of $\Ta$ shared by two elements $K^+$ and $K^-$. We denote by $v^\pm$ and $\vec{v}^\pm$, the traces of $v$ and $\vec{v}$ on $\partial K^\pm$, for $v$ and $\vec{v}$ element-wise continuous scalar- and vector-valued functions, respectively. We also consider an interface-wise constant vector field $\vec{w}:\Gamma\to[0,1]^2$, with $\vec{w}|_F = (w_F^+,w_F^-)$ and $w_F^+ + w_F^- = 1$. Furthermore, we denote by $\vec{n}^\pm$ the outward unit normal of $\partial K^\pm$. If $F$ is a boundary face, we set $K^+\in\Ta$ to be the boundary element that has the face $F$, i.e., $F=K^+\cap\partial\Omega$ and set $K^-=K^+$, $\vec{w}|_F=(1,0)$. We also denote by $v^+$ and $\vec{v}^+$, the traces of $v$ and $\vec{v}$ respectively on $\partial K^+$, by $\vec{n}^+$ the outward unit normal of $\partial K^+$, $\vec{n}^-=-\vec{n}^+$ and we set $v^- = 0$, $\vec{v}^- = \vec{0}$. Then, we define the jumps and weighted averages of $v$ and $\vec{v}$ on $F$, by
\begin{align*}
    \llbracket v \rrbracket|_F =\ & v^+\vec{n}^+ + v^-\vec{n}^-, \text{ } \llbracket \vec{v} \rrbracket|_F =  \vec{v}^+\cdot\vec{n}^+ + \vec{v}^-\cdot\vec{n}^-, \\
    \lbrac v \rbrac_{\vec{w}}|_F =\ & w_F^+v^+ + w_F^-v^-, \text{ } \lbrac \vec{v} \rbrac_{\vec{w}}|_F =  w_F^+\vec{v}^+ + w_F^-\vec{v}^-.
\end{align*}
Furthermore, for $F\in\Fa$, we denote by $\Ta_F = \{K\in\Ta: \text{ } F\subset\partial K\}$ and we observe that $\operatorname{card}\Ta_F=2$ if $F$ is an interior interface and $\operatorname{card}\Ta_F=1$, otherwise.

\subsection{Algebraic lemmas.}\label{prelim_algebra}

We collect some useful algebraic lemmas used below. 
\begin{lemma}[\cite{Barrett1993}, Lemma 2.1, \cite{diening2007}, Lemma 2.1]\label{th_delDG}
    For all $p\in(1,\infty)$, there exist positive constants $C_{1,p}$ and $C_{2,p}$, depending only on $p$, such that for all $\vec{y},\vec{z}\in\mathbb{R}^d$ and $a\geq0$,
\begin{align}
\left| (a^2+|\vec{y}|^2)^{\frac{p-2}{2}}\vec{y} - (a^2+|\vec{z}|^2)^{\frac{p-2}{2}}\vec{z} \right| &\leq C_{1,p}(a + |\vec{y}| + |\vec{z}|)^{p-2}|\vec{y} - \vec{z}|, \label{del1DG}\\
\left( (a^2+|\vec{y}|^2)^{\frac{p-2}{2}}\vec{y} - (a^2+|\vec{z}|^2)^{\frac{p-2}{2}}\vec{z} \right)\cdot\left( \vec{y} - \vec{z} \right) &\geq C_{2,p}(a + |\vec{y}| + |\vec{z}|)^{p-2}|\vec{y} - \vec{z}|^2. \label{del2DG}
\end{align}
\end{lemma}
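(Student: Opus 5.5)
We prove both inequalities by writing the difference as an integral along the segment joining $\vec{z}$ to $\vec{y}$. Set $\vec{A}(\vec{x}) := (a^2+|\vec{x}|^2)^{\frac{p-2}{2}}\vec{x}$ and $\vec{x}_t := \vec{z}+t(\vec{y}-\vec{z})$ for $t\in[0,1]$. If $a>0$, or if the segment avoids the origin when $p<2$, then $\vec{A}$ is $C^1$ along the segment and
\begin{equation*}
\vec{A}(\vec{y})-\vec{A}(\vec{z}) = \int_0^1 D\vec{A}(\vec{x}_t)\,(\vec{y}-\vec{z})\d t,
\qquad
D\vec{A}(\vec{x}) = (a^2+|\vec{x}|^2)^{\frac{p-2}{2}}\Big(I + (p-2)\frac{\vec{x}\otimes\vec{x}}{a^2+|\vec{x}|^2}\Big).
\end{equation*}
The matrix $D\vec{A}(\vec{x})$ is symmetric. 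Its eigenvalues are $(a^2+|\vec{x}|^2)^{\frac{p-2}{2}}$ and $(a^2+|\vec{x}|^2)^{\frac{p-2}{2}}\big(1+(p-2)|\vec{x}|^2/(a^2+|\vec{x}|^2)\big)$. Hence they lie between $\min(1,p-1)(a^2+|\vec{x}|^2)^{\frac{p-2}{2}}$ and $\max(1,p-1)(a^2+|\vec{x}|^2)^{\frac{p-2}{2}}$.

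From the eigenvalue bounds,
\begin{align*}
|\vec{A}(\vec{y})-\vec{A}(\vec{z})| &\le \max(1,p-1)\,I_p\,|\vec{y}-\vec{z}|,\\
(\vec{A}(\vec{y})-\vec{A}(\vec{z}))\cdot(\vec{y}-\vec{z}) &\ge \min(1,p-1)\,I_p\,|\vec{y}-\vec{z}|^2,
\end{align*}
where
\begin{equation*}
I_p := \int_0^1 (a^2+|\vec{x}_t|^2)^{\frac{p-2}{2}}\d t .
\end{equation*}
This reduces both \eqref{del1DG} and \eqref{del2DG} to the two-sided equivalence
\begin{equation*}
I_p \simeq (a+|\vec{y}|+|\vec{z}|)^{p-2}.
\end{equation*}
Here the upper bound is needed for \eqref{del1DG} and the lower bound for \eqref{del2DG}. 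The equivalence constants depend only on $p$.

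We first use $(a^2+s^2)^{1/2}\simeq a+s$ to replace the integrand by $(a+|\vec{x}_t|)^{p-2}$. Suppose $p\ge2$. The upper bound is immediate, since $|\vec{x}_t|\le|\vec{y}|+|\vec{z}|$. For the lower bound, assume without loss of generality $|\vec{y}|\ge|\vec{z}|$. For $t\in[3/4,1]$ we have
\begin{equation*}
|\vec{x}_t|\ge|\vec{y}|-\tfrac14|\vec{y}-\vec{z}|\ge|\vec{y}|-\tfrac12|\vec{y}|=\tfrac12|\vec{y}|\ge\tfrac14(|\vec{y}|+|\vec{z}|).
\end{equation*}
Integrating over $[3/4,1]$ then gives $I_p \gtrsim (a+|\vec{y}|+|\vec{z}|)^{p-2}$. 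For $1<p<2$ the roles swap. The lower bound is now immediate, since the exponent is negative and $|\vec{x}_t|\le|\vec{y}|+|\vec{z}|$.

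The upper bound for $1<p<2$ is the main obstacle. We must bound $\int_0^1 (a+|\vec{x}_t|)^{p-2}\d t$ even though $|\vec{x}_t|$ may vanish on the segment. We drop $a$ and estimate $|\vec{x}_t|\ge \operatorname{dist}$ to the closest point of the line. Writing $|\vec{x}_t|\ge |t-t_0|\,|\vec{y}-\vec{z}|$, the integral is bounded by
\begin{equation*}
|\vec{y}-\vec{z}|^{p-2}\int_0^1|t-t_0|^{p-2}\d t\le \frac{2}{p-1}\,|\vec{y}-\vec{z}|^{p-2},
\end{equation*}
which is finite because $p-2>-1$. This suffices when $|\vec{y}-\vec{z}|\gtrsim |\vec{y}|+|\vec{z}|$. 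Otherwise $|\vec{y}-\vec{z}|\le \tfrac12|\vec{y}|$, say, and then $|\vec{x}_t|\ge\tfrac12|\vec{y}|$ on the whole segment, so the integrand is bounded directly.

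Finally we remove the smoothness restriction. When $a=0$ and the segment passes through the origin (relevant for $p<2$), we either approximate with $a\downarrow0$, using continuity of both sides in $a$, or perturb $\vec{z}$ slightly and pass to the limit, since both sides are continuous. The constants obtained depend only on $p$, which gives $C_{1,p}$ and $C_{2,p}$. For the paper's range $p>2$ only the simpler case is needed.
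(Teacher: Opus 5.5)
The paper gives no proof of this lemma; it is quoted from Barrett--Liu and Diening et al. Your route is the standard argument behind those references:
\begin{itemize}
\item write $\vec{A}(\vec{y})-\vec{A}(\vec{z})=\int_0^1 D\vec{A}(\vec{x}_t)(\vec{y}-\vec{z})\d t$;
\item bound the spectrum of $D\vec{A}(\vec{x})$ between $\min(1,p-1)$ and $\max(1,p-1)$ times $(a^2+|\vec{x}|^2)^{\frac{p-2}{2}}$;
\item reduce both inequalities to $I_p\simeq(a+|\vec{y}|+|\vec{z}|)^{p-2}$.
\end{itemize}
For $p\ge 2$, the only range the paper uses, your proof is complete. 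The upper bound on $I_p$ is trivial, and restricting to $t\in[3/4,1]$ gives the lower bound correctly. Here $\vec{A}$ is $C^1$ even when $a=0$, with $D\vec{A}(\vec{0})=0$ for $p>2$.

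There is a genuine, though easily repaired, gap for $1<p<2$, in the upper bound on $I_p$ that \eqref{del1DG} needs. You drop $a$ and conclude $I_p\lesssim(|\vec{y}|+|\vec{z}|)^{p-2}$. Because $p-2<0$, this is weaker than the required $I_p\lesssim(a+|\vec{y}|+|\vec{z}|)^{p-2}$ whenever $a\gg|\vec{y}|+|\vec{z}|$. For example, with $\vec{z}=\vec{0}$ and $|\vec{y}|=1$, your bound is $2/(p-1)$ for every $a$, while the target $(a+1)^{p-2}$ tends to $0$ as $a\to\infty$. So the chain of inequalities as written does not prove \eqref{del1DG} in that regime.

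The fix is to keep $a$ and use $(a+|\vec{x}_t|)^{p-2}\le\min\{a^{p-2},|\vec{x}_t|^{p-2}\}$. Your two subcases then give
\[
I_p\lesssim\min\{a^{p-2},(|\vec{y}|+|\vec{z}|)^{p-2}\}=\max\{a,|\vec{y}|+|\vec{z}|\}^{p-2}\le 2^{2-p}(a+|\vec{y}|+|\vec{z}|)^{p-2}.
\]
Equivalently, split into two cases. If $a\ge|\vec{y}|+|\vec{z}|$, the integrand is at most $a^{p-2}$. If $a<|\vec{y}|+|\vec{z}|$, then $(|\vec{y}|+|\vec{z}|)^{p-2}\le 2^{2-p}(a+|\vec{y}|+|\vec{z}|)^{p-2}$ and your argument applies verbatim. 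In your second subcase, likewise keep $a$ and bound the integrand by $(a+\tfrac12|\vec{y}|)^{p-2}$. Once this bound is uniform in $a$, your limiting argument $a\downarrow 0$ for the non-smooth case goes through.
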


Furthermore, we collect the following generalized Young inequalities.
\begin{lemma}\label{th_young}
Let $\varepsilon>0$, $\alpha,\beta_1,\beta_2\geq 0,$ and $p,p' \in (1, \infty),$ with $1/p + 1/p'=1.$ Then, with $\mu_\varepsilon = \max\{1,(4\varepsilon)^{-1}\},$ we have,
 \begin{align} & \beta_1 \beta_2 \leq \frac{\varepsilon^p}{p} \beta^p_1 + \frac{1}{p' \varepsilon^{p'}} \beta^{p'}_2 \text{ for all $p \in (1, \infty)$},
 \label{youngoriginal} \\
 &  (\alpha+\beta_1)^{p-2}\beta_1\beta_2 \leq \mu_\varepsilon(\alpha+\beta_1)^{p-2}\beta_1^2 + \varepsilon(\alpha+\beta_2)^{p-2}\beta_2^2 \text{ for all $p \in [2, \infty)$.} \label{young}
\end{align}
Moreover, for $\gamma>0$, we have
\begin{equation}\label{genYoung2}
    (\alpha+\beta_1)^{p-2}\beta_1\beta_2 \leq \gamma^{-1}\mu_\varepsilon(\alpha+\beta_1)^{p-2}\beta_1^2 + \gamma\varepsilon(\alpha+\gamma\beta_2)^{p-2}\beta_2^2.
\end{equation}
\end{lemma}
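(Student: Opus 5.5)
The three inequalities are elementary, and I would prove them in the order stated, obtaining \eqref{genYoung2} from \eqref{young} by rescaling.

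\emph{Proof of \eqref{youngoriginal}.} I would use the weighted arithmetic--geometric mean inequality $ab\le a^p/p+b^{p'}/p'$ for $a,b\ge0$. This follows from the concavity of $\log$: $\log(\tfrac1p a^p+\tfrac1{p'}b^{p'})\ge \tfrac1p\log a^p+\tfrac1{p'}\log b^{p'}$, with the case $ab=0$ trivial. Applying it with $a=\varepsilon\beta_1$ and $b=\beta_2/\varepsilon$ gives \eqref{youngoriginal} at once.

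\emph{Proof of \eqref{young}.} Let $p\ge2$. I would split into two cases according to the relative size of $\beta_1$ and $\beta_2$.

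If $\beta_2\le\beta_1$, then $\beta_1\beta_2\le\beta_1^2$. Since $\mu_\varepsilon\ge1$, the left-hand side is bounded by $\mu_\varepsilon(\alpha+\beta_1)^{p-2}\beta_1^2$. The second term on the right is non-negative, so the inequality holds.

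If $\beta_2>\beta_1$, I would first apply \eqref{youngoriginal} with $p=2$ and a rescaled $\varepsilon$, i.e.\ $\beta_1\beta_2\le (4\varepsilon)^{-1}\beta_1^2+\varepsilon\beta_2^2$. Multiplying by $(\alpha+\beta_1)^{p-2}$ gives
\begin{equation*}
(\alpha+\beta_1)^{p-2}\beta_1\beta_2\le (4\varepsilon)^{-1}(\alpha+\beta_1)^{p-2}\beta_1^2+\varepsilon(\alpha+\beta_1)^{p-2}\beta_2^2 .
\end{equation*}
In the first term I use $(4\varepsilon)^{-1}\le\mu_\varepsilon$. In the second term I use that $t\mapsto(\alpha+t)^{p-2}$ is non-decreasing for $p\ge2$, together with $\beta_1<\beta_2$, to replace $(\alpha+\beta_1)^{p-2}$ by $(\alpha+\beta_2)^{p-2}$.

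The only point needing care is this monotonicity step, which is exactly where $p\ge2$ is required. This is why the case split is made: the monotonicity is only usable when $\beta_1\le\beta_2$, and the complementary case is absorbed by $\mu_\varepsilon\ge1$.

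\emph{Proof of \eqref{genYoung2}.} I would apply \eqref{young} with $\beta_2$ replaced by $\gamma\beta_2\ge0$:
\begin{equation*}
\gamma(\alpha+\beta_1)^{p-2}\beta_1\beta_2\le \mu_\varepsilon(\alpha+\beta_1)^{p-2}\beta_1^2+\varepsilon\gamma^2(\alpha+\gamma\beta_2)^{p-2}\beta_2^2 .
\end{equation*}
Dividing by $\gamma>0$ yields \eqref{genYoung2}.
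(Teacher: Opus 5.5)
Your proof is correct and follows the paper's route: \eqref{youngoriginal} is treated as standard, and \eqref{genYoung2} is obtained exactly as in the paper, by applying \eqref{young} with $\gamma\beta_2$ in place of $\beta_2$ and dividing by $\gamma$. The one difference is that the paper defers \eqref{young} to an external reference (stated there only for $p\in(2,\infty)$), whereas your case split into $\beta_2\le\beta_1$ and $\beta_2>\beta_1$ gives a self-contained proof that also covers $p=2$ and explains the form $\mu_\varepsilon=\max\{1,(4\varepsilon)^{-1}\}$.
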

\begin{proof}  
The proof of \eqref{youngoriginal} is standard, while the proof of \eqref{young} can be found in \cite{ChPa2024} for $p\in(2,\infty)$. For the proof of \eqref{genYoung2}, we apply \eqref{young} with $\gamma\beta_2$ in place of $\beta_2$. 
\end{proof}
\begin{lemma}[\cite{lynm2001}, Lemma 2.4]\label{le_tri_vec}
For  $p\in(1,\infty)$, $\vec{y},\vec{z}\in\mathbb{R}^d$ and $\lambda\geq0$, it holds,
\begin{equation}\label{tri_vec}
    (\lambda + |\vec{y} + \vec{z}|)^{p-2}|\vec{y} + \vec{z}|^2 \leq \max\{2,2^{p-1}\}\left((\lambda + |\vec{y}|)^{p-2}|\vec{y}|^2+ (\lambda + |\vec{z}|)^{p-2}|\vec{z}|^2\right).
\end{equation}
\end{lemma}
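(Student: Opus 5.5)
The plan is to reduce \eqref{tri_vec} to monotonicity of the scalar function $\phi(t) := (\lambda+t)^{p-2}t^2$ on $[0,\infty)$ and then to treat $p\ge 2$ and $p\in(1,2)$ separately. First, $\phi$ is nondecreasing for every $p\in(1,\infty)$. Indeed,
\[
\phi'(t) = (\lambda+t)^{p-3}t\,\big(2(\lambda+t)+(p-2)t\big) = (\lambda+t)^{p-3}t\,\big(2\lambda + p t\big) \ge 0 .
\]
By the triangle inequality, $|\vec{y}+\vec{z}|\le |\vec{y}|+|\vec{z}| \le 2\max\{|\vec{y}|,|\vec{z}|\}$. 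Without loss of generality assume $|\vec{y}|\ge|\vec{z}|$, and set $s:=|\vec{y}|$. Monotonicity then gives
\[
\phi(|\vec{y}+\vec{z}|)\le \phi(2s).
\]
It therefore suffices to show $\phi(2s)\le \max\{2,2^{p-1}\}\,\phi(s)$, because $\phi(s)\le \phi(|\vec{y}|)+\phi(|\vec{z}|)$ trivially.

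\emph{Case $p\ge2$.} Here $\lambda+2s\le 2(\lambda+s)$ and $p-2\ge0$, so
\[
\phi(2s) = (\lambda+2s)^{p-2}\,4s^2 \le 2^{p-2}(\lambda+s)^{p-2}\,4s^2 = 2^{p}\phi(s).
\]
This yields the constant $2^p$, not $2^{p-1}$. To gain the missing factor $2$, I would not pass through $2\max$. Instead I would use $|\vec{y}|+|\vec{z}|$ directly together with convexity. For $p\ge2$ the function $\phi$ is convex, since it is a product of nonnegative, nondecreasing, convex functions. Convexity gives
\[
\phi\!\left(\tfrac{a+b}{2}\right)\le \tfrac12\big(\phi(a)+\phi(b)\big).
\]
Combined with the scaling bound $\phi(2t)\le 2^{p}\phi(t)$, this gives
\[
\phi(a+b)\le 2^{p}\phi\!\left(\tfrac{a+b}{2}\right)\le 2^{p-1}\big(\phi(a)+\phi(b)\big).
\]
Take $a=|\vec{y}|$ and $b=|\vec{z}|$, and apply monotonicity once more.

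\emph{Case $1<p<2$.} Now $(\lambda+2s)^{p-2}\le(\lambda+s)^{p-2}$, because the exponent is negative. Hence $\phi(2s)\le 4\phi(s)$. The sharper constant $2$ should again come from splitting: I expect $\phi(t)/t$ to be nondecreasing, since $t(\lambda+t)^{p-2}$ is increasing for $p>1$. That makes $\phi$ superadditive-type in the wrong direction, so I would argue instead as follows. Write
\[
\phi(a+b)=(a+b)\,(\lambda+a+b)^{p-2}(a+b) = a\,g(a+b)+b\,g(a+b), \qquad g(t)=(\lambda+t)^{p-2}t .
\]
Then bound each term: $a\,g(a+b)\le a\,g(2\max\{a,b\})$, and use $g(2t)\le 2g(t)$ (valid because $p-2<0$).

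The main obstacle is exactly this sharp-constant bookkeeping in the subquadratic case. When $a<b$, the term $a\,g(2b)$ must be absorbed into $\phi(b)$ via $a\le b$; this gives
\[
a\,g(a+b)+b\,g(a+b)\le 2b\,g(2b)\le 4\phi(b).
\]
If the factor $2$ cannot be recovered this way, I would fall back on the cited source \cite{lynm2001}. The convexity argument for $p\ge2$ is the step I am confident in.
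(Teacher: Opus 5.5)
Your argument for $p\ge 2$ reaches the right constant, but there is a genuine gap in the case $1<p<2$. The splitting with $g(t)=(\lambda+t)^{p-2}t$ only gives $\phi(|\vec{y}|+|\vec{z}|)\le 4\,\phi(\max\{|\vec{y}|,|\vec{z}|\})$. That is the constant $4$, not $\max\{2,2^{p-1}\}=2$. Falling back on \cite{lynm2001} does not close this: the paper offers no argument of its own, only that citation. Separately, your justification of convexity for $p\ge 2$ fails when $2<p<3$. There the factor $(\lambda+t)^{p-2}$ is concave, so $\phi$ has not been written as a product of convex functions.

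Both problems go away if you check convexity of $\phi(t)=(\lambda+t)^{p-2}t^2$ directly. For $t>0$,
\[
\phi''(t)=(\lambda+t)^{p-4}\big(2\lambda^2+4(p-1)\lambda t+p(p-1)t^2\big)\ge 0
\]
for every $p>1$, with $\phi(t)=t^p$ if $\lambda=0$. So $\phi$ is convex on $[0,\infty)$ for all $p\in(1,\infty)$. The mechanism you already used for $p\ge 2$ then works uniformly. The scaling bound is $\phi(2t)\le 2^{\max\{2,p\}}\phi(t)$; for $1<p<2$ this holds because $(\lambda+2t)^{p-2}\le(\lambda+t)^{p-2}$. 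Midpoint convexity then halves the constant:
\[
\phi(a+b)\le 2^{\max\{2,p\}}\,\phi\big(\tfrac{a+b}{2}\big)\le 2^{\max\{1,p-1\}}\big(\phi(a)+\phi(b)\big)=\max\{2,2^{p-1}\}\big(\phi(a)+\phi(b)\big).
\]
Combine this with monotonicity of $\phi$ and $|\vec{y}+\vec{z}|\le|\vec{y}|+|\vec{z}|$ to get \eqref{tri_vec} for all $p\in(1,\infty)$. The subquadratic case then needs no separate bookkeeping.
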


\section{Robust interior penalty discontinuous Galerkin method}\label{DG_method}
We now are in position to define the robust interior penalty discontinuous Galerkin (IPDG) method on simplicial meshes; its extension to curved, polytopic element meshes will be given in Section \ref{sec_DGpoly}. 

To that end, we begin by defining the nonlinear form $B_\Ta:[W^{1+1/p,p}(\Omega;\Ta)]^3\to\mathbb{R}$, by
\begin{align} \label{BF}
    &B_\Ta(z;u,v) = \int_\Omega |\nabla_h u|^{p-2}\nabla_h u\cdot\nabla_h v \d \vec{x} + \int_\Gamma \sigma\lbrac \left(|\nabla z|^2 + \sigma^2|\llbracket u \rrbracket|^2\right)^{\frac{p-2}{2}}\rbrac_\vec{w} \llbracket u \rrbracket \cdot \llbracket v \rrbracket \d s \notag \\ 
    &\qquad - \int_\Gamma \lbrac |\nabla u|^{p-2}\nabla u\rbrac_\vec{w}\cdot \llbracket v \rrbracket \d s + \theta \int_{\Gamma} \lbrac \left(|\nabla z|^2 + \sigma^2|\llbracket u \rrbracket|^2\right)^{\frac{p-2}{2}}\nabla v\rbrac_\vec{w}\cdot \llbracket u \rrbracket \d s,
\end{align}
for all $z,u,v\in W^{1+1/p,p}(\Omega;\Ta)$, with $\theta\in[-1,1]$, $\sigma:\Gamma\to\mathbb{R}^+$ is the discontinuity-penalization (penalty) parameter, which is assumed to be interface-wise constant, that is, $\sigma|_F = \sigma_F\in\mathbb{R}^+$, for $F\in\Fa$. The values of $\sigma_F$ will be specified in the analysis of the method, as will be the weights $\vec{w}$ of the averages that appear in the integrals on $\Gamma$.
Then, the robust IPDG method reads: find $u_h\in S_\Ta^\vec{r}$, such that 
\begin{equation}\label{IPDG}
   B_\Ta(u_h;u_h,v_h) = \int_\Omega fv_h \d \vec{x} \quad\forall v_h\in S_\Ta^\vec{r}.
\end{equation}
The choices $\theta=-1$, $\theta=0$ and $\theta=1$ correspond to the symmetric, incomplete and non-symmetric IPDG methods, respectively; the terminology stems from the linear elliptic equation with $p=2$.  

The above method is a variant of the IPDG method proposed and analyzed in \cite{Ruz2018}, with some important departures. In particular, the non-differentiable terms $(|\nabla u_h| + h^{-1}|\llbracket u_h \rrbracket|)^{p-2}\llbracket u_h \rrbracket$ that appear in some of the skeleton integrals on the respective scheme in \cite{Ruz2018}, are replaced by $(|\nabla u_h|^2 + \sigma^2|\llbracket u_h \rrbracket|^2)^{\frac{p-2}{2}}\llbracket u_h \rrbracket$ here. The reason for this is two-fold: first, it renders the implementation with Newton-type non-linear solvers less sensitive to degeneracies and, secondly, it appears to be ``dimensionally'' correct with respect to the mesh parameters $h$ and $\vec{r}$, as it will transpire in the analysis below. Furthermore, \eqref{IPDG} involves weighted averages in the integrals on the skeleton $\Gamma$, in the spirit of the respective robust IPDG method proposed in \cite{DongGeo2022} for a linear elliptic problem. The choice of weighted averages prevents potential over-penalization, when two elements sharing a face have significantly different local polynomial degrees and/or diameters, cf. \cite{DongGeo2022} for details. The respective terms in the IPDG method in \cite{Ruz2018} correspond to $\mathbf{w}=(1/2,1/2)$ globally on the interior skeleton. Finally, here we consider the $hp$-version framework, thereby, we will be tracking the dependence on the local polynomial degree in $\mathbf{w}$ and in $\sigma$.

One can easily check that \eqref{IPDG} is consistent, in the sense that if the solution $u$ of \eqref{BVP} lies in $W^{1,p}_0(\Omega)\cap W^{1+1/p,p}(\Omega;\Ta)$, it holds
\begin{equation}\label{consistencyDG}
    B_\Ta(u_h;u,v_h) = \int_\Omega fv_h \d \vec{x} \quad\forall v_h\in S_\Ta^\vec{r}.
\end{equation}

We will carry out the error analysis of \eqref{IPDG}, with respect to the quasi-norm 
\begin{equation}\label{DGQN}
    \trn u \trn_{(\Ta;p,v)} := \bigg(\int_\Omega (|\nabla_h v| + |\nabla_h u|)^{p-2}|\nabla_h u|^2\d \vec{x} + \int_\Gamma\sigma\lbrac (|\nabla v| + \sigma|\llbracket u \rrbracket|)^{p-2}\rbrac_\vec{w}|\llbracket u \rrbracket|^2 \d s\bigg)^{\frac{1}{2}},
\end{equation}
for all $u\in W^{1,p}(\Omega;\Ta)$ and $v\in W^{1+1/p,p}(\Omega;\Ta)$, where $v$ will be referred to as the weight of the quasi-norm. This is, indeed, a quasi-norm as it satisfies a weaker version of the triangle inequality with a multiplicative constant. Also, for $q\in[1,\infty)$, we define the broken Sobolev norm $\trn\cdot\trn_q$ by
\begin{equation}\label{dgnorm}
    \trn u \trn_q = \left(\int_\Omega |\nabla_h u|^q\d \vec{x} + \int_\Gamma\sigma^{q-1}|\llbracket u \rrbracket|^q \d s\right)^{1/q} \text{ for all } u\in W^{1,q}(\Omega;\Ta),
\end{equation}
and we observe that for $q=2$, we obtain the standard broken norm of $H^1(\Omega;\Ta)=W^{1,2}(\Omega;\Ta)$.

\section{Inverse estimates and polynomial approximation}\label{DG_inverse}
\subsection{Inverse estimates.}\label{sub_DGinv}
We prove some new polynomial inverse estimates, that will be crucial for the definition of the so-called discontinuity-penalization parameter and for the stability and error analysis of the robust IPDG method.  

We begin by recalling the classical Markov's inequality stating
that 
\begin{equation}\label{Markov}
        \|\hat v'\|_{L^\infty(0,1)} \leq 2 r^2\|\hat v\|_{L^\infty(0,1)} \quad\forall \hat v\in\Pa_r(0,1),
    \end{equation}
    for all $r\in\mathbb{N}_0$; see, e.g., 
 \cite{BeDaMa07,Schwab} for a proof. This will be used in the proof of following technical lemma.
\begin{lemma}\label{th_infinv}
    For all $r\in\mathbb{N}_0$, $\hat v\in\Pa_r(0,1)$, there exists an interval $\hat I = \hat I(r,\hat v)\subset[0,1]$, such that
    \begin{equation}\label{infinv}
        |\hat I| \geq \frac{1}{2(1 + 2 r^2)} \text{ and } \inf_{\hat x \in \hat I}|\hat v(\hat x)| \geq \frac{1}{2}\|\hat v\|_{L^\infty(0,1)}.
    \end{equation}
\end{lemma}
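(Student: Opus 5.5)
The plan is to take a point where $|\hat v|$ attains its maximum and use Markov's inequality \eqref{Markov} to control how quickly $|\hat v|$ can decrease away from it. If $\hat v\equiv 0$, then $\|\hat v\|_{L^\infty(0,1)}=0$ and the statement holds with $\hat I=[0,1]$. Otherwise, set $M:=\|\hat v\|_{L^\infty(0,1)}>0$. By continuity there is a point $\hat x_0\in[0,1]$ with $|\hat v(\hat x_0)|=M$.

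For any $\hat x\in[0,1]$, the mean value theorem and \eqref{Markov} give
\begin{equation*}
|\hat v(\hat x)|\geq |\hat v(\hat x_0)|-|\hat v(\hat x)-\hat v(\hat x_0)|\geq M-\|\hat v'\|_{L^\infty(0,1)}|\hat x-\hat x_0|\geq M\left(1-2r^2|\hat x-\hat x_0|\right).
\end{equation*}
Hence $|\hat v(\hat x)|\geq M/2$ whenever $|\hat x-\hat x_0|\leq \delta:=1/(4r^2)$ for $r\geq1$. When $r=0$, $\hat v$ is constant, so $\hat I=[0,1]$ works directly.

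Next, define $\hat I:=[\hat x_0-\delta,\hat x_0+\delta]\cap[0,1]$. This is an interval containing $\hat x_0$ whose length is at least $\min\{\delta,1\}$, because at least one side of $\hat x_0$ inside $[0,1]$ has length at least $\min\{\delta,1/2\}$. In fact it is enough to take one-sided length $\min\{\delta,1/2\}$. It then remains to check that
\begin{equation*}
\min\{1/(4r^2),1/2\}\geq \frac{1}{2(1+2r^2)}.
\end{equation*}
This holds: $4r^2\leq 2+4r^2$ gives the first comparison, and $1/2\geq 1/(2(1+2r^2))$ gives the second. On $\hat I$ we then have $\inf_{\hat I}|\hat v|\geq M/2$, which is \eqref{infinv}.

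The only slightly delicate point is the boundary effect when $\hat x_0$ lies near $0$ or $1$. Taking the one-sided half-neighbourhood handles it, since the target bound $1/(2(1+2r^2))$ is weaker than $\min\{1/(4r^2),1/2\}$. The $r=0$ and $\hat v\equiv0$ cases are trivial. I therefore do not expect a genuine obstacle: the whole argument is Markov's inequality combined with the mean value theorem.
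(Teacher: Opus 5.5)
Your proof is correct and follows essentially the paper's argument: take a maximizer $\hat x_0$, use the mean value theorem together with Markov's inequality to keep $|\hat v|\ge\frac12\|\hat v\|_{L^\infty(0,1)}$ on $[\hat x_0-\delta,\hat x_0+\delta]\cap[0,1]$, and observe that this truncated interval has length at least $\delta$ when $\delta\le 1/2$. The only difference is cosmetic: you use the larger radius $\delta=1/(4r^2)$ and treat $r=0$ separately, whereas the paper takes $\delta=1/(2(1+2r^2))$ directly, which also covers $r=0$.
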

\begin{proof}
    Let $\hat x_0\in[0,1]$ be such that $|\hat v(\hat x_0)| = \|\hat v\|_{L^\infty(0,1)}$. For $\delta\in(0,1/2)$, we define the interval $\hat I_\delta = [\hat x_0 - \delta, \hat x_0 + \delta]\cap[0,1]$. We will show that $|\hat I_\delta|\geq \delta$. Indeed, if $\hat x_0\leq \delta$, then $\hat x_0 + \delta \leq 2\delta\leq 1$. In this case, $\hat I_\delta = [0,\hat x_0 + \delta]$ and $|\hat I_\delta| = \hat x_0 + \delta \geq \delta$. On the other hand, if $\hat x_0 > \delta$, we have two cases: if $\hat x_0 + \delta < 1$, then $\hat I_\delta = [\hat x_0 - \delta, \hat x_0 + \delta]$, and hence $|\hat I_\delta|=2\delta\geq\delta$; if $\hat x_0+\delta\geq1$, then $\hat I_\delta=[\hat x_0-\delta,1]$ and $|\hat I_\delta|=1-\hat x_0+\delta\geq \delta$.

    Now, for all $\hat x\in\hat I_\delta$, it holds
    \begin{align}\label{beforedel}
        |\hat v(\hat x_0)| - |\hat v(\hat x)| &\leq |\hat v(\hat x_0) - \hat v(\hat x)| \leq \|\hat v'\|_{L^\infty(0,1)}|\hat x_0 - \hat x| \leq\delta\|\hat v'\|_{L^\infty(0,1)} \notag\\
        &\leq 2\delta r^2\|\hat v\|_{L^\infty(0,1)} \leq \delta(1 + 2 r^2)|\hat v(\hat x_0)|,
    \end{align}
    where at the fourth step, we applied \eqref{Markov}. We select $\delta=1/(2(1+2 r^2))\leq1/2$ and $\hat I = \hat I_\delta$. Then, we have $|\hat I|\geq\delta=1/(2(1+2 r^2))$ and from \eqref{beforedel}, it holds
    \begin{equation*}
        \frac{1}{2}\|\hat v\|_{L^\infty(0,1)} = \left(1 - \delta(1 + 2 r^2)\right)|\hat v(\hat x_0)| \leq |\hat v(\hat x)| \quad\forall \hat x\in\hat I;
    \end{equation*}
   the proof concludes upon considering the infimum over $\hat x\in \hat I$ on the right-hand side.
\end{proof}
 A $d$-dimensional, $h$-version variant of the above result has been proven in \cite{GrHaSa05}. We have provided complete proof above to showcase the dependence on the polynomial degree $r$. The above result will now be used to prove the following trace-inverse estimate. 
\begin{lemma}\label{th_norm_invest}
    Let $K$ be a $d$-dimensional simplex, $F\subset\partial K$ be one of its faces, $q\in(0,\infty)$ and $r\in\mathbb{N}_0$. Then, for $v\in\Pa_r(K)$, we have 
    \begin{equation}\label{LpFinv}
        \int_F |v|^q \d s \leq 2^{q+1}C_{\inv,d}\frac{(2 r^2 + 1)|F|}{d|K|}\int_K |v|^q \d \vec{x},
    \end{equation}
    whereby $C_{\inv,d}=1,4,8$, for $d=1,2,3$, respectively.
\end{lemma}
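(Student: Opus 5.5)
The proof reduces the trace integral to one-dimensional estimates along segments through $K$ and applies Lemma~\ref{th_infinv} on each of them. Let $\vec{a}$ be the vertex of $K$ opposite $F$. Parametrize $K$ by $\vec{x} = \vec{a} + t(\vec{y}-\vec{a})$ with $\vec{y}\in F$ and $t\in[0,1]$. The Jacobian of this map is $t^{d-1}\,\mathrm{dist}(\vec{a},\operatorname{aff}F)$, and $|K| = |F|\,\mathrm{dist}(\vec{a},\operatorname{aff} F)/d$. Hence
\begin{equation*}
\int_K |v|^q \d\vec{x} = \frac{d|K|}{|F|}\int_F\int_0^1 t^{d-1}|v(\vec{a}+t(\vec{y}-\vec{a}))|^q \d t \d s(\vec{y}).
\end{equation*}
It therefore suffices to prove a one-dimensional inequality, for each fixed $\vec{y}\in F$. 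Set $\hat v(t) := v(\vec{a}+t(\vec{y}-\vec{a}))\in\Pa_r(0,1)$. The aim is
\begin{equation*}
|\hat v(1)|^q \leq 2^{q+1}C_{\inv,d}(2r^2+1)\int_0^1 t^{d-1}|\hat v(t)|^q \d t.
\end{equation*}
Integrating this over $F$ and using the identity above then gives \eqref{LpFinv} exactly, including the factor $1/d$. The argument is pointwise in $\vec{y}$ and never uses the triangle inequality, which explains why $q\in(0,1)$ is allowed.

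For the one-dimensional bound, apply Lemma~\ref{th_infinv} to $\hat v$. It gives an interval $\hat I\subset[0,1]$ with $|\hat I|\geq \delta:=1/(2(1+2r^2))$ on which $|\hat v|\geq \tfrac12\|\hat v\|_{L^\infty(0,1)}\geq \tfrac12|\hat v(1)|$. Then
\begin{equation*}
\int_0^1 t^{d-1}|\hat v|^q\d t \geq 2^{-q}|\hat v(1)|^q\int_{\hat I}t^{d-1}\d t.
\end{equation*}
The only remaining task is a lower bound for $\int_{\hat I}t^{d-1}\d t$ in terms of $\delta$. For $d=1$ the integral equals $|\hat I|\geq\delta$, giving the constant $2^{q}\cdot 2(1+2r^2)=2^{q+1}(2r^2+1)$. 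This matches $C_{\inv,1}=1$.

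The main obstacle is the weight $t^{d-1}$ for $d=2,3$: $\hat I$ could sit near $t=0$, where the weight degenerates. Since $t^{d-1}$ is increasing, the worst case is $\hat I=[0,|\hat I|]$, which gives
\begin{equation*}
\int_{\hat I} t^{d-1}\d t\geq \frac{|\hat I|^d}{d}\geq \frac{\delta^d}{d}.
\end{equation*}
This bound is too weak, since it scales like $r^{-2d}$ instead of $r^{-2}$. To avoid it, I would use the fact that in the proof of Lemma~\ref{th_infinv} the interval can be centred at the point $\hat x_0$ where $|\hat v|$ attains its maximum. Two cases follow.
\begin{itemize}
\item If $\hat x_0\geq 1/2$, then $\hat I\cap[1/4,1]$ has length at least about $\delta/2$ (valid since $\delta\le 1/4$, i.e.\ $r\ge1$; for $r=0$ the bound is trivial). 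There $t^{d-1}\geq 4^{1-d}$, which yields a constant of order $2\cdot 4^{d-1}$.
\item If $\hat x_0<1/2$, rescale: $\hat v$ restricted to $[0,2\hat x_0]$ or $[0,1]$ still has $|\hat v(1)|\leq|\hat v(\hat x_0)|$. Apply Markov's inequality \eqref{Markov} on the subinterval $[\hat x_0/2,1]$ (or on $[1/2,1]$ directly) to locate a high-value interval of length comparable to $\delta$ inside $[1/2,1]$, relative to $\max_{[1/2,1]}|\hat v|\geq|\hat v(1)|$.
\end{itemize}
The simplest way to handle both cases is to apply Lemma~\ref{th_infinv}, after an affine rescaling, to $\hat v$ on $[1/2,1]$ and compare with the value at $t=1$. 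This produces an interval of length at least $\delta/2$ in $[1/2,1]$, where the weight satisfies $t^{d-1}\ge 2^{1-d}$, so the constant becomes $2^{q}\cdot 2\cdot 2^{d-1}\cdot 2(1+2r^2)$.

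Carefully tracking these factors, together with the distinction between the length $|\hat I|$ and the bound $\delta$, should give $C_{\inv,2}=4$ and $C_{\inv,3}=8$.
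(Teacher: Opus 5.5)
Your proof is correct and gives exactly the stated constants, but it takes a different route from the paper.

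\textbf{The paper's route.} It fixes a reference simplex and dissects it. For $d=2$ it splits $\hat F$ into two halves. It sweeps one half by segments of length $1/2$ in the square $[0,1/2]^2$, and the other by sheared segments in a parallelogram (via the unit-Jacobian map $G$). It applies the unweighted bound \eqref{1Dinv} on each segment and adds. For $d=3$ it repeats this on four sub-triangles of $\hat F$, and finally maps affinely.

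\textbf{Your route.} You use collapsed coordinates from the opposite vertex. The factor $|F|/(d|K|)$ then falls out of the change of variables, with no reference element and no dissection. The cost is the weight $t^{d-1}$, which you remove by restricting to $[1/2,1]$.

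Your case analysis on the location of $\hat x_0$ is unnecessary. Applying \eqref{1Dinv} with $[a,b]=[1/2,1]$ and $y=1$ gives directly
\[
|\hat v(1)|^q\le 2^{q+2}(2r^2+1)\int_{1/2}^1|\hat v|^q\d t\le 2^{q+1}\,2^{d}\,(2r^2+1)\int_0^1 t^{d-1}|\hat v|^q\d t .
\]
This is $C_{\inv,d}=2^d$, i.e.\ precisely $4$ and $8$ for $d=2,3$. So the closing ``careful tracking'' is a one-line computation, not an open step.

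\textbf{What each buys.} Your argument is dimension-independent; the paper only sketches $d=3$. It also has slack: cutting at $t=(d-1)/d$ instead of $1/2$ gives the constant $d^d/(d-1)^{d-1}$, which is $27/4<8$ for $d=3$. The paper's route stays entirely with unweighted one-dimensional estimates along families of parallel segments. It reuses that device later, e.g.\ in Lemma \ref{th_pert_stab}.
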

\begin{proof}
    Let us consider the one-dimensional case first. For $\hat v\in\Pa_r(0,1)$ and $\hat y\in[0,1]$, Lemma \ref{th_infinv} shows that there exists an interval $\hat I\subset[0,1]$, such that $|\hat I|\geq 1/(2(2 r^2 + 1))$ and
    \begin{equation*}
        |\hat v(\hat y)|^q \leq \|\hat v\|_{L^\infty(0,1)}^q \leq 2^q\inf_{\hat x\in \hat I}|\hat v(\hat x)|^q \leq \frac{2^q}{|\hat I|}\int_{\hat I}|\hat v|^q \d\hat x \leq 2^{q+1}(2 r^2 + 1)\int_0^1 |\hat v|^q \d\hat x.
    \end{equation*}
    Now, the above inequality together with a standard scaling argument shows that for $a<b\in\mathbb{R}$,
    \begin{equation}\label{1Dinv}
        |v(y)|^q \leq \|v\|_{L^\infty(a,b)}^q \leq \frac{2^{q+1}(2 r^2 + 1)}{b-a}\int_a^b|v|^q \d x \quad\forall v\in\Pa_r(a,b), \text{ } y\in[a,b],
    \end{equation}
    which is the desired estimate for $d=1$.

    For $d=2$, we consider the reference triangle $\hat K = \{(\hat x_1, \hat x_2)\in[0,1]^2: \text{ } \hat x_1 + \hat x_2 \leq 1\}$, and
 let $\hat F = \{0\}\times[0,1]$ be one of its faces. Then, for $\hat v \in \Pa_r(\hat K)$, it holds
    \begin{equation}\label{splitF}
        \int_{\hat F}|\hat v|^q\d \hat s = \int_0^1 |\hat v(0,\hat x_2)|^q \d \hat x_2 = \int_0^{1/2} |\hat v(0,\hat x_2)|^q \d \hat x_2 + \int_{1/2}^1 |\hat v(0,\hat x_2)|^q \d \hat x_2.
    \end{equation}
    For $\hat x_2\in[0,1/2]$, we define the polynomial $w_1^{\hat x_2}\in\Pa_r(0,1/2)$ by $w_1^{\hat x_2}(\hat x_1) = \hat v(\hat x_1,\hat x_2)$, for all $\hat x_1\in[0,1/2]$. In order to bound the first integral on the right-hand side of \eqref{splitF}, we apply \eqref{1Dinv} with $a=0$ and $b=1/2$, to obtain
    \begin{align}\label{F1}
        \int_0^{1/2}&|\hat v(0, \hat x_2)|^q\d \hat x_2 = \int_0^{1/2}|w_1^{\hat x_2}(0)|^q\d \hat x_2 \leq 2^{q+2}(2 r^2+1)\int_0^{1/2}\int_0^{1/2}|w_1^{\hat x_2}(\hat x_1)|^q \d \hat x_1 \d \hat x_2 \notag\\
        &= 2^{q+2}(2 r^2+1)\int_0^{1/2}\int_0^{1/2}|\hat v(\hat x_1, \hat x_2)|^q \d \hat x_1 \d \hat x_2 \leq 2^{q+2}(2 r^2+1)\int_{\hat K}|\hat v|^q \d \hat{\vec{x}},
    \end{align}
    since $[0,1/2]\times[0,1/2]\subset\hat K$. In order to estimate the second integral on the right-hand side of \eqref{splitF}, we consider the rectangle $\hat \Ra = [0,1/2]\times[1/2,1]$ and the parallelogram $\hat \Qa \subset \hat K$, with vertices $(0,1/2)$, $(0,1)$, $(1/2,1/2)$ and $(1/2,0)$. Then, we observe that $\hat \Qa = G(\hat\Ra)$, where $G:\hat\Ra\to\hat\Qa$ is the linear mapping, defined by $    G(\hat x_1, \hat x_2) = (\hat x_1, \hat x_2 - \hat  x_1) \quad \forall (\hat x_1, \hat x_2)\in \hat\Ra$. A standard change of variables shows that, for all $g\in L^1(\hat\Qa)$,
    \begin{equation*}
        \int_{1/2}^1\int_0^{1/2}g(\hat x_1, \hat x_2 - \hat x_1)\d \hat x_1\d \hat x_2 = \int_{\hat \Ra}g\circ G\d \hat{\vec{x}} = \int_{\hat\Qa}g \d \hat{\vec{x}},
    \end{equation*}
    upon observing that $\det JG= 1$. Now, for $\hat x_2\in[1/2,1]$, we consider the polynomial $w_2^{\hat x_2}\in\Pa_r(0,1/2)$ by $w_2^{\hat x_2}(\hat x_1) = \hat v(\hat x_1,\hat x_2 - \hat x_1)$, for all $\hat x_1\in[0,1/2]$. Another application of \eqref{1Dinv} shows
    \begin{align}\label{F2}
        \int_{1/2}^1|\hat v(0, \hat x_2)|^q\d \hat x_2 &= \int_{1/2}^1|w_2^{\hat x_2}(0)|^q\d \hat x_2 \leq 2^{q+2}(2 r^2+1)\int_{1/2}^1\int_0^{1/2}|w_2^{\hat x_2}(\hat x_1)|^q \d \hat x_1 \d \hat x_2 \notag\\
        &= 2^{q+2}(2 r^2+1)\int_{\hat \Qa}|\hat v|^q \d \hat{\vec{x}} \leq 2^{q+2}(2 r^2+1)\int_{\hat K}|\hat v|^q \d \hat{\vec{x}}.
    \end{align}
    Thus, a combination of \eqref{splitF}, \eqref{F1} and \eqref{F2} shows
    that $
        \int_{\hat F}|\hat v|^q \d \hat s \leq 2^{q+3}(2 r^2+1)\int_{\hat K}|\hat v|^q \d \hat{\vec{x}}$, while the final result follows by a standard scaling argument for any two-dimensional simplex $K$, any face $F\subset\partial K$ and any $v\in\Pa_r(K)$, upon selecting the respective affine mapping to map $F$ into $\hat F$. 
        
    The proof for the three-dimensional case is completely analogous, but we present a sketch, for the sake of completeness. We now consider our reference element $\hat K$ to be the tetrahedron with vertices $(0,0,0)$, $(1,0,0)$, $(0,1,0)$ and $(0,0,1)$, and let $\hat F = \{(\hat x_1, \hat x_2, 0)\in[0,1]^3: \text{ } \hat x_1 + \hat x_2 \leq 1\}$ be one of its faces. We partition $\hat F$ into four mutually exclusive triangles $\{\hat F_i\}_{i=1}^4$ with equal areas, such that the partition points are $(0,0,0)$, $(1,0,0)$, $(0,1,0)$, $(1/2,0,0)$, $(1/2,1/2,0)$ and $(0,1/2,0)$. Now, let $\hat K_1=\{(\hat x_1, \hat x_2)\in[0,1/2]^2: \text{ } \hat x_1 + \hat x_2 \leq 1/2\}$ and choose $\hat F_1 = \hat K_1 \times \{0\}$. For $(\hat x_1,\hat x_2)\in\hat K_1$, we consider the function $\hat w^{\hat x_1,\hat x_2}\in\Pa_r(0,1/2)$, defined by $\hat w^{\hat x_1,\hat x_2}(\hat x_3) = \hat v(\hat x_1,\hat x_2,\hat x_3)$, for $\hat x_3\in[0,1/2]$. Then, we have
    \begin{align*}
        \int_{\hat F_1}|\hat v|^q \d \hat s &= \int_{\hat K_1} |\hat v(\hat x_1,\hat x_2,0)|^q \d \hat x_1 \d \hat x_2 = \int_{\hat K_1} |\hat w^{\hat x_1,\hat x_2}(0)|^q \d \hat x_1 \d \hat x_2 \\
        &\leq 2^{q+2}(2r^2 + 1)\int_{\hat K_1}\int_0^{1/2} |\hat w^{\hat x_1,\hat x_2}(\hat x_3)|^q \d \hat x_1 \d \hat x_2 \d \hat x_3 \\
        &= 2^{q+2}(2r^2 + 1)\int_{\hat K_1}\int_0^{1/2} |\hat v(\hat x_1,\hat x_2,\hat x_3)|^q \d \hat x_1 \d \hat x_2 \d \hat x_3\leq 2^{q+2}(2r^2 + 1)\int_{\hat K}|\hat v|^q \d \hat{\vec{x}},
    \end{align*}
    where at the third step, we applied the one-dimensional inverse estimate \eqref{1Dinv}. Similarly, with the help of three-dimensional analogues to the mapping $G$ defined above, we can show that for $i=2,3,4$, that 
       $ \int_{\hat F_i}|\hat v|^q \d \hat s \leq 2^{q+2}(2r^2 + 1)\int_{\hat K}|\hat v|^q \d \hat{\vec{x}}$; the assertion follows from the summation of all the inverse estimates at each $\hat F_i$ and applying a scaling argument.
\end{proof}
\begin{remark}\label{rem_coolinvest}
    For $q=2$, the above inverse estimate reduces to the standard $hp$-version $L^2$-trace inverse estimate. However, for $q \neq 2$ the result appears to be novel, 
    to the best of the authors' knowledge. 
    Note that the result holds also for $q \in (0,1)$, whereby even the $h$-version is highly non-trivial, since $\|\cdot\|_{L^q(K)}$ is not a norm and thus, finite-dimensionality arguments do not apply. This result may be of independent interest. 

\end{remark}

The following quasi-norm-inverse estimate will be crucial for the analysis below. 
\begin{lemma}\label{th_QN_invest}
    Let $p\in(2,\infty)\cap\mathbb{Q}$ and let $k_p, \ell_p\in\mathbb{N}$, such that $p = 2 + 2k_p/\ell_p$. Let also $K$ be a $d$-dimensional simplex, $F\subset\partial K$ be one of its faces, and let $r\in\mathbb{N}$. Then, for $w,v\in\Pa_r(K)$, we have
    \begin{align}\label{QN_invest}
        \int_F&\left( |\nabla w| + |\nabla v| \right)^{p-2}|\nabla v|^2\d s \\
        &\qquad\leq 2^{p/2+1/k_p}C_{\inv,d}\frac{(2p^2k_p^2(r-1)^2+1)|F|}{d|K|}\int_K\left( |\nabla w| + |\nabla v| \right)^{p-2}|\nabla v|^2\d \vec{x}.
    \end{align}
    where $C_{\inv,d}$ is the constant in \eqref{LpFinv}.
\end{lemma}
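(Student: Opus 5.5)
The idea is to reduce \eqref{QN_invest} to the $L^q$ trace-inverse estimate of Lemma~\ref{th_norm_invest} with $q<1$. The integrand is not a polynomial, but up to a fixed constant it equals a fractional power of a polynomial. Set
\[
A:=|\nabla w|^2+|\nabla v|^2 .
\]
Since $w,v\in\Pa_r(K)$, each component of $\nabla w$ and $\nabla v$ lies in $\Pa_{r-1}(K)$, so $A\in\Pa_{2(r-1)}(K)$ and $|\nabla v|^2\in\Pa_{2(r-1)}(K)$. Pointwise we have $A\le(|\nabla w|+|\nabla v|)^2\le 2A$. Because $p-2\ge0$, this gives
\[
A^{\frac{p-2}{2}}|\nabla v|^2\le(|\nabla w|+|\nabla v|)^{p-2}|\nabla v|^2\le 2^{\frac{p-2}{2}}A^{\frac{p-2}{2}}|\nabla v|^2 ,
\]
on $F$ as well as on $K$.

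Next I use rationality of $p$. Write $(p-2)/2=k_p/\ell_p$ and observe that
\[
A^{k_p/\ell_p}|\nabla v|^2=\big(A^{k_p}|\nabla v|^{2\ell_p}\big)^{1/\ell_p}=|P|^{1/\ell_p},\qquad P:=A^{k_p}|\nabla v|^{2\ell_p}\ge0 .
\]
Here $P$ is a genuine polynomial, being a product of powers of the polynomials $A$ and $|\nabla v|^2$. Its degree is at most $2(r-1)(k_p+\ell_p)=p\,\ell_p(r-1)$. Before fixing the representation I will check whether this matches the factor $p\,k_p(r-1)$ in the statement. If it does not, I will either use the freedom of choosing a non-reduced fraction $k_p/\ell_p$ or exchange the roles of the exponents (for instance by writing the integrand as a $1/k_p$-power) so that the degree and the power $2^{1/k_p}$ line up.

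I then apply Lemma~\ref{th_norm_invest} to $P$ with $q=1/\ell_p\in(0,1]$ and polynomial degree $\tilde r=\deg P$. It is precisely the fact that the lemma holds for $q\in(0,1)$, as stressed in Remark~\ref{rem_coolinvest}, that makes this step possible. This yields
\[
\int_F|P|^{q}\d s\le 2^{q+1}C_{\inv,d}\frac{(2\tilde r^2+1)|F|}{d|K|}\int_K|P|^{q}\d\vec{x}.
\]
Chaining the lower comparison on $F$, this estimate, and the upper comparison on $K$ gives \eqref{QN_invest}. The constant is $2^{\frac{p-2}{2}}\cdot2^{q+1}=2^{p/2+q}$, which is of the claimed form $2^{p/2+1/k_p}$ once the exponent bookkeeping above is settled. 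The factor $2\tilde r^2+1$ becomes $2p^2k_p^2(r-1)^2+1$.

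The main obstacle is conceptual rather than computational. One must see that the non-polynomial, non-norm quantity can be sandwiched by $|P|^{1/\ell_p}$ with $P$ a polynomial. This requires both the equivalence $(a+b)^2\simeq a^2+b^2$ and the restriction $p\in\mathbb{Q}$, and it is exactly what forces the use of the sub-unit exponent $q$. The remaining work is routine matching of the degree of $P$ and of the powers of $2$ with the stated constant.
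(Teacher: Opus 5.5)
Your reduction is exactly the paper's: the pointwise sandwich
$A^{(p-2)/2}|\nabla v|^2\le(|\nabla w|+|\nabla v|)^{p-2}|\nabla v|^2\le 2^{(p-2)/2}A^{(p-2)/2}|\nabla v|^2$, followed by Lemma~\ref{th_norm_invest} with a sub-unit exponent applied to a polynomial. Every computation you actually carry out is correct.

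\textbf{The gap.} It is the last step, where you assert that ``exponent bookkeeping'' will produce the constant $2^{p/2+1/k_p}(2p^2k_p^2(r-1)^2+1)$. Under the hypothesis as printed, $(p-2)/2=k_p/\ell_p$, so your argument forces $q=1/\ell_p$ and $\deg P\le p\ell_p(r-1)$. What it proves is therefore \eqref{QN_invest} with constant $2^{p/2+1/\ell_p}C_{\inv,d}(2p^2\ell_p^2(r-1)^2+1)|F|/(d|K|)$, i.e.\ with $k_p$ and $\ell_p$ interchanged.

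Neither of your proposed repairs changes this.
\begin{itemize}
\item The pair $(k_p,\ell_p)$ is given. Any other representation $(p-2)/2=a/b$ yields, by your method, $2^{p/2+1/b}(2p^2b^2(r-1)^2+1)$ with $b$ a multiple of the reduced denominator.
\item Writing the integrand as a $1/k_p$-th power of a polynomial would require $A^{k_p^2/\ell_p}|\nabla v|^{2k_p}$ to be a polynomial, which is false in general.
\end{itemize}
Concretely, for $p=3$ and $(k_p,\ell_p)=(1,2)$ that function is $A^{1/2}|\nabla v|^2$. The best your method gives is $4C_{\inv,d}(72(r-1)^2+1)$ (times $|F|/(d|K|)$). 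This exceeds the claimed $2^{5/2}C_{\inv,d}(18(r-1)^2+1)$ for every $r\ge 2$.

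\textbf{Where the mismatch comes from.} It sits in the statement, not in your computation. The paper's proof writes $A^{(p-2)/2}|\nabla v|^2=\big|A^{\ell_p}|\nabla v|^{2k_p}\big|^{1/k_p}$ and asserts $A^{\ell_p}|\nabla v|^{2k_p}\in\Pa_{pk_p(r-1)}(K)$. Both hold only if $(p-2)/2=\ell_p/k_p$, i.e.\ $p=2+2\ell_p/k_p$. Under that reading the degree is $2(r-1)(k_p+\ell_p)=pk_p(r-1)$ and $q=1/k_p$, and $2^{p/2-1}\cdot 2^{1+1/k_p}$ reproduces the stated constant exactly. So the paper's proof is your proof with the names $k_p,\ell_p$ swapped.

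\textbf{How to finish.} ``Exchanging the roles'' is legitimate only as a relabelling of the hypothesis. You should say explicitly that you prove the lemma under the convention $p=2+2\ell_p/k_p$; equivalently, under the printed convention, with $\ell_p$ in place of $k_p$ in the constant. Do not claim the printed constant.

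Under the printed convention your method does recover the printed bound when $k_p$ is at least the reduced denominator of $(p-2)/2$, for instance for all $p\ge 4$. This is because $b\mapsto 2^{1/b}(2p^2b^2(r-1)^2+1)$ is increasing for $r\ge 2$, and $r=1$ is trivial. It does not recover it in general.
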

\begin{proof}
    From the equivalence of norms in $\mathbb{R}^2$ and the definitions of $k_p$ and $\ell_p$, we have
    \begin{align*}
        \int_F\left( |\nabla w| + |\nabla v| \right)^{p-2}|\nabla v|^2\d s &\leq 2^{p/2 -1}\int_F\left( |\nabla w|^2 + |\nabla v|^2 \right)^{\frac{p-2}{2}}|\nabla v|^2\d s \\
        &= 2^{p/2 -1}\int_F\left|\left( |\nabla w|^2 + |\nabla v|^2 \right)^{\ell_p}|\nabla v|^{2k_p}\right|^{1/k_p}\d s.
    \end{align*}
    We observe that $\left( |\nabla w|^2 + |\nabla v|^2 \right)^{\ell_p}|\nabla v|^{2k_p}\in\Pa_{pk_p(r-1)}(K)$ --- since $pk_p(r-1)\in\mathbb{N}_0$ --- and we apply the inverse estimate \eqref{LpFinv} with $q=1/k_p$ in the above inequality, to obtain
    \begin{align}\label{invweight}
        \int_F&\left( |\nabla w| + |\nabla v| \right)^{p-2}|\nabla v|^2\d s \notag\\
        & \qquad \leq 2^{p/2 + 1/k_p}C_{\inv,d}\frac{(2p^2k_p^2(r-1)^2+1)|F|}{d|K|}\int_K\left|\left( |\nabla w|^2 + |\nabla v|^2 \right)^{\ell_p}|\nabla v|^{2k_p}\right|^{1/k_p}\d \vec{x} \notag\\ 
        & \qquad = 2^{p/2 + 1/k_p}C_{\inv,d}\frac{(2p^2k_p^2(r-1)^2+1)|F|}{d|K|}\int_K\left( |\nabla w|^2 + |\nabla v|^2 \right)^{\frac{p-2}{2}}|\nabla v|^2\d \vec{x} \notag\\ 
        &\qquad \leq 2^{p/2 + 1/k_p}C_{\inv,d}\frac{(2p^2k_p^2(r-1)^2+1)|F|}{d|K|}\int_K\left( |\nabla w| + |\nabla v| \right)^{p-2}|\nabla v|^2\d \vec{x},
    \end{align}
    and the proof is complete.
\end{proof}

\begin{remark}\label{rem_k}
Some comments are in order.
\begin{enumerate}
  \item The above result is an inverse estimate for non-polynomial functions, which may be quite surprising at first. It is expected that the arguments applied for the proof of \eqref{QN_invest} can produce more general non-polynomial inverse estimates that could be useful for IPDG methods for other degenerate nonlinear PDEs.
  \item A result of this type for linear $w$ (giving constant $\nabla w$) without tracking the dependence on the polynomial degree and on $p$ has been given in \cite[Lemma 3.6]{lynm2001}. The proof given therein using Jensen's inequality is not applicable in the present $hp$-version setting of general polynomial $w$, requiring the completely different-in-spirit proof given above.  
    \item The result holds for rational $p$ only. It is conceivable that, with a density or similar argument, validity for all $p$ can be recovered. We have not been able to carry this through, however, without introducing unknown constants in the estimate. Since this estimate will drive the definition of the discontinuity-penalization parameter below, we have opted for the most general result with a known constant.
    \item In practical computations, the choice of penalty parameter based on \eqref{QN_invest} need not be selected proportional to the square of the denominator $k_p$. Thus, it is not clear at this point if this dependence is an artifact of the method of proof. Nevertheless, this practical observation suggests that it may be sufficient to vary the choice of discontinuity-penalization parameter continuously with respect to $p$.
  \item   The analysis below can be immediately generalized for $p \notin \mathbb{Q}$, once the above inverse estimate is proven for irrational $p$.

\end{enumerate}
\end{remark}

\subsection{Polynomial approximation.}\label{DG_approx}
We consider $hp$-version polynomial approximation on simplices, to be used below for the proof of \emph{a priori} error bounds. The required estimates will be given as an assumption for generality. However, we will present an example, where the assumption holds true, based on known results from the literature. 
\begin{assumption}\label{assumpt_best_app}
    Let $K\in\Ta$, $p\in(2,\infty)$ and $v\in W^{s,p}(K)$, for some $2 \leq s \leq r_K+1$. We assume that there exists a polynomial $\Pi_{r_K} v \in \Pa_{r_K}(K)$ and there exist $0\leq\beta_0\leq\beta_1\leq\beta_2$, such that $\Pi_{r_K}v = v $, for all  $v\in\Pa_{r_K}(K)$, and
    \begin{align}
        \|v - \Pi_{r_K} v\|_{W^{m,p}(K)} &\leq C \frac{h_K^{s-m}}{r_K^{s-m-\beta_m}}\|v\|_{W^{s,p}(K)}, \text{ } 0 \leq m \leq 2 \leq s; \label{proj_elem}
    \end{align}
    here $C>0$ is a constant depending on $s,p$ and the shape-regularity constant of $\Ta$. Moreover, $\beta_0$, $\beta_1$ and $\beta_2$ are allowed to depend on $s,p$ and $d$, but are assumed to be bounded with respect to $s$ and to $p$.
\end{assumption}

Assumption \ref{assumpt_best_app} is a generalization to the standard optimal $hp$-polynomial approximation results (with $\beta_m=0$) in Hilbertian Sobolev norms, proven in \cite{BabuskaSuri1987, MunozSola1997}. Below, we give an example of exponents for which a proof is available. 
\begin{lemma}\label{th_hp_polyap}
    Assumption \ref{assumpt_best_app} is satisfied with $\beta_0=\beta_1=\beta_2=1$ for $d=1$, while for $d=2,3$, we have the cases
    \begin{equation*}
        \beta_0 = d/2 - d/p,
    \end{equation*}
    \begin{equation*}
        \beta_1 = \begin{cases}
            d/2 - d/p, & d=2, \text{ or } d = 3 \text{ \& } p\in(2,6], \text{ or } d=3 \text{ \& } p\in(6,\infty) \text{ \& } s \geq 3; \\
            d - 2d/p, & d=3 \text{ \& } p\in(6,\infty) \text{ \& } s = 2,
        \end{cases}
    \end{equation*}
    and
    \begin{equation*}
        \beta_2 = \begin{cases}
            d/2 - d/p, & d=2 \text{ \& } s \geq 3, \text{ or } d=3 \text{ \& } p\in(2,6] \text{ \& } s \geq 3, \text{ or } d=3 \text{ \& } p\in(6,\infty) \text{ \& } s \geq 4; \\
            d - 2d/p, & d = 2 \text{ \& } s = 2, \text{ or } d=3 \text{ \& } p\in(2, 6] \text{ \& } s = 2, \text{ or } d = 3 \text{ \& } p\in(6,\infty) \text{ \& } s = 2, 3. 
        \end{cases}
    \end{equation*}

\end{lemma}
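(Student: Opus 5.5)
The plan is to take the classical Hilbertian $hp$-approximation results of \cite{BabuskaSuri1987, MunozSola1997} as the starting point and transfer them to $W^{m,p}$ norms. The transfer uses Sobolev embedding and Gagliardo--Nirenberg interpolation, applied on the reference simplex and then scaled. The projector $\Pi_{r_K}$ will be the one from those works: an $H^s$-stable polynomial projector that reproduces $\Pa_{r_K}(K)$ and satisfies
\[
\|v-\Pi_{r}v\|_{H^{m}(\hat K)}\le C r^{-(t-m)}\|v\|_{H^{t}(\hat K)}, \qquad 0\le m\le t .
\]
The range of admissible $m$ depends on the construction and extends to $m=2$ when $t\ge 2$. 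Since the constants in Assumption \ref{assumpt_best_app} may depend on shape regularity, it suffices to prove the bound on $\hat K$ with $h=1$. The factor $h_K^{s-m}$ then follows from affine scaling of $W^{m,p}$ seminorms, combined with the usual Bramble--Hilbert/Deny--Lions argument on the full norms.

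The key steps are as follows.

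\emph{Step 1: $d=1$.} Here $W^{s,p}(0,1)\subset H^{s}(0,1)$ because $p>2$ and the interval is bounded. In the other direction, $H^{m+1}\subset W^{m,p}$ (indeed $\subset W^{m,\infty}$), so
\[
\|e\|_{W^{m,p}}\le C\|e\|_{H^{m+1}}\le C r^{-(s-m-1)}\|v\|_{H^s}\le C r^{-(s-m-1)}\|v\|_{W^{s,p}},
\]
where $e=v-\Pi_r v$. This gives $\beta_m=1$.

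\emph{Step 2: $d=2,3$, the fractional loss $d/2-d/p$.} Apply Gagliardo--Nirenberg to the derivatives $D^m e$:
\[
\|D^m e\|_{L^p}\le C\|D^m e\|_{L^2}^{1-\theta}\|e\|_{H^{m+1}}^{\theta}+C\|e\|_{H^m}, \qquad \theta=d/2-d/p .
\]
This requires $\theta\le 1$, which is exactly $p\le 2d/(d-2)$; that is, $p\le 6$ for $d=3$ and all $p$ for $d=2$. Inserting the $H^m$ and $H^{m+1}$ rates gives the order $r^{-(s-m)+\theta}$, i.e.\ $\beta_m=d/2-d/p$. We also need $s\ge m+1$ so that the $H^{m+1}$ error is controlled by $\|v\|_{H^s}$. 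When $m=2$ and $s=2$ this fails, so in that case we use instead
\[
\|D^2e\|_{L^p}\le\|D^2v\|_{L^p}+\|D^2\Pi_r v\|_{L^p},
\]
and bound the polynomial term by the polynomial inverse estimate $\|\cdot\|_{L^p}\le C r^{2d(1/2-1/p)}\|\cdot\|_{L^2}$ together with the $H^2$-stability of $\Pi_r$. This yields $\beta_2=d-2d/p$, matching the stated case.

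\emph{Step 3: $d=3$ with $p>6$.} Now $\theta=3/2-3/p>1$. For $s\ge m+2$ I would interpolate between $H^m$ and $H^{m+2}$ with exponent $\theta/2$, which again produces the rate loss $d/2-d/p$. This explains the thresholds $s\ge 3$ for $m=1$ and $s\ge4$ for $m=2$. In the remaining low-$s$ cases ($s=2$ for $m=1$; $s=2,3$ for $m=2$) I would fall back on the triangle-inequality and inverse-estimate route from Step 2, giving the loss $d-2d/p$.

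The main obstacle is making the low-regularity cases rigorous, especially $H^{m+1}$ or $H^{m+2}$ stability of $\Pi_r$ with controlled $r$-dependence. Equally delicate is handling the lower-order Gagliardo--Nirenberg terms so that no extra powers of $r$ appear. I would first try to use the simultaneous-approximation projector of \cite{MunozSola1997}, which gives all $H^j$ rates for $j\le s$ at once, and check the exponent bookkeeping case by case against the table in the statement.
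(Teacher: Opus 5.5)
Your framework matches what the paper indicates: Hilbertian $hp$-estimates from \cite{BabuskaSuri1987, MunozSola1997}, transferred to $W^{m,p}$ by Gagliardo--Nirenberg, with an inverse-estimate fallback. However, the fallback in Step~3 does not give the stated exponents for $d=3$, $p>6$ in the cases $(m,s)=(1,2)$ and $(m,s)=(2,3)$. There $s-m=1$, so $\beta_m=d-2d/p$ in Assumption~\ref{assumpt_best_app} requires $\|D^m e\|_{L^p}\le C r^{d-2d/p-1}\|v\|_{W^{s,p}}$. The triangle-inequality route $\|D^m e\|_{L^p}\le \|D^m v\|_{L^p}+C r^{d-2d/p}\|D^m\Pi_r v\|_{L^2}$ discards the approximation property entirely. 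It therefore yields only $r^{d-2d/p}$, i.e.\ $\beta_m=d-2d/p+1$. It produces the claimed loss only when $s=m$, which is why it works for $(m,s)=(2,2)$ but fails in these two cases.

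The missing step is one more interpolation, between the Hilbertian error and the inflated stability bound. Put $w=D^m e$ and $\theta=d/2-d/p$, and use the mixed Gagliardo--Nirenberg inequality
\[
\|w\|_{L^p}\le C\|\nabla w\|_{L^p}^{a}\|w\|_{L^2}^{1-a}+C\|w\|_{L^2},\qquad a=\frac{\theta}{1+\theta}<1.
\]
Feed it $\|w\|_{L^2}\le Cr^{-1}\|v\|_{H^{m+1}}$ and $\|\nabla w\|_{L^p}\le Cr^{2\theta}\|v\|_{W^{m+1,p}}$; the latter is your triangle/inverse argument applied at order $m+1=s$. This gives the rate $r^{2\theta a-(1-a)}$. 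Since $a\le 2\theta/(1+2\theta)$, we get $2\theta a-(1-a)\le 2\theta-1$, hence $\beta_m=d-2d/p$ with room to spare.

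A smaller slip of the same kind occurs in Step~1. The bound $\|e\|_{W^{m,p}}\le C\|e\|_{H^{m+1}}$ is useless when $m=s=2$, because $e\notin H^3$ in general. So for $d=1$, $(m,s)=(2,2)$ you again need the triangle/inverse argument, which gives $r^{1-2/p}\le r$. The paper itself simply cites \cite{Melenk2005} for $d=1$ and omits all details for $d=2,3$, so this case-by-case bookkeeping is exactly what your proof must supply.
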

\begin{proof}
    The exponents are proven in \cite{Melenk2005} for $d=1$. For $d=2,3$, they follow by applying Gagliardo--Nirenberg interpolation inequalities and the Hilbertian norm best approximation estimates from \cite{BabuskaSuri1987, MunozSola1997}. We omit the details in the interest of brevity.
\end{proof}

Below, we prefer to make use of the approximation estimates as presented in Assumption \ref{assumpt_best_app}, where the $\beta_m$'s are not explicitly defined. We choose this approach, so that, if sharper $hp$-polynomial approximation results are proven, then the convergence rates with respect to the local polynomial degrees of the robust IPDG method will automatically improve.
\section{Monotonicity, coercivity and boundedness}\label{DG_monot_cont}
We begin by showing a monotonicity result with respect to the quasi-norm.
\begin{lemma}\label{th_DGmonot}
    Let $\varepsilon>0$, $p\in(2,\infty)\cap\mathbb{Q}$ and let $k_p, \ell_p\in\mathbb{N}$, such that $p = 2 + 2k_p/\ell_p$. For $F\in\Fa$, we define
    \begin{equation}\label{GKF}
        \Gg_{K,F} := 2^{p/2+1/k_p}C_{\inv,d}p^2k_p^2\frac{(2(r-1)^2+1)|F_K|}{d|K|}, \text{ } K\in\{K^+,K^-\},
    \end{equation}
    For $\zeta_F^\pm := \varepsilon/(m_{K^\pm}\mu_\varepsilon \Gg_{K^\pm,F})$, we select
    \begin{equation}\label{sigma}
        w_F^\pm = \zeta_F^\pm/(\zeta_F^+ + \zeta_F^-) \quad\text { and }\quad  \sigma_F = 1/(\zeta_F^+ + \zeta_F^-).
    \end{equation} 
    Then, for $u_h,v_h\in S_\Ta^\vec{r}$ and $u\in W^{1,p}_0(\Omega)$, we have
    \begin{equation}\label{DGmonot}
    \begin{aligned}
    C_{{\rm mon},p}(\varepsilon)  \trn u_h - v_h \trn_{(\Ta;p,u_h)}^2 
      \leq&\  B_\Ta(u_h;u_h,u_h - v_h) - B_\Ta(u_h;v_h,u_h - v_h) \\&+ (2^{p-1}+1)|\theta|\varepsilon\trn u - v_h \trn_{(\Ta;p,u_h)}^2,
    \end{aligned}
    \end{equation}
    with $C_{{\rm mon},p}(\varepsilon):=  2^{2-p}C_{2,p} - (2^{p-2}C_{1,p}+(2^{p-1}+1)|\theta|)\varepsilon$, where $C_{1,p},C_{2,p}$ as in Lemma \ref{th_delDG} and $\mu_\varepsilon$ as in Lemma \ref{th_young}.
\end{lemma}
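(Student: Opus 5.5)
Set $e_h := u_h - v_h\in S_\Ta^{\vec r}$ and expand
\[
B_\Ta(u_h;u_h,e_h) - B_\Ta(u_h;v_h,e_h) = \mathrm{I} + \mathrm{II} + \mathrm{III} + \mathrm{IV},
\]
where $\mathrm{I}$ is the volume term, $\mathrm{II}$ the penalty term, $\mathrm{III}$ the consistency term with $\lbrac |\nabla u_h|^{p-2}\nabla u_h - |\nabla v_h|^{p-2}\nabla v_h\rbrac_\vec{w}\cdot\llbracket e_h\rrbracket$, and $\mathrm{IV}$ the $\theta$-term. Since the weight is $z=u_h$ in both forms, the $\theta$-term collapses to
\[
\mathrm{IV}=\theta\int_\Gamma\Bigl( \lbrac (|\nabla u_h|^2+\sigma^2|\llbracket u_h\rrbracket|^2)^{\frac{p-2}{2}}\nabla e_h\rbrac_\vec{w}\cdot\llbracket u_h\rrbracket - \lbrac (|\nabla u_h|^2+\sigma^2|\llbracket v_h\rrbracket|^2)^{\frac{p-2}{2}}\nabla e_h\rbrac_\vec{w}\cdot\llbracket v_h\rrbracket\Bigr)\d s .
\]

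\textbf{Step 1 (volume and penalty terms).} For $\mathrm{I}$, apply \eqref{del2DG} with $a=0$ and use $|\nabla u_h|+|\nabla v_h|\geq \tfrac12(|\nabla u_h|+|\nabla e_h|)$ (since $|\nabla e_h|\le |\nabla u_h|+|\nabla v_h|$) to get $2^{2-p}C_{2,p}$ times the volume part of $\trn e_h\trn^2_{(\Ta;p,u_h)}$. For $\mathrm{II}$, apply \eqref{del2DG} with $a=|\nabla u_h^\pm|$, $\vec y=\sigma\llbracket u_h\rrbracket$, $\vec z=\sigma\llbracket v_h\rrbracket$, on each side $\pm$ separately, then combine with the weights $w_F^\pm$. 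This produces $2^{2-p}C_{2,p}\int_\Gamma \sigma\lbrac(|\nabla u_h|+\sigma|\llbracket e_h\rrbracket|)^{p-2}\rbrac_\vec{w}|\llbracket e_h\rrbracket|^2$, which is exactly the jump part of the quasi-norm with weight $u_h$.

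\textbf{Step 2 (consistency term $\mathrm{III}$).} Bound $|\mathrm{III}|$ using \eqref{del1DG} by $C_{1,p}\int_\Gamma\lbrac(|\nabla u_h|+|\nabla v_h|)^{p-2}|\nabla e_h|\rbrac_\vec{w}|\llbracket e_h\rrbracket|$, replacing $|\nabla u_h|+|\nabla v_h|$ by $2(|\nabla u_h|+|\nabla e_h|)$. Write $|\nabla e_h||\llbracket e_h\rrbracket| = (\sigma^{-1/2}|\nabla e_h|)(\sigma^{1/2}|\llbracket e_h\rrbracket|)$ and apply \eqref{genYoung2} with $\alpha=|\nabla u_h|$, choosing $\gamma$ so that the jump factor is $\sigma|\llbracket e_h\rrbracket|$ inside the weight. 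This splits the term into $\varepsilon$ times the jump part of the quasi-norm plus $\mu_\varepsilon\sigma^{-1}\lbrac(|\nabla u_h|+|\nabla e_h|)^{p-2}|\nabla e_h|^2\rbrac_\vec w$ on $\Gamma$.

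On each face, $w_F^\pm\sigma_F^{-1}=\zeta_F^\pm = \varepsilon/(m_{K^\pm}\mu_\varepsilon\Gg_{K^\pm,F})$. Lemma \ref{th_QN_invest}, applied with $w=u_h$, $v=e_h$ on $F\subset F_K$, gives a constant bounded by $\Gg_{K,F}$; this uses $p^2k_p^2(2(r-1)^2+1)\ge 2p^2k_p^2(r-1)^2+1$, which is why $\Gg$ is defined in that form. Hence the $\mu_\varepsilon$ and $\Gg$ factors cancel, and summing over the at most $m_K$ interfaces of $K$ yields $\varepsilon$ times the volume part. Altogether $|\mathrm{III}|\lesssim 2^{p-2}C_{1,p}\varepsilon\trn e_h\trn^2$.

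\textbf{Step 3 ($\theta$-term $\mathrm{IV}$; the main obstacle).} The weights in $\mathrm{IV}$ are not of monotone type, so it cannot simply be absorbed. My plan is to insert $u$: since $u\in W^{1,p}_0(\Omega)$ has $\llbracket u\rrbracket=0$, write $\llbracket u_h\rrbracket=\llbracket u_h-u\rrbracket$ and $\llbracket v_h\rrbracket=\llbracket v_h-u\rrbracket$. Bound each of the two pieces by the same Young–inverse-estimate route as in Step 2. Each piece then produces $\varepsilon\trn e_h\trn^2$ together with either $\varepsilon\trn u-v_h\trn^2$ or $\varepsilon\trn u_h-u\trn^2$ (the latter on the jump part only). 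The jump of $u_h-u$ is then rewritten as $e_h+(v_h-u)$ using Lemma \ref{le_tri_vec}, which is the source of the constant $2^{p-1}$. The delicate point is controlling the mismatch between the weight $(|\nabla u_h|^2+\sigma^2|\llbracket v_h\rrbracket|^2)^{(p-2)/2}$ and the quasi-norm weight; I would compare the two via $|\llbracket v_h\rrbracket|\le|\llbracket e_h\rrbracket|+|\llbracket u_h\rrbracket|$ together with \eqref{tri_vec}, tracking constants so that they total $(2^{p-1}+1)|\theta|\varepsilon$ on both $\trn e_h\trn^2$ and $\trn u-v_h\trn^2$.

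\textbf{Step 4.} Collecting Steps 1--3 and moving the $\varepsilon\trn e_h\trn^2$ contributions to the left-hand side gives \eqref{DGmonot} with the stated constant $C_{{\rm mon},p}(\varepsilon)$.
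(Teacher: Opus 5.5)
Your Steps 1 and 2, and the architecture of Step 3, coincide with the paper's proof. That architecture is: insert $u$ via $\llbracket u\rrbracket=0$, split the $\theta$-term into a $\llbracket u_h-u\rrbracket$-piece and a $\llbracket u-v_h\rrbracket$-piece, apply Young plus \eqref{QN_invest}, and use \eqref{tri_vec} on $\llbracket u_h-u\rrbracket=\llbracket e_h\rrbracket+\llbracket v_h-u\rrbracket$ to get $2^{p-1}$.

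The ``mismatch'' you single out in the second piece does not exist. Since $|\llbracket v_h\rrbracket|=|\llbracket u-v_h\rrbracket|$, you have $(|\nabla u_h|^2+\sigma^2|\llbracket v_h\rrbracket|^2)^{(p-2)/2}\le(|\nabla u_h|+\sigma|\llbracket u-v_h\rrbracket|)^{p-2}$. That is already the jump weight of $\trn u-v_h\trn_{(\Ta;p,u_h)}$. Comparing through $\llbracket e_h\rrbracket$ and $\llbracket u_h\rrbracket$ would only create terms that are absent from \eqref{DGmonot}.

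The genuine gap is your claim that ``the same Young--inverse-estimate route as in Step 2'' yields $\varepsilon$ on both parts. In Step 2 the $p$-weight $(|\nabla u_h|+|\nabla e_h|)^{p-2}$ is carried by the gradient. So \eqref{genYoung2} puts $\mu_\varepsilon$ on the gradient, which \eqref{QN_invest} and $w_F^\pm\sigma_F^{-1}=\zeta_F^\pm$ turn into $\varepsilon$, and puts $\varepsilon$ on the jump. In the $\theta$-term the weight $(|\nabla u_h|^2+\sigma^2|\llbracket\cdot\rrbracket|^2)^{(p-2)/2}$ is carried by the jump, and \eqref{young} is not symmetric: $\mu_\varepsilon$ always lands on the weighted variable.

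The swapped inequality $(\alpha+\beta_1)^{p-2}\beta_1\beta_2\le\varepsilon(\alpha+\beta_1)^{p-2}\beta_1^2+\mu_\varepsilon(\alpha+\beta_2)^{p-2}\beta_2^2$ is false for every $p>2$ and $\varepsilon<1/4$. With $\alpha=0$ and $\beta_2=2\varepsilon\beta_1$ it reduces to $(2\varepsilon)^{p-2}\ge1$. The sharp price for putting $\varepsilon$ on the weighted variable is a constant of order $\varepsilon^{1-p}$ on the other one. After the inverse estimate this gives order $\varepsilon^{3-p}$, not $\varepsilon$, times the volume part of $\trn e_h\trn^2$.

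Applied correctly, \eqref{young} gives the following:
\begin{itemize}
\item For the $\llbracket u-v_h\rrbracket$-piece you get $\mu_\varepsilon\trn u-v_h\trn^2$. This is harmless for Theorem~\ref{th_best_app}, but it is not the stated constant.
\item For the $\llbracket u_h-u\rrbracket$-piece, after \eqref{tri_vec}, you get $2^{p-1}\mu_\varepsilon$ times the jump part of $\trn e_h\trn^2$. This cannot be absorbed into $2^{2-p}C_{2,p}\trn e_h\trn^2$.
\end{itemize}
The paper's bound for $\Ja_{4,1}$ places $\mu_\varepsilon$ and $\varepsilon$ in exactly these swapped positions while citing \eqref{young}. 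Your proposal therefore reproduces the paper's argument, this step included.

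Closing the gap needs an extra ingredient. Two options:
\begin{itemize}
\item Replace $\mu_\varepsilon$ by $\varepsilon^{1-p}$ in $\zeta_F^\pm$. For $\varepsilon\le1$ one has $(\alpha+\beta_1)^{p-2}\beta_1\beta_2\le\varepsilon(\alpha+\beta_1)^{p-2}\beta_1^2+\varepsilon^{1-p}(\alpha+\beta_2)^{p-2}\beta_2^2$, and $\mu_\varepsilon\le\varepsilon^{1-p}$ keeps Step 2 valid.
\item Keep the difference structure of the $\theta$-term. Its \eqref{del1DG}-weight $(|\nabla u_h|+\sigma|\llbracket u_h\rrbracket|+\sigma|\llbracket v_h\rrbracket|)^{p-2}$ is the same weight that \eqref{del2DG} provides in the penalty term, so you can absorb into a fixed fraction of that term.
\end{itemize}
Either way, the penalty parameter or the constants in \eqref{DGmonot} change.
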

\begin{proof} Set $w_h=u_h-v_h$ for brevity. Then,
    \begin{align}\label{J1234}
        B_\Ta&(u_h;u_h,w_h) - B_\Ta(u_h;v_h,w_h) \notag\\
        &\quad= \int_\Omega \left( |\nabla_h u_h|^{p-2}\nabla_h u_h - |\nabla_h v_h|^{p-2}\nabla_h v_h \right)\cdot\nabla_h w_h\d \vec{x} \notag\\
        &\quad\quad + \int_\Gamma \sigma\lbrac\left(|\nabla u_h|^2 + \sigma^2|\llbracket u_h\rrbracket|^2\right)^{\frac{p-2}{2}}\llbracket u_h \rrbracket - \left(|\nabla u_h|^2 + \sigma^2|\llbracket v_h\rrbracket|^2\right)^{\frac{p-2}{2}}\llbracket v_h \rrbracket \rbrac_\vec{w} \cdot \llbracket w_h \rrbracket\d s \notag\\
        &\quad\quad - \int_\Gamma \lbrac |\nabla u_h|^{p-2}\nabla u_h - |\nabla v_h|^{p-2}\nabla v_h \rbrac_\vec{w}\cdot\llbracket w_h \rrbracket \d s \notag \\
        &\quad\quad + \theta\int_\Gamma \lbrac\Big(\left(|\nabla u_h|^2 + \sigma^2|\llbracket u_h\rrbracket|^2\right)^{\frac{p-2}{2}}\llbracket u_h \rrbracket - \left(|\nabla u_h|^2 + \sigma^2|\llbracket v_h\rrbracket|^2\right)^{\frac{p-2}{2}}\llbracket v_h\rrbracket \Big) \cdot \nabla w_h\rbrac_\vec{w} \d s \notag \\
        &\quad\equiv \Ja_1 + \Ja_2 - \Ja_3 + \theta \Ja_4.
    \end{align}
    For the estimation of $\Ja_1$, \eqref{del2DG} shows
    \begin{align}\label{J1}
        \Ja_1 
        &\geq 2^{2-p}C_{2,p}\int_\Omega (|\nabla_h u_h| + |\nabla_h w_h|)^{p-2}|\nabla_h w_h|^2\d \vec{x}.
    \end{align}
    For $\Ja_2$, we have, again from \eqref{del2DG},
    \begin{align}\label{J2}
        \Ja_2 &= \sum_{F\in\Fa}\sum_{\pm\in\{+,-\}}\!\! \! w_F^\pm\sigma_F\int_F \Big(\left(|\nabla u_h^\pm|^2 + \sigma_F^2|\llbracket u_h\rrbracket|^2\right)^{\frac{p-2}{2}}\llbracket u_h \rrbracket - \left(|\nabla u_h^\pm|^2 + \sigma_F^2|\llbracket v_h\rrbracket|^2\right)^{\frac{p-2}{2}}\llbracket v_h \rrbracket \Big) \cdot \llbracket w_h \rrbracket\d s\notag\\
        &\geq\ 2^{2-p}C_{2,p}\int_\Gamma \sigma \lbrac(|\nabla_h u_h| + \sigma|\llbracket w_h\rrbracket|)^{p-2}\rbrac_\vec{w}| \llbracket w_h \rrbracket |^2\d s.
    \end{align}
Thus, \eqref{J1} and \eqref{J2} show
\begin{equation}\label{J1J2}
    \Ja_1 + \Ja_2 \geq 2^{2-p}C_{2,p}\trn w_h\trn_{(\Ta;p,u_h)}^2.
\end{equation}
For $\Ja_3$, we apply \eqref{del1DG}, to show
\begin{align}\label{J3first}
    |\Ja_3| &\leq \sum_{F\in\Fa}\sum_{\pm\in\{+,-\}}w_F^\pm\int_F \left| |\nabla u_h^\pm|^{p-2}\nabla u_h^\pm - |\nabla v_h^\pm|^{p-2}\nabla v_h^\pm \right| |\llbracket w_h \rrbracket| \d s \notag\\
    &\leq 2^{p-2}C_{1,p}\mu_\varepsilon\sum_{F\in\Fa}\sum_{\pm\in\{+,-\}} w_F^\pm\sigma_F^{-1}\int_F \left( |\nabla u_h^\pm| + |\nabla w_h^\pm| \right)^{p-2}|\nabla w_h^\pm|^2\d s \notag\\
    &\qquad\qquad\qquad + 2^{p-2}C_{1,p}\varepsilon\int_\Gamma\sigma \lbrac\left( |\nabla u_h^\pm| + \sigma|\llbracket w_h \rrbracket| \right)^{p-2}\rbrac_\vec{w}|\llbracket w_h \rrbracket|^2 \d s,
\end{align}
with $w_h^\pm=u_h^\pm - v_h^\pm$, where in the last step, \eqref{genYoung2} with $\alpha = |\nabla u_h^\pm|$, $\beta_1 = |\nabla w_h^\pm |$, $\beta_2 = |\llbracket w_h \rrbracket|$ and $\gamma = \sigma_F$ is applied. To each $F\in\Fa$, with $F=\partial K^+\cap\partial K^-$,  we employ \eqref{QN_invest}, to show
\begin{align}\label{invest_complete}
    \int_F \big( |\nabla u_h^\pm| + |\nabla w_h^\pm | \big)^{p-2}|\nabla w_h^\pm|^2\d s
    \leq &\ \Gg_{K^\pm,F}\int_{K^\pm} \big( |\nabla u_h| + |\nabla w_h| \big)^{p-2}|\nabla w_h |^2\d \vec{x}.
\end{align}
Using \eqref{invest_complete} on \eqref{J3first} and substituting the values of $\sigma$ and of $\vec{w}$ we obtain
\begin{equation}\label{J3_even}
    |\Ja_3| \leq 2^{p-2}C_{1,p}\varepsilon\trn w_h  \trn_{(\Ta;p,u_h)}^2.
\end{equation}
It remains to bound $\Ja_4$. To that end, we observe that
\begin{align}\label{J4first}
    \Ja_4 = &\int_\Gamma \lbrac\left(|\nabla u_h|^2 + \sigma^2|\llbracket u_h - u\rrbracket|^2\right)^{(p-2)/2}\llbracket u_h - u \rrbracket \cdot \nabla w_h \rbrac_\vec{w} \d s \notag\\
    &\quad + \int_\Gamma \lbrac\left(|\nabla u_h|^2 + \sigma^2|\llbracket u - v_h\rrbracket|^2\right)^{(p-2)/2}\llbracket u - v_h \rrbracket \cdot \nabla w_h \rbrac_\vec{w} \d s = \Ja_{4,1} + \Ja_{4,2}.
\end{align}
For $\Ja_{4,1}$, we apply \eqref{young} to show
\begin{align*}
    |\Ja_{4,1}| &\leq \sum_{F\in\Fa}\sum_{\pm\in\{+,-\}}w_F^\pm \sigma_F^{-1} \int_F\left(|\nabla u_h^\pm|^2 + \sigma_F^2|\llbracket u_h - u\rrbracket|^2\right)^{\frac{p-2}{2}}\sigma_F\llbracket u_h - u \rrbracket \cdot \nabla w_h^\pm  \d s \\
    &\leq \mu_\varepsilon\sum_{F\in\Fa}\sum_{\pm\in\{+,-\}}w_F^\pm \sigma_F^{-1}\int_F\left(|\nabla u_h^\pm| + |\nabla w_h^\pm|\right)^{p-2}|\nabla w_h^\pm|^2  \d s \\
    &\qquad + \varepsilon\int_\Gamma \sigma\lbrac \left( |\nabla u_h| + \sigma|\llbracket u_h - u \rrbracket| \right)^{p-2}\rbrac_\vec{w}|\llbracket u_h - u \rrbracket|^2 \d s \\
    &\leq \mu_\varepsilon\sum_{F\in\Fa}\sum_{\pm\in\{+,-\}}w_F^\pm \sigma_F^{-1}\int_F\left(|\nabla u_h^\pm| + |\nabla w_h^\pm|\right)^{p-2}|\nabla w_h^\pm|^2  \d s \\
    &\qquad + 2^{p-1}\varepsilon\int_\Gamma \sigma\lbrac \left( |\nabla u_h| + \sigma|\llbracket w_h \rrbracket| \right)^{p-2}\rbrac_\vec{w}|\llbracket w_h \rrbracket|^2 \d s \\
    &\qquad + 2^{p-1}\varepsilon\int_\Gamma \sigma\lbrac \left( |\nabla u_h| + \sigma|\llbracket v_h - u \rrbracket| \right)^{p-2}\rbrac_\vec{w}|\llbracket v_h - u \rrbracket|^2 \d s,
\end{align*}
applying \eqref{tri_vec} in the last inequality. Treating the first term on the last step of the above inequality as in \eqref{invest_complete}, we infer
\[
    |\Ja_{4,1}| \leq 2^{p-1}\varepsilon \left(\trn u_h - v_h \trn_{(\Ta;p,u_h)}^2 + \trn u - v_h \trn_{(\Ta;p,u_h)}^2\right).
\]
Following the same steps, except for the application of \eqref{tri_vec}, we arrive to
\begin{equation}\label{J42}
    |\Ja_{4,2}| \leq \varepsilon \left(\trn w_h  \trn_{(\Ta;p,u_h)}^2 + \trn u - v_h \trn_{(\Ta;p,u_h)}^2 \right),
\end{equation}
and hence,
\begin{equation}\label{J4_even}
    |\Ja_4| \leq (2^{p-1} + 1)\varepsilon \left(\trn w_h \trn_{(\Ta;p,u_h)}^2 + \trn u - v_h \trn_{(\Ta;p,u_h)}^2 \right).
\end{equation}
We substitute \eqref{J1J2}, \eqref{J3_even} and \eqref{J4_even} into \eqref{J1234}, to complete the proof.
\end{proof}

\begin{remark}\label{rem_good_penalty}
    We observe that for $F\in\Fa$, we have $\sigma_F = 1/(\zeta_F^+ + \zeta_F^-) \leq 1/\zeta_F^*$, for  $*\in\{+,-\}$ and, thus,
    \begin{equation}\label{good_penalty}
        \sigma_F \leq \mu_\varepsilon \min_{K\in\Ta_F}\left(m_K\Gg_{K,F}\right)/\varepsilon.
    \end{equation}
    In standard IPDG with $w_F^\pm=1/2$, $\sigma_F$ is usually defined by the locally largest inverse estimate constant, to ensure coercivity. Inequality \eqref{good_penalty} shows that when using carefully constructed weights, the penalty parameter is bounded proportionally to the minimum of all the respective trace-inverse estimate constants. Thus, the weightes in the robust IPDG method result to small penalization, even in extreme scenarios, such as high local mesh-size or local polynomial degree variations. For a computational demonstration of this advantage for linear elliptic problems we refer to \cite{DongGeo2022}.
\end{remark}

We continue with a boundedness result.
\begin{lemma}\label{th_DGcont}
    Let $\varepsilon>0$, $p\in(2,\infty)\cap\mathbb{Q}$. We assume that $\sigma$ and $\vec{w}$ are defined by \eqref{sigma}. Then, for all $u_h,v_h\in S_\Ta^\vec{r}$, $u\in W^{1,p}_0(\Omega)\cap W^{1+1/p,p}(\Omega;\Ta)$, we have
    \begin{align}\label{DGcont}
        |B_\Ta(u_h;&u,u_h-v_h) - B_\Ta(u_h;v_h,u_h-v_h)| \notag \\
       \leq&\  \Tilde{C}_p\varepsilon\trn u_h - v_h\trn_{(\Ta;p,u_h)}^2 + (\mu_\varepsilon + \varepsilon)\trn u - v_h\trn_{(\Ta;p,u_h)}^2 \notag\\
        & + 2^{2p-4}C_{1,p}\mu_\varepsilon\bigg( \int_\Omega \big( |\nabla u| + |\nabla_h(u-v_h)| \big)^{p-2}|\nabla_h(u-v_h)|^2 \d \vec{x} \notag\\
        & + 2\int_\Gamma \sigma^{-1} \lbrac\left( |\nabla u_h| + |\nabla (u-v_h)| \right)^{p-2}|\nabla (u-v_h)|^2\rbrac_\vec{w} \d s\bigg),
    \end{align}
    where $\Tilde{C}_p = (2^{p-1}+2^{2p-4}+2^{2p-3})C_{1,p}+1$.
\end{lemma}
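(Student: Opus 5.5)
Set $w_h=u_h-v_h$ and $\eta=u-v_h$. Since $u\in W^{1,p}_0(\Omega)$ we have $\llbracket u\rrbracket=0$ on $\Gamma$, hence $\llbracket \eta\rrbracket=-\llbracket v_h\rrbracket$, and $\nabla_h u=\nabla u$. Mirroring \eqref{J1234}, we write
\begin{equation*}
B_\Ta(u_h;u,w_h)-B_\Ta(u_h;v_h,w_h)=\Ja_1+\Ja_2-\Ja_3+\theta\Ja_4 ,
\end{equation*}
where each term now carries the difference between its $u$-version and its $v_h$-version. Concretely:
\begin{itemize}
\item $\Ja_1=\int_\Omega(|\nabla u|^{p-2}\nabla u-|\nabla_h v_h|^{p-2}\nabla_h v_h)\cdot\nabla_h w_h$;
\item $\Ja_2$ is the penalty term, and it contains only the $v_h$-part, since $\llbracket u\rrbracket=0$;
\item $\Ja_3=\int_\Gamma\lbrac |\nabla u|^{p-2}\nabla u-|\nabla v_h|^{p-2}\nabla v_h\rbrac_\vec{w}\cdot\llbracket w_h\rrbracket$;
\item $\Ja_4$ is the $\theta$-term, which involves $(|\nabla u_h|^2+\sigma^2|\llbracket\eta\rrbracket|^2)^{\frac{p-2}{2}}\llbracket\eta\rrbracket\cdot\nabla w_h$ up to sign.
\end{itemize}

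\textbf{Volume term $\Ja_1$.} Apply \eqref{del1DG} to get the integrand bounded by $C_{1,p}(|\nabla u|+|\nabla_h v_h|)^{p-2}|\nabla_h\eta||\nabla_h w_h|$. Next use $|\nabla_h v_h|\le|\nabla u|+|\nabla_h\eta|$, so the weight is at most $2^{p-2}(|\nabla u|+|\nabla_h\eta|)^{p-2}$. Then apply the shifted Young inequality \eqref{young} with $\alpha=|\nabla u|$, $\beta_1=|\nabla_h\eta|$, $\beta_2=|\nabla_h w_h|$. This yields
\begin{equation*}
\mu_\varepsilon(|\nabla u|+|\nabla_h\eta|)^{p-2}|\nabla_h\eta|^2+\varepsilon(|\nabla u|+|\nabla_h w_h|)^{p-2}|\nabla_h w_h|^2 .
\end{equation*}
The last factor has the wrong shift, since the quasi-norm uses $u_h$ as its weight. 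We fix it by writing $\nabla u=\nabla_h u_h-\nabla_h w_h+\nabla_h\eta$ and applying the shift-change inequality \eqref{tri_vec} (or the monotonicity of $t\mapsto(a+t)^{p-2}$) to pass to $(|\nabla_h u_h|+|\nabla_h w_h|)^{p-2}|\nabla_h w_h|^2$. This costs powers of $2$ and leaves remainders absorbed into $\varepsilon\trn w_h\trn^2+\varepsilon\trn\eta\trn^2$. It is the source of the constants $2^{2p-4}$ and $2^{2p-3}$ in $\Tilde C_p$ and of the first explicit integral in \eqref{DGcont}.

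\textbf{Face terms $\Ja_3$ and $\Ja_2$.} For $\Ja_3$, proceed as in \eqref{J3first}: apply \eqref{del1DG} facewise, then \eqref{genYoung2} with $\gamma=\sigma_F$, $\alpha=|\nabla u_h^\pm|$, $\beta_1=|\nabla\eta^\pm|$, $\beta_2=|\llbracket w_h\rrbracket|$, after converting the shift $|\nabla u|+|\nabla v_h|$ into one involving $|\nabla u_h|+|\nabla\eta|$ by the same triangle-type argument. This gives the $\sigma^{-1}$-weighted face integral of $(|\nabla u_h|+|\nabla\eta|)^{p-2}|\nabla\eta|^2$ appearing in \eqref{DGcont}, which is \emph{not} converted to volume, because $\eta$ is not polynomial. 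It also gives $\varepsilon$ times the penalty part of $\trn w_h\trn^2$.

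The penalty term $\Ja_2$ equals $\int_\Gamma\sigma\lbrac(|\nabla u_h|^2+\sigma^2|\llbracket\eta\rrbracket|^2)^{\frac{p-2}{2}}\rbrac_\vec{w}\llbracket\eta\rrbracket\cdot\llbracket w_h\rrbracket$ up to sign. We bound it by \eqref{young} with $\alpha=|\nabla u_h|$, $\beta_1=\sigma|\llbracket\eta\rrbracket|$, $\beta_2=\sigma|\llbracket w_h\rrbracket|$, giving $\mu_\varepsilon\trn\eta\trn^2+\varepsilon\trn w_h\trn^2$ (penalty parts).

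\textbf{$\theta$-term $\Ja_4$.} Treat it exactly as $\Ja_{4,2}$ in the proof of Lemma \ref{th_DGmonot}. Young's inequality produces a $\sigma^{-1}$ face term in $\nabla w_h^\pm$, and this is where the quasi-norm inverse estimate \eqref{QN_invest} with the choice \eqref{sigma} enters. Since $u_h$ and $w_h$ are polynomials, that face term is bounded by $\varepsilon\trn w_h\trn^2$, plus a further $\varepsilon\trn\eta\trn^2$. This contributes to $\mu_\varepsilon+\varepsilon$ and to the $+1$ in $\Tilde C_p$.

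\textbf{Collecting and the main obstacle.} Finally, collect the coefficients; we take $|\theta|\le1$. I expect the main obstacle to be the bookkeeping of shifts: every Young application must end with weight $u_h$ on the $w_h$-terms, so that they form the quasi-norm $\trn w_h\trn_{(\Ta;p,u_h)}$ and can later be absorbed by the monotonicity estimate \eqref{DGmonot}. The $\eta$-terms may retain the weight $u$ in the volume and $u_h$ on faces. Choosing the right splitting with \eqref{tri_vec}, so that constants match $\Tilde C_p$ and $2^{2p-4}C_{1,p}\mu_\varepsilon$, is the delicate part.
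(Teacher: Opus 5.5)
Your treatment of $\Ja_1$, $\Ja_2$ and $\Ja_4$ follows the paper's structure, but the step for the face term $\Ja_3$ fails as stated.

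After \eqref{del1DG}, the integrand on each side is $C_{1,p}(|\nabla u|+|\nabla v_h^\pm|)^{p-2}|\nabla\eta^\pm|\,|\llbracket w_h\rrbracket|$. You claim that a triangle-type argument turns the shift into one involving only $|\nabla u_h^\pm|+|\nabla\eta^\pm|$. Then \eqref{genYoung2} would leave just the $\sigma^{-1}$-weighted $\eta$ face integral plus $\varepsilon$ times the penalty part of $\trn w_h\trn^2_{(\Ta;p,u_h)}$. This cannot be done. From $\nabla v_h=\nabla u_h-\nabla w_h$ and $\nabla u=\nabla u_h-\nabla w_h+\nabla\eta$, the shift is only controlled by $2|\nabla u_h|+2|\nabla w_h|+|\nabla\eta|$. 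In $\Ja_1$ the extra $|\nabla_h w_h|$ is harmless, because the factor multiplying $|\nabla_h\eta|$ is $|\nabla_h w_h|$ itself; in $\Ja_3$ that factor is $|\llbracket w_h\rrbracket|$.

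Here is a pointwise counterexample. At a point of $F$, take $\nabla u_h^\pm=\vec{0}$ and $\nabla v_h^\pm=\vec{a}$ with $|\vec{a}|=M$, so $\nabla w_h^\pm=-\vec{a}$. Take $\nabla u=(1+\delta/M)\vec{a}$, so $|\nabla\eta^\pm|=\delta$. Then $\big||\nabla u|^{p-2}\nabla u-|\nabla v_h^\pm|^{p-2}\nabla v_h^\pm\big|\ge(p-1)M^{p-2}\delta$, which is unbounded as $M\to\infty$. Meanwhile $\mu_\varepsilon\sigma^{-1}(|\nabla u_h^\pm|+|\nabla\eta^\pm|)^{p-2}|\nabla\eta^\pm|^2+\varepsilon\sigma(|\nabla u_h^\pm|+\sigma|\llbracket w_h\rrbracket|)^{p-2}|\llbracket w_h\rrbracket|^2=\mu_\varepsilon\sigma^{-1}\delta^p+\varepsilon\sigma^{p-1}|\llbracket w_h\rrbracket|^p$ does not depend on $M$.

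Hence any bound of $\Ja_3$ must produce a face term $\mu_\varepsilon\sigma^{-1}(|\nabla u_h^\pm|+|\nabla w_h^\pm|)^{p-2}|\nabla w_h^\pm|^2$. Since \eqref{DGcont} contains no such term, it must be converted into $\varepsilon$ times the volume part of $\trn w_h\trn^2_{(\Ta;p,u_h)}$. That conversion uses the quasi-norm trace inverse estimate \eqref{QN_invest} and the choice \eqref{sigma}, as in \eqref{invest_complete}. Your plan invokes the inverse estimate only for $\Ja_4$; for $\Ja_3$ this ingredient is missing.

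The paper handles this by inserting $|\nabla u_h|^{p-2}\nabla u_h$, writing $\Ka_3\le\Ka_{3,1}+\Ka_{3,2}$, where $\Ka_3$ is its name for your $|\Ja_3|$.
\begin{itemize}
\item $\Ka_{3,2}$ compares $u_h$ with $v_h$ and is bounded exactly like $\Ja_3$ in Lemma~\ref{th_DGmonot}: shift $|\nabla u_h|+|\nabla w_h|$, then \eqref{genYoung2} with $\gamma=\sigma_F$, then \eqref{invest_complete}.
\item $\Ka_{3,1}$ compares $u$ with $u_h$, with shift $|\nabla u_h|+|\nabla(u-u_h)|$. After \eqref{genYoung2}, \eqref{tri_vec} splits $\nabla(u-u_h)=\nabla\eta-\nabla w_h$ into a $w_h$ face term, sent to the volume by the inverse estimate, and the $\eta$ face term, which is kept.
\end{itemize}
This is where the factor $2$ in front of the last face integral in \eqref{DGcont} comes from. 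It is also the source of the contribution $2^{2p-3}C_{1,p}$ to $\Tilde{C}_p$, not $\Ja_1$ as you suggest.

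Two lesser points affect only constants.
\begin{itemize}
\item Your shift change in $\Ja_1$ leaves an extra $\varepsilon$-multiple of $\int_\Omega(|\nabla_h u_h|+|\nabla_h\eta|)^{p-2}|\nabla_h\eta|^2\d\vec{x}$.
\item In $\Ja_4$ the variable inside the shift is $\sigma|\llbracket\eta\rrbracket|$. So \eqref{young}, applied as stated, attaches $\mu_\varepsilon$ to the $\eta$-jump term and $\varepsilon$ to the $\nabla w_h$ face term.
\end{itemize}
In both cases the coefficient of $\trn u-v_h\trn^2_{(\Ta;p,u_h)}$ is no longer exactly $\mu_\varepsilon+\varepsilon$, but the structure of \eqref{DGcont} is unaffected.
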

\begin{proof} We observe that $\llbracket u\rrbracket=0$, since $u\in W^{1,p}(\Omega)$. Thus,
    \begin{align}\label{J123cont}
        |B_\Ta(u_h;u,u_h-v_h) - &B_\Ta(u_h;v_h,u_h-v_h)|\notag\\ 
        \leq&\  \int_\Omega \left| |\nabla u|^{p-2}\nabla u - |\nabla_h v_h|^{p-2}\nabla_h v_h \right| |\nabla_h(u_h-v_h)|\d \vec{x} \notag\\
        & + \int_\Gamma\sigma\left|\lbrac\left(|\nabla u_h|^2 + \sigma^2|\llbracket v_h\rrbracket|^2\right)^{\frac{p-2}{2}}\llbracket v_h \rrbracket \rbrac_\vec{w}\right| |\llbracket u_h- v_h \rrbracket|\d s \notag\\
        & + \int_\Gamma \left|\lbrac |\nabla u|^{p-2}\nabla u - |\nabla v_h|^{p-2}\nabla v_h \rbrac_\vec{w}\right| |\llbracket u_h - v_h \rrbracket| \d s \notag\\
        & + |\theta|\int_\Gamma \left|\lbrac \left(|\nabla u_h|^2 + \sigma^2|\llbracket v_h\rrbracket|^2\right)^{\frac{p-2}{2}}\nabla (u_h - v_h) \rbrac_\vec{w}\right| |\llbracket v_h \rrbracket| \d s \notag\\
        &\mkern-36mu = \Ka_1 + \Ka_2 + \Ka_3 + |\theta|\Ka_4.
    \end{align}
    For $\Ka_1$, \eqref{del1DG} and \eqref{young} show
    \begin{align}\label{J1cont}
        \Ka_1 &\leq C_{1,p}\int_\Omega \left( |\nabla u| + |\nabla_h v_h|\right)^{p-2}|\nabla_h (u - v_h)| |\nabla_h(u_h-v_h)|\d \vec{x} \notag \\
        &\leq 2^{2p-4}C_{1,p}\bigg( \mu_\varepsilon\int_\Omega \left( |\nabla u| + |\nabla_h (u - v_h)|\right)^{p-2}|\nabla_h (u - v_h)|^2\d \vec{x} \notag\\
        &\qquad\qquad\qquad\qquad + \varepsilon \int_\Omega \left( |\nabla_h u_h| + |\nabla_h (u_h - v_h)|\right)^{p-2}|\nabla_h(u_h-v_h)|^2\d \vec{x} \bigg).
    \end{align}
    Next,  we apply \eqref{young}, to obtain
    \begin{align}\label{J2cont}
        \Ka_2 &\leq \sum_{F\in\Fa}\sum_{\pm\in\{+,-\}}w_F^\pm\sigma_F\int_F\left(|\nabla u_h^\pm|^2 + \sigma_F^2|\llbracket v_h\rrbracket|^2\right)^{\frac{p-2}{2}}|\llbracket v_h \rrbracket| |\llbracket u_h- v_h \rrbracket|\d s \notag\\
        &\leq \mu_\varepsilon\int_\Gamma\sigma\lbrac\left(|\nabla u_h| + \sigma|\llbracket u - v_h\rrbracket|\right)^{p-2}\rbrac_\vec{w}|\llbracket u - v_h \rrbracket|^2 \d s \notag\\
        &\qquad + \varepsilon\int_\Gamma \sigma \lbrac(|\nabla u_h| + \sigma|\llbracket u_h - v_h\rrbracket|)^{p-2}\rbrac_\vec{w}| \llbracket u_h - v_h \rrbracket |^2\d s.
    \end{align}
    Now, for $\Ka_3$, we have
    \begin{align}\label{J31J32}
        \Ka_3 &\leq \int_\Gamma \left|\lbrac |\nabla u|^{p-2}\nabla u - |\nabla u_h|^{p-2}\nabla u_h \rbrac_\vec{w}\right| |\llbracket u_h - v_h \rrbracket| \d s \notag\\
        &\qquad + \int_\Gamma \left|\lbrac |\nabla u_h|^{p-2}\nabla u_h - |\nabla v_h|^{p-2}\nabla v_h \rbrac_\vec{w}\right| |\llbracket u_h - v_h \rrbracket| \d s = \Ka_{3,1} + \Ka_{3,2}.
    \end{align}
    To bound $\Ka_{3,1}$, we work as follows
    \begin{align}\label{J31}
        \Ka_{3,1}
        &\leq 2^{p-2}C_{1,p}\sum_{F\in\Fa}\sum_{\pm\in\{+,-\}}w_F^\pm\frac{\mu_\varepsilon}{\sigma_F}\int_F \left( |\nabla u_h^\pm| + |\nabla(u - u_h^\pm)| \right)^{p-2}|\nabla(u - u_h^\pm)|^2\d s \notag\\
        &\quad + 2^{p-2}C_{1,p}\varepsilon\int_\Gamma\sigma\lbrac\left( |\nabla u_h| + \sigma|\llbracket u_h - v_h \rrbracket| \right)^{p-2}\rbrac_\vec{w}|\llbracket u_h - v_h \rrbracket|^2 \d s \notag\\
        &\leq 2^{2p-3}C_{1,p}\sum_{F\in\Fa}\sum_{\pm\in\{+,-\}}w_F^\pm\frac{\mu_\varepsilon}{\sigma_F}\int_F \left( |\nabla u_h^\pm| + |\nabla(u_h^\pm - v_h^\pm)| \right)^{p-2}|\nabla(u_h^\pm - v_h^\pm)|^2\d s \notag\\
        &\quad + 2^{2p-3}C_{1,p}\mu_\varepsilon\int_\Gamma \sigma^{-1} \lbrac\left( |\nabla u_h| + |\nabla(u - v_h)| \right)^{p-2}|\nabla(u - v_h)|^2 \rbrac_\vec{w}\d s \notag\\
        &\quad + 2^{p-2}C_{1,p}\varepsilon\int_\Gamma\sigma\lbrac\left( |\nabla u_h| + \sigma|\llbracket u_h - v_h \rrbracket| \right)^{p-2}\rbrac_\vec{w}|\llbracket u_h - v_h \rrbracket|^2 \d s,
    \end{align}
    with \eqref{tri_vec} applied in the last step. Similarly, we can show that
    \begin{align}\label{J32}
        \Ka_{3,2} &\leq 2^{p-2}C_{1,p}\sum_{F\in\Fa}\sum_{\pm\in\{+,-\}}w_F^\pm\frac{\mu_\varepsilon}{\sigma_F}\int_F \left( |\nabla u_h^\pm| + |\nabla(u_h^\pm - v_h^\pm)| \right)^{p-2}|\nabla(u_h^\pm - v_h^\pm)|^2\d s \notag\\
        &\qquad + 2^{p-2}C_{1,p}\varepsilon\int_\Gamma\sigma\lbrac\left( |\nabla u_h| + \sigma|\llbracket u_h - v_h \rrbracket| \right)^{p-2}\rbrac_\vec{w}|\llbracket u_h - v_h \rrbracket|^2 \d s.
    \end{align}
    Now, we apply \eqref{invest_complete} in \eqref{J31} and \eqref{J32}, working exactly as for the bound of $\Ja_3$ in the proof of Lemma \ref{th_DGmonot}, to show
    \begin{align}\label{J3cont}
        \Ka_3 &\leq (2^{2p-3} + 2^{p-2})C_{1,p}\varepsilon\int_\Omega \left( |\nabla_h u_h| + |\nabla_h(u_h - v_h)| \right)^{p-2}|\nabla_h(u_h - v_h)|^2\d \vec{x} \notag \\
        &\qquad + 2^{p-1}C_{1,p}\varepsilon\int_\Gamma\sigma\lbrac\left( |\nabla u_h| + \sigma|\llbracket u_h - v_h \rrbracket| \right)^{p-2}\rbrac_\vec{w}|\llbracket u_h - v_h \rrbracket|^2 \d s \notag \\
        &\qquad + 2^{2p-3}C_{1,p}\mu_\varepsilon\int_\Gamma \sigma^{-1} \lbrac\left( |\nabla u_h| + |\nabla(u - v_h)| \right)^{p-2}|\nabla(u - v_h)|^2 \rbrac_\vec{w}\d s.
    \end{align}
    Finally, we work for $\Ka_4$ exactly as in \eqref{J42}, to infer
    \begin{equation}\label{J4cont}
        \Ka_4 \leq \varepsilon \left(\trn u_h - v_h \trn_{(\Ta;p,u_h)}^2 + \trn u - v_h \trn_{(\Ta;p,u_h)}^2 \right).
    \end{equation}
    We substitute \eqref{J1cont}, \eqref{J2cont}, \eqref{J3cont} and \eqref{J4cont} into \eqref{J123cont}, to complete the proof.
\end{proof}

We also have the following coercivity result, for all values of $p\in(2,\infty)$.
\begin{lemma}\label{th_coer}
    Let $p\in(2,\infty)$ (not necessarily rational!) and $\varepsilon\in(0,2^{2-p}/|\theta|)$ (under the convention $1/0 = \infty$), and suppose that  $\sigma$ and $\vec{w}$ satisfy \eqref{sigma}, with the constant $2^{p/2+1/k_p}p^2k_p^2$ in \eqref{GKF} being replaced by any constant $C_p \geq 2^{p+1}$. Then, for all $u_h\in S_\Ta^\vec{r}$, we have
    \begin{equation}\label{coer}
        B_\Ta(u_h;u_h,u_h) \geq \left( 1 - (2^{p-2}|\theta| + \max\{ 1, (4/p')^{p-1}\varepsilon^{p-2}/p \})\varepsilon \right)\trn u_h \trn_p^p.
    \end{equation}
\end{lemma}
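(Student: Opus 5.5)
The plan is to split $B_\Ta(u_h;u_h,u_h)$ into a coercive part and two skeleton consistency terms, and to absorb the latter using Young's inequality together with the trace-inverse estimate of Lemma~\ref{th_norm_invest}. Rationality of $p$ is not needed here, because no quasi-norm inverse estimate (Lemma~\ref{th_QN_invest}) is used, only the polynomial one.

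With $v=u=u_h$ in \eqref{BF}, write
\[
B_\Ta(u_h;u_h,u_h)=\Ia_1+\Ia_2-\Ia_3+\theta\Ia_4,
\]
where:
\begin{itemize}
\item $\Ia_1=\int_\Omega|\nabla_h u_h|^p\d\vec{x}$;
\item $\Ia_2=\int_\Gamma\sigma\lbrac(|\nabla u_h|^2+\sigma^2|\llbracket u_h\rrbracket|^2)^{\frac{p-2}{2}}\rbrac_\vec{w}|\llbracket u_h\rrbracket|^2\d s$;
\item $\Ia_3=\int_\Gamma\lbrac|\nabla u_h|^{p-2}\nabla u_h\rbrac_\vec{w}\cdot\llbracket u_h\rrbracket\d s$;
\item $\Ia_4$ is the $\theta$-term.
\end{itemize}
Since $p>2$ and $w_F^++w_F^-=1$, dropping $|\nabla u_h|^2$ inside the bracket gives $\Ia_2\ge\int_\Gamma\sigma^{p-1}|\llbracket u_h\rrbracket|^p\d s$. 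Hence $\Ia_1+\Ia_2\ge\trn u_h\trn_p^p$.

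For $\Ia_3$, on each $F$ and each side $\pm$, I factor
\[
|\nabla u_h^\pm|^{p-1}|\llbracket u_h\rrbracket|=\bigl(\sigma_F^{-1/p'}|\nabla u_h^\pm|^{p-1}\bigr)\bigl(\sigma_F^{1/p'}|\llbracket u_h\rrbracket|\bigr)
\]
and apply \eqref{youngoriginal} with the exponent $p$ on the jump factor. The parameter is chosen so that the jump contribution is $\varepsilon\,\sigma_F^{p-1}|\llbracket u_h\rrbracket|^p$, while the gradient contribution is $c(\varepsilon)\,w_F^\pm\sigma_F^{-1}\int_F|\nabla u_h^\pm|^p\d s$ with $c(\varepsilon)$ of size $\max\{1,(4/p')^{p-1}\varepsilon^{p-2}/p\}$ (after normalisation by $\mu_\varepsilon$).

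The face integral of the gradient is handled by applying \eqref{LpFinv} to the polynomial $|\nabla u_h|^2\in\Pa_{2(r-1)}(K)$ with $q=p/2$. Here $q$ may be any positive real, which is exactly why $p$ need not be rational. This gives
\[
\int_F|\nabla u_h^\pm|^p\d s\le 2^{p/2+1}C_{\inv,d}\,\frac{(8(r-1)^2+1)|F_K|}{d|K|}\int_K|\nabla u_h|^p\d\vec{x},
\]
using $|F|\le|F_K|$. Bounding $8(r-1)^2+1\le4(2(r-1)^2+1)$, the hypothesis $C_p\ge2^{p+1}$ should make this constant at most $\Gg_{K,F}$ (with $C_p$ in place of $2^{p/2+1/k_p}p^2k_p^2$). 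Then $w_F^\pm\sigma_F^{-1}=\zeta_F^\pm=\varepsilon/(m_{K^\pm}\mu_\varepsilon\Gg_{K^\pm,F})$. Summing over the at most $m_K$ interfaces of each $K$ cancels $m_K$, $\Gg$ and $\mu_\varepsilon$, leaving $\varepsilon c(\varepsilon)\int_\Omega|\nabla_h u_h|^p\d\vec{x}$. Thus $|\Ia_3|\le\varepsilon\max\{1,(4/p')^{p-1}\varepsilon^{p-2}/p\}\trn u_h\trn_p^p$, up to how the jump part is distributed.

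For $\Ia_4$, I bound
\[
(|\nabla u_h|^2+\sigma^2|\llbracket u_h\rrbracket|^2)^{\frac{p-2}{2}}\le(|\nabla u_h|+\sigma|\llbracket u_h\rrbracket|)^{p-2}\le2^{p-3}\bigl(|\nabla u_h|^{p-2}+\sigma^{p-2}|\llbracket u_h\rrbracket|^{p-2}\bigr).
\]
Each resulting product, $|\nabla u_h|^{p-1}|\llbracket u_h\rrbracket|$ and $\sigma^{p-2}|\nabla u_h||\llbracket u_h\rrbracket|^{p-1}$, is then treated by Young's inequality with exponents $(p',p)$ or $(p,p')$. Gradient face contributions are again sent to the elements via the same $q=p/2$ inverse estimate and the definition of $\zeta_F^\pm$. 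The aim is $|\theta\Ia_4|\le2^{p-2}|\theta|\varepsilon\,\trn u_h\trn_p^p$, and combining the three bounds gives \eqref{coer}.

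The main obstacle is bookkeeping of constants rather than any structural difficulty. One issue is checking that $2^{p/2+3}C_{\inv,d}$ from the inverse estimate is indeed dominated by $C_p C_{\inv,d}$ under $C_p\ge2^{p+1}$; for $p<4$ this may require a slightly sharper comparison between $8(r-1)^2+1$ and $2(r-1)^2+1$, or absorbing the discrepancy into $\mu_\varepsilon$. The other is distributing the Young weights so that the $\theta$-term yields exactly the factor $2^{p-2}$ and $\Ia_3$ yields the stated $\max\{\cdot\}$. The restriction $\varepsilon<2^{2-p}/|\theta|$ only ensures that the $\theta$-contribution alone leaves a positive prefactor.
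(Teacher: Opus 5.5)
Your overall structure is essentially the paper's. You use the same four-term split, the bound $\Ia_1+\Ia_2\ge\trn u_h\trn_p^p$, Young's inequality \eqref{youngoriginal} on the consistency term, and the identity $w_F^\pm\sigma_F^{-1}=\zeta_F^\pm$ to move face integrals of $|\nabla u_h|^p$ into the elements. Two steps, however, do not go through as proposed.

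\emph{The inverse estimate (fixable).} Applying \eqref{LpFinv} with $q=p/2$ to $|\nabla u_h|^2\in\Pa_{2(r-1)}(K)$ produces the factor $2^{p/2+1}(8(r-1)^2+1)$. Its ratio to $2^{p+1}(2(r-1)^2+1)$ tends to $2^{(4-p)/2}>1$ as $r\to\infty$ when $p<4$. So for $C_p=2^{p+1}$ your constant exceeds $\Gg_{K,F}$ for large $r$, and the cancellation through $\zeta_F^\pm$ is lost. No sharper comparison of $8(r-1)^2+1$ with $2(r-1)^2+1$ can repair this.

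The constant $2^{p+1}$ in the hypothesis comes from applying \eqref{LpFinv} with $q=p$ directly to the vector polynomial $\nabla u_h\in\Pa_{r-1}(K;\mathbb{R}^d)$, which is what \eqref{invest_coer} implicitly does. This is legitimate. Markov's inequality \eqref{Markov} holds for $\mathbb{R}^d$-valued polynomials in the Euclidean norm: apply it to $\vec{e}\cdot\hat{\vec{v}}$ with $\vec{e}=\hat{\vec{v}}'(\hat x)/|\hat{\vec{v}}'(\hat x)|$. Hence Lemmas \ref{th_infinv} and \ref{th_norm_invest} carry over verbatim, still for all real $q>0$. With this change and the paper's Young weights $\sigma_F^{-1}/(4\varepsilon)$ and $(4/p')^{p-1}\varepsilon^{p-1}\sigma_F^{p-1}/p$, your argument settles the case $\theta=0$.

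\emph{The $\theta$-term (the real gap).} After your splitting, the piece $\sigma^{p-2}|\nabla u_h^\pm|\,|\llbracket u_h\rrbracket|^{p-1}$ is linear in the gradient. Young with weight $O(\varepsilon)$ on $\sigma^{p-1}|\llbracket u_h\rrbracket|^p$ forces a weight of order $\varepsilon^{1-p}$ on $\sigma^{-1}|\nabla u_h^\pm|^p$. The penalty \eqref{sigma}, however, only turns a weight $\mu_\varepsilon\simeq\varepsilon^{-1}$ into $\varepsilon\int_K|\nabla u_h|^p\d\vec{x}$. This is not a matter of bookkeeping: your target $|\theta\Ia_4|\le 2^{p-2}|\theta|\varepsilon\trn u_h\trn_p^p$ is false for small $\varepsilon$.

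Here is a counterexample. Take $d=1$, a uniform mesh of size $h$, and $r_K=3$. On each element set $u_h'=t\,(1+6z(z-1))+\delta/h$, with $z\in[0,1]$ the local coordinate, and let $u_h$ drop by $\delta:=1/\sigma$ across each interior node. Then $u_h$ does not drift, and at every interior node $\sigma|\llbracket u_h\rrbracket|=1$, $|\nabla u_h^\pm|=t+\delta/h$, and $\nabla u_h\cdot\llbracket u_h\rrbracket>0$. For $\varepsilon\le 1/4$ one has $\sigma h\simeq\varepsilon^{-2}$. Hence, per interior node and after multiplying by $\sigma$:
\begin{itemize}
\item $\trn u_h\trn_p^p$ contributes $\lesssim 1+\varepsilon^{-2}t^p$;
\item $\Ia_4$ contributes $\simeq t$.
\end{itemize}
With $t=\varepsilon^{2/p}$ the ratio is $\simeq\varepsilon^{2/p}\gg\varepsilon$. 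With $\theta=-1$ both skeleton consistency terms are negative, so \eqref{coer} itself fails for small $\varepsilon$.

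The paper's \eqref{symterm} asserts the same bound on the strength of ``\eqref{young} and standard algebra'', and the same example contradicts it. For $\theta\neq0$, one needs either a penalty of order $\varepsilon^{-p}\Gg_{K,F}$ instead of $\mu_\varepsilon\Gg_{K,F}/\varepsilon$, or a coercivity constant whose loss is of order $\varepsilon^{2/p}$ rather than $2^{p-2}|\theta|\varepsilon$.
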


\begin{proof}
    We have
    \begin{align}\label{coer_before_inv}
        B_\Ta(u_h;&u_h,u_h) = \int_\Omega |\nabla_h u_h|^p\d \vec{x} + \int_\Gamma \sigma\lbrac\left(|\nabla u_h|^2 + \sigma^2|\llbracket u_h \rrbracket|^2 \right)^{\frac{p-2}{2}}\rbrac_\vec{w}|\llbracket u_h \rrbracket|^2 \d s \notag\\
        & - \int_\Gamma\lbrac|\nabla u_h|^{p-2}\nabla u_h\rbrac_\vec{w}\cdot\llbracket u_h \rrbracket \d s + \theta\int_\Gamma\lbrac\left(|\nabla u_h|^2 + \sigma^2|\llbracket u_h \rrbracket|^2 \right)^{\frac{p-2}{2}}\nabla u_h\rbrac_\vec{w}\cdot\llbracket u_h \rrbracket \d s.
    \end{align}
    For the third term in \eqref{coer_before_inv}, we have
    \begin{align*}
        \bigg|\int_\Gamma\lbrac|\nabla u_h|^{p-2}\nabla u_h\rbrac_\vec{w}&\cdot\llbracket u_h \rrbracket \d s \bigg| \leq \sum_{F\in\Fa}\sum_{\pm\in\{+,-\}}w_F^\pm \int_F|\nabla u_h^\pm|^{p-1}|\llbracket u_h \rrbracket|\d s \\
        &\leq \sum_{F\in\Fa}\sum_{\pm\in\{+,-\}}w_F^\pm \Big(\frac{\sigma_F^{-1}}{4\varepsilon}\|\nabla u_h^\pm\|_{L^p(F)}^p + \left(\frac{4}{p'}\right)^{p-1}\frac{\varepsilon^{p-1}}{p}\sigma_F^{p-1}\|\llbracket u_h \rrbracket\|_{L^p(F)}^p\Big),
    \end{align*}
    whereby H\"{o}lder's inequality and the standard Young inequality \eqref{youngoriginal} are applied in the last step. Now, the inverse estimate \eqref{LpFinv} shows
    \begin{equation}\label{invest_coer}
        \frac{\sigma_F^{-1}}{4\varepsilon}\|\nabla u_h^\pm\|_{L^p(F)}^p \leq \mu_\varepsilon\sigma_F^{-1}\Gg_{K^\pm,F}\|\nabla u_h\|_{L^p(K^\pm)}^p = \varepsilon m_{K^\pm}^{-1}\|\nabla u_h\|_{L^p(K^\pm)}^p.
    \end{equation}
    Combining the above two inequalities results into
    \begin{equation}\label{coer_after_inv}
        \left|\int_\Gamma\lbrac|\nabla u_h|^{p-2}\nabla u_h\rbrac_\vec{w}\cdot\llbracket u_h \rrbracket \d s \right| \leq \varepsilon\int_\Omega |\nabla_h u_h|^p\d \vec{x} + \left(\frac{4}{p'}\right)^{p-1}\frac{\varepsilon^{p-1}}{p}\int_\Gamma \sigma^{p-1}|\llbracket u_h \rrbracket|^p \d s.
    \end{equation}
    For the last term in \eqref{coer_before_inv}, we apply \eqref{young} and standard algebra, to infer
    \begin{align}\label{symterm}
        \bigg| \int_\Gamma\lbrac\big(&|\nabla u_h|^2 + \sigma^2|\llbracket u_h \rrbracket|^2 \big)^{\frac{p-2}{2}}\nabla u_h\rbrac_\vec{w}\cdot\llbracket u_h \rrbracket \d s \bigg| \notag \\
        &\leq 2^{p-2}\varepsilon\left( \int_\Omega |\nabla u_h|^p \d\vec{x} + \int_\Gamma \sigma\lbrac\left(|\nabla u_h|^2 + \sigma^2|\llbracket u_h \rrbracket|^2 \right)^{\frac{p-2}{2}}\rbrac_\vec{w}|\llbracket u_h \rrbracket|^2 \d s \right),
    \end{align}
   applying \eqref{invest_coer} in the last step once again. A substitution of \eqref{coer_after_inv} and \eqref{symterm} into \eqref{coer_before_inv} yields the desired estimate, upon observing the bound
    \begin{equation*}
        (1 - 2^{p-2}|\theta|\varepsilon)\int_\Gamma \sigma\lbrac\left(|\nabla u_h|^2 + \sigma^2|\llbracket u_h \rrbracket|^2 \right)^{\frac{p-2}{2}}\rbrac_\vec{w}|\llbracket u_h \rrbracket|^2 \d s \geq (1 - 2^{p-2}|\theta|\varepsilon)\int_\Gamma \sigma^{p-1}|\llbracket u_h \rrbracket|^p \d s,
    \end{equation*}
    for $\varepsilon\in(0,2^{2-p}/|\theta|)$.
\end{proof}

   Note that \eqref{coer} holds for all $p\in[2,\infty)$, that is, \emph{without} assuming $p\in\mathbb{Q}$. This is due to the proof requiring only the $L^p$-inverse estimate \eqref{LpFinv}.  Below we will see that this ensures unconditional stability of the robust IPDG method \eqref{IPDG} with the choice of weights as in \eqref{sigma}, for all $p\in[2,\infty)$.

\section{Stability and error analysis}\label{DG_staberr}
From now on, we select 
\begin{equation}\label{epsilon}
    \varepsilon = \min\left\{ \frac{2^{1-p}C_{2,p}}{2^{p-2}C_{1,p}+\Tilde{C}_p+(2^{p-1}+1)|\theta|}, \frac{2^{4-p}}{|\theta|}, \frac{p'}{4}\left( \frac{p}{4} \right)^{\frac{1}{p-1}}, \frac{1}{4} \right\},
\end{equation}
in the definition of $\Gg_{K,F}$ in Lemma \ref{th_DGmonot} and, hence, in the definitions of $\sigma$ and $\vec{w}$,
under the convention $\min\{a,\infty,b,c\} = \min\{a,b,c\}$, for $a,b,c\in\mathbb{R}$. With this choice, all terms of the form $C_1 - C_2\varepsilon$ on the left-hand sides of the inequalities in Lemmas \ref{th_DGmonot}, \ref{th_DGcont} and \ref{th_coer} become strictly positive.
\subsection{Stability.}\label{DG_stabest}
To show stability with respect to $\trn\cdot\trn_p$, we require a broken Poincar\'{e}-type inequality.
\begin{lemma}\label{le_poinc}
    Let $p\in[2,\infty)$. There exists a constant $C_\mathrm{PF}$, depending on $\Omega$ and $p$, such that
    \begin{equation}\label{poinc}
        \|v_h\|_{L^p(\Omega)} \leq C_\mathrm{PF}\trn v_h \trn_p \quad \forall v_h\in S_\Ta^\vec{r},
    \end{equation}
    where $\trn\cdot\trn_p$ is the broken Sobolev norm from \eqref{dgnorm}.
\end{lemma}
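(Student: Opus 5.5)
We would prove \eqref{poinc} by duality through a linear auxiliary problem, using the broken $W^{1,p}$ Poincar\'e--Friedrichs framework of Brenner (and its $L^p$ versions by Buffa--Ortner), adapted to the present penalty. The underlying principle is standard: a piecewise polynomial function is controlled in $L^p$ by its broken gradient together with its jumps. Our task is to check that the jump term that arises is dominated by $\int_\Gamma \sigma^{p-1}|\llbracket v_h\rrbracket|^p\,\mathrm{d}s$.

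First we apply duality. Set $g=|v_h|^{p-2}v_h\in L^{p'}(\Omega)$, so that $\|g\|_{L^{p'}(\Omega)}^{p'}=\|v_h\|_{L^p(\Omega)}^p$. We choose a vector field $\boldsymbol{\phi}\in W^{1,p'}(\Omega;\mathbb{R}^d)$ with $-\operatorname{div}\boldsymbol{\phi}=g$ and $\|\boldsymbol{\phi}\|_{W^{1,p'}(\Omega)}\le C\|g\|_{L^{p'}(\Omega)}$. To obtain it, extend $g$ by zero to a ball $B\supset\Omega$, solve the Dirichlet Poisson problem on $B$, and take $\boldsymbol{\phi}=-\nabla\psi$; smoothness of $B$ gives $W^{2,p'}$-regularity. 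This avoids any dependence on the regularity of $\Omega$, so the constant depends only on $\Omega$ and $p$.

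Next we integrate by parts elementwise:
\begin{equation*}
\|v_h\|_{L^p(\Omega)}^p=\int_\Omega \boldsymbol{\phi}\cdot\nabla_h v_h\,\mathrm{d}\vec{x}-\int_\Gamma \boldsymbol{\phi}\cdot\llbracket v_h\rrbracket\,\mathrm{d}s,
\end{equation*}
using that $\boldsymbol{\phi}$ has a single-valued trace. The volume term is bounded by H\"older's inequality by $\|\boldsymbol{\phi}\|_{L^{p'}}\|\nabla_h v_h\|_{L^p}$. For the face term, H\"older's inequality with weights gives
\begin{equation*}
\int_F|\boldsymbol{\phi}||\llbracket v_h\rrbracket|\,\mathrm{d}s\le \Big(\sigma_F^{-1}\|\boldsymbol{\phi}\|^{p'}_{L^{p'}(F)}\Big)^{1/p'}\Big(\sigma_F^{p-1}\|\llbracket v_h\rrbracket\|_{L^p(F)}^p\Big)^{1/p}.
\end{equation*}
It therefore remains to show $\sum_F\sigma_F^{-1}\|\boldsymbol{\phi}\|^{p'}_{L^{p'}(F)}\le C\|\boldsymbol{\phi}\|^{p'}_{W^{1,p'}(\Omega)}$.

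This last bound is the main obstacle. We would use the continuous trace inequality $\|\boldsymbol{\phi}\|_{L^{p'}(F)}^{p'}\le C(h_K^{-1}\|\boldsymbol{\phi}\|^{p'}_{L^{p'}(K)}+h_K^{p'-1}\|\nabla\boldsymbol{\phi}\|^{p'}_{L^{p'}(K)})$ on $K\in\Ta_F$, together with a lower bound on $\sigma_F$. From \eqref{sigma}, $\sigma_F\ge \tfrac12\min_\pm 1/\zeta_F^\pm \gtrsim \min_{K\in\Ta_F} m_K\Gg_{K,F}\gtrsim |F_K|/|K|\gtrsim h_K^{-1}$ by shape regularity, with the polynomial-degree factor $\ge1$ only helping. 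Choosing $K$ to be the minimizing element gives $\sigma_F^{-1}\|\boldsymbol{\phi}\|_{L^{p'}(F)}^{p'}\lesssim \|\boldsymbol{\phi}\|^{p'}_{L^{p'}(K)}+h_K^{p'}\|\nabla\boldsymbol{\phi}\|^{p'}_{L^{p'}(K)}$.

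The delicate point is summing over faces. With hanging nodes, an element may carry many interfaces $F\subset F_K$, which is exactly where $m_K$ enters. We would bound the contributions of all $F\subset e_j(K)$ together via the trace inequality on the full face $e_j(K)$, since $\sum_{F\subset e_j(K)}\|\cdot\|_{L^{p'}(F)}^{p'}=\|\cdot\|_{L^{p'}(e_j(K))}^{p'}$. Hence each element is counted at most $d+1$ times.

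Finally we sum, use $h_K\le\operatorname{diam}\Omega$, and combine the terms via the discrete H\"older inequality, giving $\|v_h\|^p_{L^p}\le C\|g\|_{L^{p'}}\trn v_h\trn_p=C\|v_h\|_{L^p}^{p-1}\trn v_h\trn_p$. Dividing by $\|v_h\|_{L^p}^{p-1}$ yields \eqref{poinc}.
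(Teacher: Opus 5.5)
Your argument is correct, but it takes a different route from the paper. The paper gives no proof of its own: it invokes the broken Poincar\'e inequality for general $p$ on polytopic meshes from \cite{BoMasc2025}, treating simplices with hanging nodes as polytopes with flat angles.

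You instead give a self-contained duality proof with four ingredients:
\begin{enumerate}
\item a $W^{1,p'}$ right inverse of the divergence, built from a Poisson problem on a ball, so no regularity of $\Omega$ is needed;
\item elementwise integration by parts;
\item a weighted H\"older inequality that produces exactly the jump weight $\sigma_F^{p-1}$ of \eqref{dgnorm};
\item the lower bound $\sigma_F\ge \min_{K\in\Ta_F}(m_K\mu_\varepsilon\Gg_{K,F})/(2\varepsilon)\gtrsim |F_K|/|K|$, which is built into \eqref{sigma}.
\end{enumerate}

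The decisive choice is to assign each interface to the element attaining this minimum, and to apply the trace inequality on the whole face $e_j(K)=F_K$ rather than on $F$. This makes the estimate insensitive to the number of hanging nodes (no bound on $m_K$ is needed) and to the ratio $|F|/|F_K|$. That matches the paper's remark that no such assumption is required.

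With the citation, by contrast, one has to check that the mesh hypotheses and jump weighting of \cite{BoMasc2025} are compatible with this particular $\sigma$. Your version also yields a more explicit constant. In fact, if you take the vertex opposite $F_K$ as the centre of the field $\vec{x}-\vec{x}_0$ in the divergence-theorem proof of the trace inequality, the factor $|F_K|/(d|K|)$ appears exactly and cancels against $\sigma_F^{-1}$. So even the shape-regularity dependence could be dropped.

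What the citation buys in return is generality. It is not tied to the specific penalty \eqref{sigma}, and it covers polytopic meshes, the setting of Section \ref{sec_DGpoly}, where the paper ultimately has to assume \eqref{poinc} rather than prove it.

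One cosmetic sign point: if $-\Delta\psi=g$, you need $\boldsymbol{\phi}=\nabla\psi$ to get $-\operatorname{div}\boldsymbol{\phi}=g$. Alternatively, keep $\boldsymbol{\phi}=-\nabla\psi$ and solve $\Delta\psi=g$.
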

\begin{proof}
    The proof for general $p$ is given in \cite{BoMasc2025} for polytopic meshes. In particular, upon considering simplicial elements with hanging nodes as polytopes with straight angles, the result in \cite{BoMasc2025} is sufficient of the present setting.
\end{proof}
Now, we are in position to prove a stability result for all $p\in[2,\infty)$.
\begin{theorem}\label{th_stab}
    Let $p\in[2,\infty)$ and suppose that the discontinuity-penalization and the weighted averages are given by \eqref{sigma}. Then, for $f\in L^{p'}(\Omega)$, we have
    \begin{align}\label{stab}
        \trn u_h \trn_p^{p-1} \leq 2C_\mathrm{PF}\|f\|_{L^{p'}(\Omega)}.
    \end{align}
\end{theorem}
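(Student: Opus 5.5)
The plan is to test the scheme \eqref{IPDG} with $v_h = u_h$ and combine the coercivity bound of Lemma \ref{th_coer} with the broken Poincar\'e inequality of Lemma \ref{le_poinc}.

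First, take $v_h=u_h$ in \eqref{IPDG}. Since $\sigma$ and $\vec{w}$ are given by \eqref{sigma} and $\varepsilon$ is chosen as in \eqref{epsilon}, Lemma \ref{th_coer} applies for every $p\in[2,\infty)$; it needs only the $L^p$-trace inverse estimate \eqref{LpFinv}, so rationality of $p$ is not required. Lemma \ref{th_coer} gives
\[
c_\varepsilon \trn u_h\trn_p^p \le B_\Ta(u_h;u_h,u_h) = \int_\Omega f u_h \d\vec{x},
\qquad c_\varepsilon := 1-\big(2^{p-2}|\theta| + \max\{1,(4/p')^{p-1}\varepsilon^{p-2}/p\}\big)\varepsilon .
\]
The key quantitative step, and the one place where care is needed, is to show $c_\varepsilon\ge 1/2$. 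This should follow from the constraints collected in \eqref{epsilon}, taken one at a time.
\begin{itemize}
\item Since $\varepsilon\le 2^{4-p}/|\theta|$, one only gets $2^{p-2}|\theta|\varepsilon\le 4$, which is far too weak. The first entry of the minimum in \eqref{epsilon} must do the work instead. Because $\Tilde C_p\ge 1$ and $C_{2,p}\le C_{1,p}$ (compare \eqref{del1DG} and \eqref{del2DG} with $\vec z=0$), that entry gives $\varepsilon\le 2^{1-p}C_{2,p}/((2^{p-1}+1)|\theta|)$. Hence $2^{p-2}|\theta|\varepsilon \le 2^{-1}C_{2,p}\,2^{p-2}/(2^{p-1}+1)$, which should be small, roughly $\le 1/4$ after normalizing $C_{2,p}\le 1$.
\item The condition $\varepsilon \le \frac{p'}{4}(p/4)^{1/(p-1)}$ is exactly what makes $(4/p')^{p-1}\varepsilon^{p-1}/p \le 1/4$.
\item The condition $\varepsilon\le 1/4$ bounds the remaining $\max\{1,\cdot\}\varepsilon$ term, the part equal to $\varepsilon$, by $1/4$.
\end{itemize}
If this arithmetic yields only a smaller positive constant, say $c_\varepsilon\ge 1/4$, I will still carry the argument through and adjust the constant. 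I expect the paper's constants to be arranged so that $c_\varepsilon \ge 1/2$, which gives exactly the factor $2$ in \eqref{stab}.

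Second, bound the right-hand side by H\"older's inequality and Lemma \ref{le_poinc}:
\[
\int_\Omega f u_h \d\vec{x} \le \|f\|_{L^{p'}(\Omega)}\|u_h\|_{L^p(\Omega)} \le C_{\mathrm{PF}}\|f\|_{L^{p'}(\Omega)}\trn u_h\trn_p .
\]
Combining the two steps gives $\tfrac12\trn u_h\trn_p^p\le C_{\mathrm{PF}}\|f\|_{L^{p'}(\Omega)}\trn u_h\trn_p$. If $u_h=0$ there is nothing to prove. Otherwise, dividing by $\trn u_h\trn_p$ yields \eqref{stab}.

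The main obstacle is therefore the bookkeeping that turns the choice \eqref{epsilon} into the explicit lower bound $c_\varepsilon\ge 1/2$, in particular controlling the $\theta$-term through the first entry of the minimum.
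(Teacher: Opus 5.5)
Your proposal is correct and follows the paper's proof: the coercivity bound of Lemma \ref{th_coer} with $c_\varepsilon\ge 1/2$, then H\"older's inequality and the broken Poincar\'e inequality \eqref{poinc}, then division by $\trn u_h\trn_p$. Your extra bookkeeping also holds up. It is indeed the first entry of \eqref{epsilon}, not the second, that controls the $\theta$-term and guarantees the hypothesis $\varepsilon<2^{2-p}/|\theta|$ of Lemma \ref{th_coer}. No normalization is needed either, since \eqref{del2DG} with $a=0$, $\vec{z}=\vec{0}$ forces $C_{2,p}\le 1$, which gives $2^{p-2}|\theta|\varepsilon\le C_{2,p}/(2(2^{p-1}+1))\le 1/6$ (your stray factor $2^{p-2}$ is a harmless slip) and hence $c_\varepsilon\ge 1-1/6-1/4\ge 1/2$.
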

\begin{proof}
    We apply the coercivity condition \eqref{coer} and we observe that if $\varepsilon$ satisfies \eqref{epsilon}, then it holds $1 - (2^{p-2}|\theta| + \max\{ 1, (4/p')^{p-1}\varepsilon^{p-2}/p \})\varepsilon \geq 1/2$. Thus, we conclude
   \[
        \frac{1}{2}\trn u_h \trn_p^p \leq B_\Ta(u_h;u_h,u_h) 
        \leq \|f\|_{L^{p'}(\Omega)}\|u_h\|_{L^p(\Omega)} \leq C_\mathrm{PF}\|f\|_{L^{p'}(\Omega)}\trn u_h \trn_p.\vspace{-.7cm}\]
\end{proof}
 
\subsection{Error analysis.}\label{DG_errest}
We begin by a bound of the quasi-norm error by best approximation errors. 
\begin{theorem}\label{th_best_app}
    Let $p\in(2,\infty)\cap\mathbb{Q}$ and let $u\in W^{1,p}_0(\Omega)$  and  $u_h\in S_\Ta^\vec{r}$ be the solutions of \eqref{BVP} and \eqref{IPDG}, respectively. Assume further that $u\in W^{1+1/p,p}(\Omega;\Ta)$. Then, there exists a constant $C_p>0$, depending on $p$, such that 
    \begin{align}\label{best_app}
        \trn &u_h - u\trn_{(\Ta;p,u_h)}^2 + \trn u_h - u\trn_p^p \notag\\
        &\leq C_p\inf_{v_h\in S_\Ta^\vec{r}} \bigg(\trn v_h - u\trn_{(\Ta;p,u_h)}^2 + \int_\Omega \left(|\nabla u| + |\nabla_h(v_h-u)|\right)^{p-2}|\nabla_h(v_h-u)|^2 \d \vec{x} \notag\\
        & \qquad\qquad\qquad + \int_\Gamma\sigma^{-1}\lbrac \left(|\nabla u_h| + |\nabla(v_h - u)| \right)^{p-2}|\nabla(v_h - u)|^2\rbrac_\vec{w}\d s\bigg).
    \end{align}
\end{theorem}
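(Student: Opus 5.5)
Fix an arbitrary $v_h\in S_\Ta^\vec{r}$ and split the error as $u_h-u=(u_h-v_h)+(v_h-u)$. By the weak triangle inequality of the quasi-norm (Lemma \ref{le_tri_vec}, applied elementwise and facewise with $\lambda=|\nabla u_h|$), it suffices to bound $\trn u_h-v_h\trn_{(\Ta;p,u_h)}^2$ by the right-hand side of \eqref{best_app}. The starting point is Galerkin orthogonality: subtracting \eqref{consistencyDG} from \eqref{IPDG} with test function $u_h-v_h\in S_\Ta^\vec{r}$ gives
\[
B_\Ta(u_h;u_h,u_h-v_h)-B_\Ta(u_h;v_h,u_h-v_h)=B_\Ta(u_h;u,u_h-v_h)-B_\Ta(u_h;v_h,u_h-v_h).
\]
We bound the left-hand side from below with the monotonicity Lemma \ref{th_DGmonot}, and the right-hand side from above with the boundedness Lemma \ref{th_DGcont}, in both cases with the weights and penalty from \eqref{sigma}.

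Combining the two estimates yields
\[
\bigl(C_{{\rm mon},p}(\varepsilon)-\Tilde C_p\varepsilon\bigr)\trn u_h-v_h\trn_{(\Ta;p,u_h)}^2\le \bigl(\mu_\varepsilon+\varepsilon+(2^{p-1}+1)|\theta|\varepsilon\bigr)\trn u-v_h\trn_{(\Ta;p,u_h)}^2+2^{2p-4}C_{1,p}\mu_\varepsilon\bigl(\cdots\bigr),
\]
where $(\cdots)$ collects the volume and $\sigma^{-1}$-weighted skeleton terms in \eqref{DGcont}. The choice \eqref{epsilon} gives $C_{{\rm mon},p}(\varepsilon)-\Tilde C_p\varepsilon\ge 2^{1-p}C_{2,p}>0$, since the first entry of the minimum is designed precisely to absorb $(2^{p-2}C_{1,p}+\Tilde C_p+(2^{p-1}+1)|\theta|)\varepsilon$. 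Dividing by this constant bounds $\trn u_h-v_h\trn^2_{(\Ta;p,u_h)}$ by a $p$-dependent multiple of the three terms on the right of \eqref{best_app}. The triangle step above then gives the quasi-norm part of the left-hand side.

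For the $\trn u_h-u\trn_p^p$ term, use the pointwise bound $(a+|x|)^{p-2}|x|^2\ge |x|^p$, valid for $p\ge2$ and $a\ge0$. This gives $|\nabla_h(u_h-u)|^p\le(|\nabla_h u_h|+|\nabla_h(u_h-u)|)^{p-2}|\nabla_h(u_h-u)|^2$ elementwise. On the skeleton, $\sigma^{p-1}|\llbracket u_h-u\rrbracket|^p\le\sigma\lbrac(|\nabla u_h|+\sigma|\llbracket u_h-u\rrbracket|)^{p-2}\rbrac_\vec{w}|\llbracket u_h-u\rrbracket|^2$, using $w_F^++w_F^-=1$. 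Hence $\trn u_h-u\trn_p^p\le\trn u_h-u\trn_{(\Ta;p,u_h)}^2$, and this term is controlled by the previous bound. Finally take the infimum over $v_h$.

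The main obstacle is the bookkeeping of the $\varepsilon$-dependent constants. One must check that the $\trn u-v_h\trn^2$ term produced by the nonsymmetric part in Lemma \ref{th_DGmonot} moves harmlessly to the right-hand side. One must also check that \eqref{epsilon} keeps the net coefficient of $\trn u_h-v_h\trn^2$ positive. Since rationality of $p$ enters only through Lemmas \ref{th_DGmonot} and \ref{th_DGcont}, the hypothesis $p\in\mathbb{Q}$ is exactly what those lemmas need.
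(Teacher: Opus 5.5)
Your proposal is correct and follows the paper's proof essentially step for step. Consistency \eqref{consistencyDG} turns $B_\Ta(u_h;u_h,\cdot)$ into $B_\Ta(u_h;u,\cdot)$. Lemma \ref{th_DGmonot} and Lemma \ref{th_DGcont}, with $\varepsilon$ from \eqref{epsilon}, then bound $\trn u_h-v_h\trn_{(\Ta;p,u_h)}^2$; your check that the net coefficient is at least $2^{1-p}C_{2,p}$ is right. The quasi-triangle inequality with constant $2^{p-1}$ from Lemma \ref{le_tri_vec} reassembles $\trn u_h-u\trn_{(\Ta;p,u_h)}^2$, and the pointwise bound $(a+|x|)^{p-2}|x|^2\ge|x|^p$ gives $\trn u_h-u\trn_p^p\le\trn u_h-u\trn_{(\Ta;p,u_h)}^2$.
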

\begin{proof}
    Let $v_h\in S_\Ta^\vec{r}$. We decompose the error as $e = u_h-u = \xi + \eta$, where $\xi = u_h - v_h$ and $\eta = v_h-u$. Then, \eqref{DGmonot} shows 
    \begin{align*}
        C_{{\rm mon},p}(\varepsilon)  \trn \xi \trn_{(\Ta;p,u_h)}^2 
        &\leq B_\Ta(u_h;u_h,\xi) - B_\Ta(u_h;v_h,\xi) +  (2^{p-1}+1)|\theta|\varepsilon\trn \eta \trn_{(\Ta;p,u_h)}^2\\
        & = B_\Ta(u_h;u,\xi) - B_\Ta(u_h;v_h,\xi) +  (2^{p-1}+1)|\theta|\varepsilon\trn \eta \trn_{(\Ta;p,u_h)}^2,
    \end{align*}
    upon employing \eqref{consistencyDG}. Using \eqref{DGcont} on the last bound,
   and recalling the value of $\varepsilon$ from \eqref{epsilon}, gives
    \begin{align}\label{errxi}
        2^{1-p} \trn \xi \trn_{(\Ta;p,u_h)}^2 \leq C_p\bigg(&\trn \eta\trn_{(\Ta;p,u_h)}^2  +  \int_\Omega \left( |\nabla u| + |\nabla_h\eta| \right)^{p-2}|\nabla_h\eta|^2 \d \vec{x} \notag\\
        &+ \int_\Gamma \sigma^{-1} \lbrac\left( |\nabla u_h| + |\nabla \eta| \right)^{p-2}|\nabla \eta|^2\rbrac_\vec{w} \d s\bigg).
    \end{align}
  We combine the ``triangle'' inequality for the quasi-norm, $\trn e \trn_{(\Ta;p,u_h)}^2 \leq 2^{p-1}(\trn\xi\trn_{(\Ta;p,u_h)}^2+\trn\eta\trn_{(\Ta;p,u_h)}^2)$, with \eqref{errxi} to obtain the desired upper bound for the quasi-norm error.

  Also, since $p\ge 2$, we have 
    \begin{align*}
        \trn e \trn_p^p 
        &\leq \int_\Omega (|\nabla_h u_h| + |\nabla_h e|)^{p-2}|\nabla_h e|^2\d \vec{x} + \int_\Gamma\sigma\lbrac (|\nabla u_h| + \sigma|\llbracket e \rrbracket|)^{p-2}\rbrac_\vec{w}|\llbracket e \rrbracket|^2 \d s =  \trn e \trn_{(\Ta;p,u_h)}^2,
    \end{align*}
    and the proof is complete.
\end{proof}

Theorem \ref{th_best_app} implies the following \emph{a priori} error bound for the quasi-norm and the broken Sobolev norm, under additional regularity assumptions on the exact solution and the availability of best approximation estimates.

For brevity, we employ the auxiliary function $Z:\mathbb{R}^+\to\mathbb{R}^+$, by 
\[
Z(x) := \big( C_\mathrm{st} + x\big)^{1-2/p}x^{2/p},
\] with the constant $C_\mathrm{st}>0$ to be defined precisely later.

\begin{theorem}\label{th_rates}
    Suppose that Assumption \ref{assumpt_best_app} holds and that $u\in W^{\vec{s},p}(\Omega;\Ta)$ for some $\vec{s} = (s_K: K\in\Ta)\in\mathbb{N}^{\operatorname{card}(\Ta)}$ with $2\leq s_K\leq r_K+1$. Then, there exists a constant $C$, independent of $\Ta$, of $\vec{r}$ and of $u$, such that 
    \begin{align}\label{rates}
        \trn u_h - u \trn_{(\Ta;p,u_h)}^2 + \trn u_h - u \trn_p^p \leq CZ\bigg( \sum_{K\in\Ta}\Ca_\mathrm{sub}^p(m_K,r_K)\frac{h_K^{p(s_K-1)}}{r_K^{p(s_K-1)}}\|u\|_{W^{s_K,p}(K)}^p \bigg),
    \end{align}
    with $ C_\mathrm{st}=(2C_\mathrm{PF})^{p'}\|f\|_{L^{p'}(\Omega)}^{p'} + \|\nabla u\|_{L^p(\Omega)}^p,$ in $Z$, and \[
    \Ca_\mathrm{sub}(m_K,r_K) = \max\{m_K r_K^{1/p'+\beta_0+(\beta_1-\beta_0)/p}, r_K^{\beta_1+(\beta_2-\beta_1-1)/p},r_K^{\beta_1}\}.\]
\end{theorem}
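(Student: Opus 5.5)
Starting from Theorem~\ref{th_best_app}, choose $v_h|_K=\Pi_{r_K}u$ with $\Pi_{r_K}$ from Assumption~\ref{assumpt_best_app}, and set $\eta=v_h-u$. Since $u\in W^{1,p}_0(\Omega)$, we have $\llbracket\eta\rrbracket=\llbracket v_h\rrbracket$. Three quantities must then be bounded: $\trn\eta\trn^2_{(\Ta;p,u_h)}$; the volume term $\int_\Omega(|\nabla u|+|\nabla_h\eta|)^{p-2}|\nabla_h\eta|^2$; and the face term $\int_\Gamma\sigma^{-1}\lbrac(|\nabla u_h|+|\nabla\eta|)^{p-2}|\nabla\eta|^2\rbrac_\vec{w}$.

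Every weighted quantity of the form $\int(a+b)^{p-2}b^2$ is handled by Hölder's inequality with exponents $p/(p-2)$ and $p/2$:
\[
\int (a+b)^{p-2}b^2 \le \|a+b\|_{L^p}^{p-2}\|b\|_{L^p}^2 \le \big(\|a\|_{L^p}+\|b\|_{L^p}\big)^{p-2}\|b\|_{L^p}^2 .
\]
The weights $a$ are $\nabla_h u_h$, $\nabla u$, $\sigma\llbracket\eta\rrbracket$ or the face traces of these. They are controlled globally by $\|\nabla u\|_{L^p}^p$ and by the stability bound, Theorem~\ref{th_stab}, which gives $\trn u_h\trn_p^p\le(2C_{\rm PF}\|f\|_{L^{p'}})^{p'}$. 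Set $X:=\sum_K(\text{local }L^p\text{-type contributions of }\eta)$. The elementwise Hölder bounds are then summed using the discrete Hölder inequality $\sum_K a_K^{p-2}b_K^2\le(\sum a_K^p)^{(p-2)/p}(\sum b_K^p)^{2/p}$. This produces exactly $(C_{\rm st}+X)^{1-2/p}X^{2/p}=Z(X)$, since $\|a+b\|^p\lesssim C_{\rm st}+X$. Because $p\ge2$, the term $\trn u_h-u\trn_p^p$ is already controlled by the quasi-norm in Theorem~\ref{th_best_app}.

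The task therefore reduces to showing $X\lesssim\sum_K\Ca_{\rm sub}^p(m_K,r_K)h_K^{p(s_K-1)}r_K^{-p(s_K-1)}\|u\|^p_{W^{s_K,p}(K)}$. The volume term contributes $\|\nabla\eta\|^p_{L^p(K)}\lesssim h_K^{p(s-1)}r_K^{-p(s-1-\beta_1)}\|u\|^p$, which gives the factor $r_K^{\beta_1}$. The face terms require a continuous multiplicative trace inequality,
\[
\|g\|^p_{L^p(\partial K)}\lesssim \|g\|^{p-1}_{L^p(K)}\big(h_K^{-1}\|g\|_{L^p(K)}+\|\nabla g\|_{L^p(K)}\big),
\]
applied with $g=\eta$ and $g=\nabla\eta$. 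We also need bounds on $\sigma_F$ and $\sigma_F^{-1}$. Remark~\ref{rem_good_penalty} gives $\sigma_F\lesssim m_K r_K^2|F_K|/|K|\lesssim m_Kr_K^2h_K^{-1}$ for either neighbour $K$. For the lower bound, $\sigma_F^{-1}=\zeta_F^++\zeta_F^-$, and then $\sigma_F^{-1}w_F^\pm=\zeta_F^\pm\lesssim h_{K^\pm}r_{K^\pm}^{-2}$. This is the point of the weights: each face contribution is charged to its own element.

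For the jump term, $\sigma_F^{p-1}\|\eta\|^p_{L^p(F)}\lesssim(m_Kr_K^2h_K^{-1})^{p-1}\|\eta\|^{p-1}_{L^p(K)}(h_K^{-1}\|\eta\|_{L^p(K)}+\|\nabla\eta\|_{L^p(K)})$. Substituting the $m=0,1$ bounds of Assumption~\ref{assumpt_best_app} produces a power of $r_K$ of the form $2(p-1)-(p-1)(s-\beta_0)-(s-1-\beta_1)$. Relative to $r_K^{-p(s-1)}$, this is exactly $(m_Kr_K^{1/p'+\beta_0+(\beta_1-\beta_0)/p})^p$ up to $m_K$ powers, which yields the first entry of $\Ca_{\rm sub}$. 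Similarly, $\zeta_F^\pm\|\nabla\eta\|^p_{L^p(F)}\lesssim h_Kr_K^{-2}\|\nabla\eta\|^{p-1}_{L^p(K)}(h_K^{-1}\|\nabla\eta\|+\|D^2\eta\|)$. Using $m=1,2$, this gives the exponent $\beta_1(p-1)+\beta_2-1$ minus the factor $r^{-2}$ compensation, i.e.\ the second entry $r_K^{\beta_1+(\beta_2-\beta_1-1)/p}$. The remaining term $h_K^{-1}$ times $r_K^{-2}$ is dominated by the $r_K^{\beta_1}$ entry.

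The main obstacle is the bookkeeping in the face terms. There one must apply Hölder's inequality with the mixed weights $\sigma|\llbracket\eta\rrbracket|$ and $|\nabla u_h|$ on faces, while keeping the stability bound (which controls only $\int_\Gamma\sigma^{p-1}|\llbracket u_h\rrbracket|^p$ and volume gradients) sufficient. For the face trace of $\nabla u_h$ I would use the weighted trace-inverse estimate of Lemma~\ref{th_norm_invest} with $q=p$. Combined with $w_F^\pm\sigma_F^{-1}\lesssim\Gg_{K^\pm,F}^{-1}$, this bounds $\int_F\sigma^{-1}\lbrac|\nabla u_h|^p\rbrac_\vec{w}$ by $\|\nabla_hu_h\|^p_{L^p}$, so that all weights are absorbed into $C_{\rm st}$.
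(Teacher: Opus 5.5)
Your proposal is correct and follows essentially the paper's route. You start from Theorem~\ref{th_best_app} and use H\"{o}lder's inequality with exponents $p/(p-2)$ and $p/2$ to produce $Z$. The stability bound of Theorem~\ref{th_stab}, together with the trace-inverse estimate \eqref{LpFinv} at $q=p$, absorbs the $\nabla u_h$-weights into $C_\mathrm{st}$. Then $\sigma_F\le(\zeta_F^\pm)^{-1}$, $w_F^\pm\sigma_F^{-1}=\zeta_F^\pm$ and the multiplicative trace inequality reduce everything to elementwise norms of $\eta$.

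The only difference is organisational: the paper first records the intermediate bound \eqref{best_app_high} for arbitrary $v_h$ and only then inserts $v_h|_K=\Pi_{r_K}u$ via \eqref{proj_elem}, whereas you substitute directly. Your exponent bookkeeping for the three entries of $\Ca_\mathrm{sub}$ checks out.
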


\begin{proof} Let $v_h\in S_\Ta^\vec{r}$ and set $\eta = v_h-u$ for brevity. Since the exact solution satisfies $u\in W^{2,p}(\Omega;\Ta)$, by further bounding \eqref {best_app} from above, we first show the estimate
 \begin{align}\label{best_app_high}
        &\trn u_h - u \trn_{(\Ta;p,u_h)}^2 + \trn u_h - u \trn_p^p \notag \\
        &\text{ } \leq C_p\inf_{v_h\in S_\Ta^\vec{r}}\Bigg[ Z\Big( \|\nabla_h \eta \|^p_{L^p(\Omega)}\Big) + Z\left( \sum_{K\in\Ta}m_K^p\frac{r_K^{2(p-1)}|\partial K|^p}{|K|^p}\|\eta\|_{L^p(K)}^{p-1}\left( \|\eta\|_{L^p(K)} + h_K\|\eta\|_{W^{1,p}(K)} \right)\right) \notag\\
        &\quad\quad\qquad\qquad + Z\left( \sum_{K\in\Ta}\frac{1}{r_K^2}\|\eta\|_{W^{1,p}(K)}^{p-1}\left( \|\eta\|_{W^{1,p}(K)} + h_K\|\eta\|_{W^{2,p}(K)} \right)\right)\Bigg],
    \end{align}
    where $C_\mathrm{st}=(2C_\mathrm{PF})^{p'}\|f\|_{L^{p'}(\Omega)}^{p'} + \|\nabla u\|_{L^p(\Omega)}^p$ in $Z$ is such that $\|\nabla_h u_h\|_{L^p(\Omega)}^p+\|\nabla u\|_{L^p(\Omega)}^p\leq C_\mathrm{st}$.

     First, H\"{o}lder's inequality implies
    \begin{equation}\label{quasi_to_norm_Omega}
        \int_\Omega \left(|\nabla u| + |\nabla_h \eta|\right)^{p-2}|\nabla_h \eta|^2 \d \vec{x} \leq C_p\left(\|\nabla u\|_{L^p(\Omega)}^p + \|\nabla_h \eta\|_{L^p(\Omega)}^p\right)^{1-2/p}\|\nabla_h \eta\|_{L^p(\Omega)}^2.
    \end{equation}
   Also,
    \begin{equation*}
        \trn \eta \trn_{(\Ta;p,u_h)}^2 = \int_\Omega \left(|\nabla_h u_h| + |\nabla_h \eta|\right)^{p-2}|\nabla_h \eta|^2 \d \vec{x} + \int_\Gamma \sigma\lbrac \left( |\nabla u_h| + \sigma|\llbracket \eta \rrbracket| \right)^{p-2}\rbrac_\vec{w}|\llbracket \eta \rrbracket|^2 \d s \equiv \Ia_1 + \Ia_2.
    \end{equation*}
    For $\Ia_1$, we work exactly as in \eqref{quasi_to_norm_Omega} to show
    \begin{align}\label{QNbound1}
        \Ia_1 \leq C_p\left(\|\nabla_h u_h\|_{L^p(\Omega)}^p + \|\nabla_h \eta\|_{L^p(\Omega)}^p\right)^{1-\frac{2}{p}}\|\nabla_h \eta\|_{L^p(\Omega)}^2
        \leq Z\Big( \|\nabla_h \eta \|^p_{L^p(\Omega)}\Big)
    \end{align}
    For $\Ia_2$, we apply H\"{o}lder's inequality to infer
    \begin{align}\label{I2first}
        \Ia_2 &= \sum_{F\in\Fa}\sum_{\pm\in\{+,-\}}w_F^\pm \sigma_F \int_F \left( |\nabla u_h^\pm| + \sigma_F|\llbracket \eta \rrbracket| \right)^{p-2}|\llbracket \eta \rrbracket|^2 \d s \notag\\
        &\leq \Bigg( \sum_{F\in\Fa}\sum_{\pm\in\{+,-\}}w_F^\pm \sigma_F^{-1} \int_F \left( |\nabla u_h^\pm| + \sigma_F|\llbracket \eta \rrbracket| \right)^p \d s \Bigg)^{1-2/p}\Bigg( \sum_{F\in\Fa}\sum_{\pm\in\{+,-\}}w_F^\pm \sigma_F^{p-1} \int_F |\llbracket \eta \rrbracket|^p \d s \Bigg)^{2/p} \notag\\
        &\leq C_p\Bigg( \sum_{F\in\Fa}\sum_{\pm\in\{+,-\}}w_F^\pm \sigma_F^{-1} \|\nabla u_h^\pm\|_{L^p(F)}^p + \int_\Gamma\sigma^{p-1}|\llbracket \eta \rrbracket|^p \d s \Bigg)^{1-2/p}\Bigg( \int_\Gamma\sigma^{p-1}|\llbracket \eta \rrbracket|^p \d s \Bigg)^{2/p}.
    \end{align}
    For $F\in\Fa$, $\pm\in\{+,-\}$, an application of the inverse estimate \eqref{LpFinv} gives
    \begin{align*}
        w_F^\pm \sigma_F^{-1} \|\nabla u_h^\pm\|_{L^p(F)}^p &\leq C_pw_F^\pm \sigma_F^{-1}\frac{r_{K^\pm}^2|F_{K^\pm}|}{|K^\pm|} \|\nabla u_h\|_{L^p(K^\pm)}^p \leq C_pw_F^\pm \sigma_F^{-1}\Gg_{K^\pm,F} \|\nabla u_h\|_{L^p(K^\pm)}^p \\
        &= C_p\zeta_F^\pm\Gg_{K^\pm,F} \|\nabla u_h\|_{L^p(K^\pm)}^p = C_pm_{K^\pm}^{-1} \|\nabla u_h\|_{L^p(K^\pm)}^p,
    \end{align*}
    and hence, the above inequality implies
    \begin{equation}\label{bound_uh_face}
        \sum_{F\in\Fa}\sum_{\pm\in\{+,-\}}w_F^\pm \sigma_F^{-1} \|\nabla u_h^\pm\|_{L^p(F)}^p \leq C_p\|\nabla_h u_h\|_{L^p(\Omega)}^p \leq C_pC_\mathrm{st}.
    \end{equation}
    Moreover, we apply a multiplicative trace inequality, to show
    \begin{align}\label{jump_er_1}
        \int_\Gamma\sigma^{p-1}|\llbracket \eta \rrbracket|^p \d s &= \sum_{F\in\Fa}\sigma_F^{p-1}\|\llbracket \eta \rrbracket\|_{L^p(F)}^p \leq C_p \sum_{F\in\Fa}\sum_{K\in\Ta_F}\sigma_F^{p-1}\| \eta|_K \|_{L^p(F)}^p \notag \\
        &\leq C_p \sum_{F\in\Fa}\sum_{K\in\Ta_F}\sigma_F^{p-1}\frac{|F_K|}{|K|}\|\eta\|_{L^p(K)}^{p-1}\left( \|\eta\|_{L^p(K)} + h_K\|\eta\|_{W^{1,p}(K)} \right).
    \end{align}
    For $F\in\Fa$, we have by definition of $\sigma_F$, that
    \begin{equation*}
        \sigma_F^{p-1} = (\zeta_F^+ + \zeta_F^-)^{1-p} \leq (\zeta_F^\pm)^{1-p} = C_pm_K^{p-1}\left(\frac{r_K^2|F_K|}{|K|}\right)^{p-1} \quad\forall K=K^\pm\in\Ta_F.
    \end{equation*}
    We substitute the above inequality into \eqref{jump_er_1}, to obtain
    \begin{align}\label{jump_er}
        \int_\Gamma\sigma^{p-1}|\llbracket \eta \rrbracket|^p \d s &\leq C_p\sum_{F\in\Fa}\sum_{K\in\Ta_F}m_K^{p-1}\frac{r_K^{2(p-1)}|F_K|^p}{|K|^p}\|\eta\|_{L^p(K)}^{p-1}\left( \|\eta\|_{L^p(K)} + h_K\|\eta\|_{W^{1,p}(K)} \right) \notag \\
        &\leq C_p\sum_{K\in\Ta}m_K^p\frac{r_K^{2(p-1)}|\partial K|^p}{|K|^p}\|\eta\|_{L^p(K)}^{p-1}\left( \|\eta\|_{L^p(K)} + h_K\|\eta\|_{W^{1,p}(K)} \right).
    \end{align}
    The combination of \eqref{bound_uh_face}, \eqref{jump_er} and \eqref{I2first} shows
    \begin{align}\label{QNbound2}
        \Ia_2 &\leq C_p Z\Big(\sum_{K\in\Ta}m_K^p\frac{r_K^{2(p-1)}|\partial K|^p}{|K|^p}\|\eta\|_{L^p(K)}^{p-1}\left( \|\eta\|_{L^p(K)} + h_K\|\eta\|_{W^{1,p}(K)} \right)\Big).
    \end{align}
    It remains to bound the last term in \eqref{best_app}. To that end, we apply H\"{o}lder's inequality, to obtain
    \begin{align}\label{trace1}
        &\int_\Gamma\sigma^{-1}\lbrac \left(|\nabla u_h| + |\nabla\eta| \right)^{p-2}|\nabla\eta|^2\rbrac_\vec{w}\d s = \sum_{F\in\Fa}\sum_{\pm\in\{+,-\}}w_F^\pm\sigma_F^{-1}\int_F \left(|\nabla u_h^\pm| + |\nabla\eta^\pm| \right)^{p-2}|\nabla\eta^\pm|^2 \d s \notag \\
        &\text{ } \leq C_p\Bigg( \sum_{F\in\Fa}\sum_{\pm\in\{+,-\}}w_F^\pm\sigma_F^{-1}\int_F \left(|\nabla u_h^\pm|^p + |\nabla\eta^\pm|^p \right) \d s \Bigg)^{1-2/p}\Bigg( \sum_{F\in\Fa}\sum_{\pm\in\{+,-\}}w_F^\pm\sigma_F^{-1}\int_F |\nabla\eta^\pm|^p \d s \Bigg)^{2/p} \notag\\
        &\text{ } \leq C_pZ\bigg( \sum_{F\in\Fa}\sum_{\pm\in\{+,-\}}w_F^\pm\sigma_F^{-1}\|\nabla\eta^\pm\|_{L^p(F)}^p \bigg),
    \end{align}
    where at the last step, \eqref{bound_uh_face} is applied. Furthermore, we have
    \begin{align*}
        w_F^\pm\sigma_F^{-1}\|\nabla\eta^\pm\|_{L^p(F)}^p &= \zeta_F^\pm\|\nabla\eta^\pm\|_{L^p(F)}^p \leq C_p m_{K^\pm}^{-1}\Gg_{K^\pm,F}\|\nabla\eta^\pm\|_{L^p(F)}^p \leq C_p m_{K^\pm}^{-1}\frac{|K^\pm|}{|F_{K^\pm}|r_{K^\pm}^2}\|\nabla\eta^\pm\|_{L^p(F)}^p \notag\\
        &\leq C_p \frac{m_{K^\pm}^{-1}}{r_{K^\pm}^2}\|\eta\|_{W^{1,p}(K^\pm)}^{p-1}\left( \|\eta\|_{W^{1,p}(K^\pm)} + h_{K^\pm}\|\eta\|_{W^{2,p}(K^\pm)} \right),
    \end{align*}
    where at the last step, we applied a multiplicative trace inequality. Thus, it holds
    \begin{equation*}
        \sum_{F\in\Fa}\sum_{\pm\in\{+,-\}}w_F^\pm\sigma_F^{-1}\|\nabla\eta^\pm\|_{L^p(F)}^p \leq C_p\sum_{K\in\Ta}\frac{1}{r_{K}^2}\|\eta\|_{W^{1,p}(K)}^{p-1}\left( \|\eta\|_{W^{1,p}(K)} + h_K\|\eta\|_{W^{2,p}(K)} \right).
    \end{equation*}
    Then, \eqref{trace1} becomes
    \begin{align}\label{er_trace_grad}
        &\int_\Gamma\sigma^{-1}\lbrac \left(|\nabla u_h| + |\nabla\eta| \right)^{p-2}|\nabla\eta|^2\rbrac_\vec{w}\d s \leq C_p Z\Big(\sum_{K\in\Ta}\frac{1}{r_K^2}\|\eta\|_{W^{1,p}(K)}^{p-1}\left( \|\eta\|_{W^{1,p}(K)} + h_K\|\eta\|_{W^{2,p}(K)} \right)\Big).
    \end{align}
    The summation of inequalities \eqref{quasi_to_norm_Omega}, \eqref{QNbound1}, \eqref{QNbound2} and \eqref{er_trace_grad}, and the substitution of the resulting inequality into \eqref{best_app} yield \eqref{best_app_high}.

To complete the estimate, we observe that there exists a constant $C_\mathrm{reg}$, depending on the shape-regularity of $\Ta$, such that 
    \begin{equation}\label{shapereg}
        \frac{|\partial K|}{|K|}\leq \frac{C_\mathrm{reg}}{h_K}, \text{ for all } K\in\Ta.
    \end{equation}
     Moreover, we select $v_h\in S_\Ta^\vec{r}$ as $v_h|_K = \Pi_{r_K}(u|_K)$. We substitute \eqref{proj_elem} and \eqref{shapereg} into \eqref{best_app_high} and the proof is complete after some elementary calculations.
\end{proof}

    The above theorem shows that the convergence rates of the robust IPDG method \eqref{IPDG} are \emph{optimal} with respect to the mesh size and suboptimal with respect to the polynomial degrees. The rate reduction depends on the exponents $\beta_0$, $\beta_1$ and $\beta_2$ that appear in Assumption \ref{assumpt_best_app}. We observe that if we had an optimal $hp$-approximation result, namely, with $\beta_0=\beta_1=\beta_2=0$, then the rates with respect to $r_K$ error bound \eqref{rates} would be reduced only by $1/p'$ order of $r$, which stems from the choice of the penalty parameter and cannot be improved. This is in accordance with the corresponding error estimates for the linear elliptic problem with $p=2$ (and $1/p'=1/2$) which is proven in \cite{GHM10}.

Below, we present an explicit error bound that follows immediately from substituting the $\beta_m$'s from Remark \ref{th_hp_polyap}.

\begin{corollary}\label{cor_rates}
    Under the assumptions of Theorem \ref{th_rates}, with $\beta_0$, $\beta_1$ and $\beta_2$ stated in Remark \ref{th_hp_polyap}, \eqref{rates} holds with
    \begin{equation*}
        \Ca_\mathrm{sub}(m_K,r_K) = \begin{cases}
            m_Kr_K^{1/p'}, & d = 1; \\
            \max\{m_Kr_K^{1/p'+d(1+1/p)(1/2-1/p)}, r_K^{d(1-2/p)}\}, & d=3, \text{ } p\in(6,\infty), \text{ } s_K=2; \\
            m_Kr_K^{1/p'+d(1/2-1/p)}, &\text{otherwise}.
        \end{cases}
    \end{equation*}
\end{corollary}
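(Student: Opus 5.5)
The corollary is a direct substitution of the exponents of Lemma \ref{th_hp_polyap} into the formula
\[
\Ca_\mathrm{sub}(m_K,r_K)=\max\{m_Kr_K^{1/p'+\beta_0+(\beta_1-\beta_0)/p},\,r_K^{\beta_1+(\beta_2-\beta_1-1)/p},\,r_K^{\beta_1}\}
\]
from Theorem \ref{th_rates}. Since $m_K\ge1$ and $r_K\ge1$, the maximum is governed by the largest exponent of $r_K$. The plan is to work through the cases in turn and identify which entry dominates.

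\emph{Case $d=1$.} Here $\beta_0=\beta_1=\beta_2=1$, so the three exponents are $1/p'+1$, $1-1/p$ and $1$. The first is the largest, which gives $m_Kr_K^{1+1/p'}$. This is one power of $r_K$ more than the stated $m_Kr_K^{1/p'}$, so I would first re-check the $d=1$ exponents in Lemma \ref{th_hp_polyap} against \cite{Melenk2005}. If the one-dimensional optimal rates hold with $\beta_m=0$, the three exponents become $1/p'$, $-1/p$ and $0$, and the first dominates, giving exactly the stated $m_Kr_K^{1/p'}$. I would state the case under whichever reading is consistent with the source.

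\emph{Generic case, $\beta_0=\beta_1=\beta_2=\beta:=d(1/2-1/p)$.} This holds for $d=2$ with $s\ge3$, for $d=3$ with $p\le6$ and $s\ge3$, and for $d=3$ with $p>6$ and $s\ge4$. The exponents are $1/p'+\beta$, $\beta-1/p$ and $\beta$. Since $1/p'>0>-1/p$, the first dominates and $\Ca_\mathrm{sub}=m_Kr_K^{1/p'+d(1/2-1/p)}$.

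\emph{Cases with $\beta_2=d-2d/p=2\beta$ and $\beta_0=\beta_1=\beta$.} These are $d=2$ with $s=2$, $d=3$ with $p\le6$ and $s=2$, and $d=3$ with $p>6$ and $s=3$. The second exponent becomes $\beta+(\beta-1)/p$. It stays below $1/p'+\beta$ because $(\beta-1)/p<1-1/p$, which is equivalent to $\beta<p$; this holds since $\beta<d/2\le3/2<2<p$. The remaining exponents $\beta_0=\beta_1=\beta$ are unchanged, so the dominating term is again $m_Kr_K^{1/p'+\beta}$. This establishes the ``otherwise'' line.

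\emph{Case $d=3$, $p>6$, $s=2$.} Here $\beta_0=\beta$ and $\beta_1=\beta_2=2\beta$. The first exponent is $1/p'+\beta+\beta/p=1/p'+d(1+1/p)(1/2-1/p)$, which matches the first stated entry. The second exponent is $2\beta-1/p<2\beta$, so it is absorbed by the third entry $r_K^{2\beta}=r_K^{d(1-2/p)}$. The maximum therefore reduces to the two stated terms; neither dominates uniformly in $p$, so both are kept.

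The only real care needed is the bookkeeping of which $(d,p,s)$ combination selects which $\beta_m$ (using $s_K$ elementwise), together with the $d=1$ consistency check above.
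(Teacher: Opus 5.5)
Your proposal is the paper's own argument (the paper only says the corollary follows by substituting the $\beta_m$ of Lemma~\ref{th_hp_polyap} into $\Ca_\mathrm{sub}$), and your case bookkeeping for $d=2,3$ is correct, with one minor slip: in the case $d=3$, $p>6$, $s_K=2$ one has $\beta>1$, hence $2\beta-\bigl(1/p'+\beta(1+1/p)\bigr)=(\beta-1)/p'>0$, so both stated terms are needed only because of the factor $m_K$, not because of any $p$-dependence. Your $d=1$ check is also right: with $\beta_0=\beta_1=\beta_2=1$ as literally stated in Lemma~\ref{th_hp_polyap} the first entry is $m_Kr_K^{1+1/p'}$, so the stated $m_Kr_K^{1/p'}$ is consistent only with the optimal values $\beta_m=0$; this is an inconsistency between that lemma and the corollary in the paper, which its one-line proof does not address, not a gap in your reasoning.
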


\begin{remark}\label{rem_subopt_peq2}
    Substituting $p=2$ into \eqref{rates} with the exponents of Corollary \ref{cor_rates}, we retrieve the $hp$-version error estimates of the robust IPDG method \cite{DongGeo2022} for the Poisson problem, that are optimal in $h_K$ and suboptimal in $r_K$ by $1/2$.
\end{remark}

\section{Extension to curved polytopic meshes}\label{sec_DGpoly}

We now discuss the extension of the robust IPDG method \eqref{IPDG} for meshes consisting of very general curved polytopic elements; additional mesh assumptions will be given below. Due to the general structure of $\Ta$, we remove the assumption that $\Omega$ is polytopic, and we allow it to have curved Lipschitz boundary as well. The framework of this section also admits meshes with curved simplices, and thus immediately generalizes the results obtained above to curved domains. However, the analysis will be carried out in the general setting of essentially arbitrarily-shaped elements introduced in \cite{CanDongGeo2021} for linear PDE problems.

We recall the definition of faces and interfaces from Section \ref{DG_fes}; in particular, in the context of polytopic meshes, the definitions of the \emph{interfaces} and the related parameters remain those of Section \ref{DG_fes}; see Figure \ref{fig:interfaces} for an illustration.

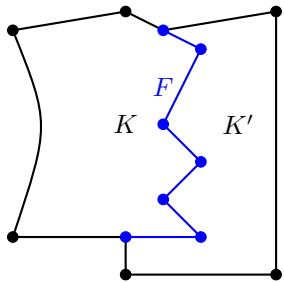
\begin{figure}[h!]
    \centering
    \begin{tikzpicture}
    \draw[thick] (0, 2.75) -- (1.5, 3) -- (2,2.75)  -- (3.5,3);
    \draw[thick] (3.5, 3) -- (3.5, -0.5) -- (1.5, -0.5) -- (1.5, 0)  -- (0, 0);
    \draw[thick] (0, 0) .. controls (0.5, 1.5) .. (0, 2.75);

    \filldraw[black] (0, 2.75) circle (2pt);
    \filldraw[black] (1.5, 3) circle (2pt);
    \filldraw[black] (3.5, 3) circle (2pt);
    \filldraw[black] (3.5, -0.5) circle (2pt);
    \filldraw[black] (1.5, -0.5) circle (2pt);
    \filldraw[black] (0, 0) circle (2pt);

    \draw[blue, thick] (2, 2.75) -- (2.5, 2.5) -- (2, 1.5);
    \draw[blue, thick] (2, 1.5) -- (2.5, 1) -- (2, 0.5) -- (2.5, 0) -- (1.5, 0);
    
    \filldraw[blue] (2, 2.75) circle (2pt);
    \filldraw[blue] (2.5, 2.5) circle (2pt);
    \filldraw[blue] (2, 1.5) circle (2pt);
    \filldraw[blue] (2.5, 1) circle (2pt);
    \filldraw[blue] (2, 0.5) circle (2pt);
    \filldraw[blue] (2.5, 0) circle (2pt);
    \filldraw[blue] (1.5, 0) circle (2pt);

    \node at (1.5, 1.5) {$K$};
    \node at (3, 1.5) {$K'$};
    \node at (2,2) {$\textcolor{blue}{F}$};
    \end{tikzpicture}
    \caption{Two neighboring elements $K$ and $K'$ sharing an interface $F$ (blue) containing six faces. The elements $K$ and $K'$ have nine faces each.}
    \label{fig:interfaces}
\end{figure}

For the design and analysis of the robust IPDG method \eqref{IPDG} on polytopic meshes, we impose the following (very mild) mesh assumptions; we refer to \cite{CanDongGeo2021} for more details.

\begin{assumption}\label{starshaped}
    For all $K\in\Ta$, we assume that $K$ is a Lipschitz domain and its boundary $\partial K$ is subdivided into mutually exclusive sets $\{F_i\}_{i=1}^{n_K}$, such that for $i=1,\dotso,n_K$, the following property is satisfied: there exists an interior point $\vec{x}_i^0\in K$ and a sub-element $K_{F_i}$, depending on $\vec{x}_i^0$, with $F_i\subset K_{F_i}$ and $K_{F_i}$ having $d$ planar faces, meeting at $\vec{x}_i^0$, such that
    \begin{enumerate}
        \item $K_{F_i}$ is star-shaped with respect to $\vec{x}_i^0$, and
        \item $\vec{m}_i(\vec{x}) \cdot \vec{n}(\vec{x}) > 0$, where $\vec{m}_i(\vec{x}) = \vec{x} - \vec{x}_i^0$ for $\vec{x}\in K_{F_i}$, and $\vec{n}$ is the outward unit normal to $F_i$.
    \end{enumerate}
\end{assumption}

We begin by stating a multiplicative trace inequality, whose proof is omitted, since it is similar to that of \cite[Lemma 4.7]{CanDongGeo2021} for the case $q=2$.
\begin{lemma}\label{th_multitr}
    Let $q\in(1,\infty)$, $K\in\Ta$ and let $\{F_i\}_{i=1}^{n_K}$ be the partition of $\partial K$, as per Assumption \ref{starshaped}. Then, for $v\in W^{1,q}(K)$ and $i=1,\dotso,n_K$, we have the trace inequality
    \begin{equation}\label{multitr}
        \|v\|_{L^q(F_i)}^q \leq \frac{\|v\|_{L^q(K_{F_i})}^{q-1}}{\displaystyle\min_{F_i}(\vec{m}_i\cdot\vec{n})}\left( d\|v\|_{L^q(K_{F_i})} + q\|\vec{m}_i\|_{L^\infty(K_{F_i})}\|\nabla v\|_{L^q(K_{F_i})} \right).
    \end{equation}
\end{lemma}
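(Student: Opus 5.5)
The plan is to use the standard Rellich/divergence-theorem argument with the vector field $\vec{m}_i(\vec{x})=\vec{x}-\vec{x}_i^0$ on the sub-element $K_{F_i}$, applied to the scalar function $|v|^q$ in place of $v^2$. By density of smooth functions in $W^{1,q}(K_{F_i})$ and continuity of the trace, it suffices to take $v\in C^1(\overline{K_{F_i}})$. For $q>1$ the function $\phi:=|v|^q$ belongs to $W^{1,1}(K_{F_i})$, with $\nabla\phi = q|v|^{q-2}v\nabla v$ almost everywhere, since $t\mapsto|t|^q$ is $C^1$ for $q>1$.

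Next we apply the divergence theorem on the Lipschitz domain $K_{F_i}$ to the field $\phi\,\vec{m}_i$, using $\operatorname{div}\vec{m}_i=d$:
\begin{equation*}
\int_{\partial K_{F_i}} |v|^q\,\vec{m}_i\cdot\vec{n}\d s = d\int_{K_{F_i}}|v|^q\d\vec{x} + q\int_{K_{F_i}}|v|^{q-2}v\,\nabla v\cdot\vec{m}_i\d\vec{x}.
\end{equation*}
The boundary of $K_{F_i}$ consists of $F_i$ together with the $d$ planar faces meeting at $\vec{x}_i^0$. On each planar face through $\vec{x}_i^0$, the vector $\vec{x}-\vec{x}_i^0$ lies in the face, so $\vec{m}_i\cdot\vec{n}=0$ and those contributions vanish. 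This is exactly where the geometry of Assumption \ref{starshaped} enters, and it is the step to check most carefully, namely that $\partial K_{F_i}\setminus F_i$ consists only of these cone faces. On $F_i$ we have $\vec{m}_i\cdot\vec{n}>0$, so the left-hand side is bounded below by $\min_{F_i}(\vec{m}_i\cdot\vec{n})\,\|v\|_{L^q(F_i)}^q$.

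For the right-hand side, the first term is $d\|v\|^{q}_{L^q(K_{F_i})}=d\|v\|_{L^q(K_{F_i})}^{q-1}\|v\|_{L^q(K_{F_i})}$. The second term is bounded by $q\|\vec{m}_i\|_{L^\infty(K_{F_i})}\int_{K_{F_i}}|v|^{q-1}|\nabla v|\d\vec{x}$. Hölder's inequality with exponents $q'=q/(q-1)$ and $q$ then gives $\int|v|^{q-1}|\nabla v|\le\|v\|_{L^q(K_{F_i})}^{q-1}\|\nabla v\|_{L^q(K_{F_i})}$.

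Dividing by $\min_{F_i}(\vec{m}_i\cdot\vec{n})$, which is positive, yields \eqref{multitr}. The only genuinely delicate points are justifying the chain rule and the divergence theorem for $|v|^q$ with non-smooth $v$, which the density argument handles, and confirming that the minimum over $F_i$ is strictly positive. For the latter we interpret the minimum as an essential infimum, which is positive by Assumption \ref{starshaped}.
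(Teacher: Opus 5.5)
Your proposal is correct and takes the same route the paper intends: the paper omits the proof and defers to the $q=2$ argument of \cite[Lemma 4.7]{CanDongGeo2021}, which is exactly the divergence theorem applied to $|v|^q\vec{m}_i$ on $K_{F_i}$ (with $\vec{m}_i\cdot\vec{n}=0$ on the planar faces through $\vec{x}_i^0$), followed by H\"older's inequality as you do. One minor quibble: pointwise positivity of $\vec{m}_i\cdot\vec{n}$ does not by itself guarantee a positive minimum over $F_i$, but if that minimum vanishes the estimate is vacuous, so nothing is lost.
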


\subsection{Inverse estimates on sub-elements.}\label{DGpoly_subelem}
We now extend the polynomial trace-type inverse estimates proven for simplices above to arbitrarily-shaped elements satisfying Assumption \ref{starshaped}. As before, such estimates are crucial for the definition of the discontinuity-penalization parameter and for the stability and error analysis of the robust IPDG method. 

We start with a trace-type inverse estimate that bounds integrals over the sets $\{F_i\}_{i=1}^{n_K}$ that form the partitions of $\partial K$, $K\in\Ta$, by the respective integrals over the sub-elements $K_{F_i}$. 
\begin{lemma}\label{th_LpFinv_poly}
    Let $q\in(0,\infty)$ and let $K\in\Ta$ be a Lipschitz domain, satisfying Assumption \ref{starshaped}. Then, for each $F_i\subset\partial K$, $i=1,\dotso,n_K$ and each $v\in\Pa_r(K)$, we have
    \begin{equation}\label{LpFinv_poly}
        \int_{F_i}|v|^q\d s \leq 2^{q+1}C_{\inv,d}\frac{2r^2 + 1}{\displaystyle\min_{F_i}(\vec{m}_i\cdot\vec{n})}\int_{K_{F_i}}|v|^q\d\vec{x}.
    \end{equation}
\end{lemma}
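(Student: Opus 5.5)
We propose to use polar-type coordinates centred at $\vec{x}_i^0$ and reduce to the one-dimensional estimate \eqref{1Dinv} along rays. Since $K_{F_i}$ is star-shaped with respect to $\vec{x}_i^0$ and $\vec{m}_i\cdot\vec{n}>0$ on $F_i$, every point $\vec{y}\in F_i$ is joined to $\vec{x}_i^0$ by the segment $\{\vec{x}_i^0+t(\vec{y}-\vec{x}_i^0):\ t\in[0,1]\}\subset K_{F_i}$. We parametrise the cone
\[
\Ca_i=\{\vec{x}_i^0+t(\vec{y}-\vec{x}_i^0):\ \vec{y}\in F_i,\ t\in(0,1]\}\subset K_{F_i}
\]
by $(\vec{y},t)$. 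The standard cone-volume formula gives $\d\vec{x}=t^{d-1}(\vec{m}_i(\vec{y})\cdot\vec{n}(\vec{y}))\d t\d s(\vec{y})$; this is where the positivity condition (2) of Assumption \ref{starshaped} enters, through the Jacobian.

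Fix $\vec{y}\in F_i$ and consider the univariate polynomial $g_{\vec{y}}(t)=v(\vec{x}_i^0+t(\vec{y}-\vec{x}_i^0))\in\Pa_r(0,1)$. We want to bound $|g_{\vec{y}}(1)|^q$ by an integral of $|g_{\vec{y}}|^q$ against the weight $t^{d-1}$. The weight degenerates at $t=0$, so we restrict to $t\in[1/2,1]$, where $t^{d-1}\ge 2^{1-d}$. Applying \eqref{1Dinv} on $[a,b]=[1/2,1]$ yields
\[
|g_{\vec{y}}(1)|^q\le 2^{q+2}(2r^2+1)\int_{1/2}^1|g_{\vec{y}}(t)|^q\d t\le 2^{q+1+d}(2r^2+1)\int_{1/2}^1|g_{\vec{y}}(t)|^q t^{d-1}\d t .
\]
This constant $2^{q+1+d}$ matches $2^{q+1}C_{\inv,d}$ for $d=2,3$ ($C_{\inv,d}=4,8$). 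For $d=1$ no weight appears, and the constant $2^{q+2}$ there would have to be improved to $2^{q+1}$, for instance by taking the interval of length one, or simply treated separately since $F_i$ is then a point. If one prefers, the bound can be stated with a factor of two slack in $d=1$.

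Next we multiply by $\vec{m}_i(\vec{y})\cdot\vec{n}(\vec{y})\ge\min_{F_i}(\vec{m}_i\cdot\vec{n})$ and integrate over $\vec{y}\in F_i$. The right-hand side becomes $2^{q+1}C_{\inv,d}(2r^2+1)\int_{\Ca_i}|v|^q\d\vec{x}\le 2^{q+1}C_{\inv,d}(2r^2+1)\int_{K_{F_i}}|v|^q\d\vec{x}$. The left-hand side is at least $\min_{F_i}(\vec{m}_i\cdot\vec{n})\int_{F_i}|v|^q\d s$, and dividing by this minimum gives \eqref{LpFinv_poly}.

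The main obstacle is rigorously justifying the change of variables for a curved Lipschitz $F_i$. We would first verify the Jacobian formula for flat facets and then pass to Lipschitz $F_i$ via the area formula, using that $\vec{m}_i\cdot\vec{n}>0$ ensures injectivity of the map $(\vec{y},t)\mapsto\vec{x}_i^0+t(\vec{y}-\vec{x}_i^0)$. We would also check that $\Ca_i\subset K_{F_i}$, which follows from star-shapedness. The polynomial part is fully handled by \eqref{1Dinv}, which remains valid for all $q\in(0,\infty)$.
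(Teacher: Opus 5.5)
Your proof is correct, and it takes a genuinely different route from the paper's. The paper does not integrate along rays. For $q>1$ it adapts the divergence-theorem argument of \cite[Lemma 4.4]{CanDongGeo2021}, with $|v|^q$ in place of $|v|^2$. For $q\in(0,1]$, where $\operatorname{div}(|v|^q\vec{m}_i)$ need not be integrable, it does the following: it partitions $F_i$ into small curved simplices $F_i^j$, inscribes straight simplices $\underline{K}_i^j$ in the sub-cones $K_i^j$, and applies the divergence theorem to the regularised field $(v^2+|\kappa_i^j|)^{q/2}\vec{m}_i$ on the slivers $\kappa_i^j=K_i^j\setminus\underline{K}_i^j$. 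It then uses the $d$-dimensional simplicial estimate \eqref{LpFinv} on each $\underline{K}_i^j$ and passes to the limit $N\to\infty$.

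Your disintegration $\d\vec{x}=t^{d-1}(\vec{m}_i\cdot\vec{n})\d t\d s$ reduces everything to the one-dimensional estimate \eqref{1Dinv} on $[1/2,1]$. Since that estimate holds for all $q\in(0,\infty)$, you need no case distinction, no regularisation and no limiting procedure. In particular you do not have to control the accumulated sliver errors $\sum_j\lambda(\kappa_i^j)$, or the ratio of $\max_{\underline{F}_i^j}$ to $\min_{F_i^j}$ of $\vec{m}_i\cdot\vec{n}$. You still obtain the constant $2^{q+1}C_{\inv,d}$. For $d=1$, your first suggestion, applying \eqref{1Dinv} on $[0,1]$ since $t^0\equiv 1$, gives exactly $2^{q+1}$, so no slack is needed.

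The price is the measure-theoretic step you flag: the area formula for the Lipschitz map $(\vec{y},t)\mapsto\vec{x}_i^0+t(\vec{y}-\vec{x}_i^0)$, with multiplicity one almost everywhere. This follows from star-shapedness of $K_{F_i}$ together with $\vec{m}_i\cdot\vec{n}>0$. A point of differentiability of $F_i$ with $\vec{m}_i\cdot\vec{n}>0$ is the farthest point of $\overline{K_{F_i}}$ on its ray, so coincidences only occur over a null subset of $F_i$. The bound should also be stated with $\min_{F_i}$ read as an essential infimum. The paper, in exchange, uses \eqref{LpFinv} as a black box within the established framework of \cite{CanDongGeo2021}.

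As a by-product, your argument applied to a straight simplex, with $\vec{x}_i^0$ the vertex opposite $F$ (so $\vec{m}_i\cdot\vec{n}=d|K|/|F|$), reproves Lemma~\ref{th_norm_invest} from \eqref{1Dinv} alone. Truncating at $t=(d-1)/d$ rather than $t=1/2$ improves $C_{\inv,3}=8$ to $27/4$.
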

\begin{proof} 
 The proof for $q\in(1,\infty)$ follows broadly that of \cite[Lemma 4.4]{CanDongGeo2021} for $q = 2$ (We note in passing that \cite[Lemma 4.4]{CanDongGeo2021} gives a slightly smaller constant than (7.2) for $q = 2$). In particular, the only significant departures are replacing $|v|^2$ by $|v|^q$ and using \eqref{LpFinv} instead of a standard $L^2$-norm trace inverse estimate.
 
 The case $q\in(0,1]$, however, requires additional considerations. This is because $\operatorname{div}(|v|^q\vec{m}_i)$ may not be integrable for $q\in(0,1]$. To that end, following \cite[Lemma 4.4]{CanDongGeo2021}, we partition $F_i$ into $N$ $(d-1)$-dimensional curved simplices $F_i^j$, $j=1,\dotso,N$, which are subordinate to the vertices possibly contained in $F_i$; we assume that $N$ is large enough for this requirement to be fulfilled. 
 We consider a partition of $K_{F_i}$ into, possibly curved, simplices $K_i^j$ with one face $F_i^j$ and the remaining vertex being $\vec{x}_i^0$, connected to the other vertices with straight/planar faces; this is possible, due to the star-shapedness of $K_{F_i}$, with respect to $\vec{x}_i^0$. Note that each $F_i^j$ may include at most one constituent face of $F_i$, or part thereof.

    Let now $\Tilde{F}_i^j$ denote the straight/planar face connecting the $d-1$ vertices of $F_i^j$. Let also $\underline{K}_i^j$ be the largest straight-faced simplex contained in $K_i^j$, with a face $\underline{F}_i^j$ parallel to $\Tilde{F}_i^j$ and the remaining faces being subsets of the straight faces of $K_i^j$; we refer to Figure \ref{fig:curved_inv_est} for an illustration.
    
    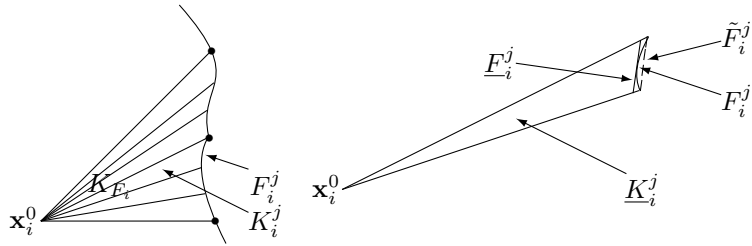
\begin{figure}[h]
			\centering
			\setlength{\unitlength}{2.3cm}
			\begin{picture}(0,1.2)
				\put(-1,0){\line(1,1){.99}}
				\put(-1,0){\line(1,0){1}}
				
				\put(-1,0){\line(3,2){.96}}
				\put(-1,0){\line(5,4){1}}
				\put(-1,0){\line(2,1){.97}}
				\put(-1,0){\line(3,1){.93}}
				\put(-1,0){\line(6,1){.95}}
				\put(-.055, .46){\vx}
				\put(-.04, .96){\vx}
				\put(-.02, -.02){\vx}

				\qbezier(.07, -.13)(-0.15, 0.3)(-.03, .49)
				\cbezier(-.03, .49)(-0.1, 0.8)(0.2, 0.8)(-.2, 1.25)
				\put(-.6,.2){\makebox(0,0){$\color{black} K_{F_i}$}}
				\put(.3,.0){\makebox(0,0){$\color{black} K_i^j$}}
				\put(.2,.05){\vector(-2,1){.5}}
				\put(.3,.2){\makebox(0,0){$\color{black} F_i^j$}}
				\put(.2,.3){\vector(-3,1){.25}}
				\put(-1.1,0){\makebox(0,0){$\color{black} \vec{x}^0_i$}}
			\end{picture}
			\setlength{\unitlength}{4.2cm}
			\begin{picture}(1.1,0)(-1.35,-.1)
				\put(-1,0){\line(2,1){.97}}
				\put(-1,0){\line(3,1){.94}}
				\qbezier(-.056, .312)(-0.088, 0.38)(-.03, .483)
				
				\put(-.056, .312){\line(1,7){.005}}
				\put(-.051, .36){\line(1,7){.005}}
				\put(-.045, .41){\line(1,7){.005}}
				\put(-.039, .45){\line(1,7){.005}}
				
				\put(-.079, .305){\line(1,7){.024}}
				
				\put(-.05,.0){\makebox(0,0){$\color{black} \underline{K}_i^j$}}
				\put(-.1,.05){\vector(-2,1){.35}}
				\put(.25,.28){\makebox(0,0){$\color{black} F_i^j$}}
				\put(.2,.3){\vector(-3,1){.26}}
				\put(.25,.48){\makebox(0,0){$\color{black} \tilde{F}_i^j$}}
				\put(.2,.47){\vector(-4,-1){.235}}
				\put(-.5,.4){\makebox(0,0){$\color{black} \underline{F}_i^j$}}
				\put(-.45,.4){\vector(7,-1){.37}}
				\put(-1.05,0){\makebox(0,0){$\color{black} \vec{x}^0_i$}}
			\end{picture}
			
			\caption{(Reproduction of \cite[Figure 4]{CanDongGeo2021}). Partitioned curved sub-element $K_{F_i}\subset K\in\Ta$; \vx denotes a vertex of $K$ (left); detail with $\underline{K}_i^j$ and faces $\tilde{F}_i^j$ and $\underline{F}_i^j$ (right).}
			\label{fig:curved_inv_est}
		\end{figure}
    Let also, $\kappa_i^j = K_i^j \setminus \underline{K}_i^j$ and assume for the moment, that $\kappa_i^j\neq\varnothing$. Since each $F_i$ is image of a finite number of Lipschitz functions locally, we set $L$ be the Lipschitz constant of a parametrization of $F_i^j$ with respect to $\underline{F}_i^j$, giving $|F_i^j|\leq L |\underline{F}_i^j|$. Also, $|\kappa_i^j|\leq L\operatorname{diam}(\underline{F}_i^j)|\underline{F}_i^j|$, as the maximum Euclidean distance between $\underline{F}_i^j$ and $\Tilde{F}_i^j$ is bounded from above by $L\operatorname{diam}(\underline{F}_i^j)$. Therefore, $|\kappa_i^j|\to 0$ as $N\to\infty$, faster than $|F_i^j|$, by an order of $L\operatorname{diam}(\underline{F}_i^j)$.

    Now, from the Divergence Theorem we have 
    \begin{align}\label{divtheo}
        \int_{\kappa_i^j}\operatorname{div}\big[ (v^2 + & |\kappa_i^j|)^{q/2}\vec{m}_i\big]\d\vec{x} 
        = \int_{F_i^j}(v^2 + |\kappa_i^j|)^{q/2}\vec{m}_i\cdot\vec{n}\d s + \int_{\underline{F}_i^j}(v^2 + |\kappa_i^j|)^{q/2}\vec{m}_i\cdot\vec{n}_{\underline{F}_i^j}\d s,
    \end{align}
    with $\vec{n}_\omega$ denoting the outward unit normal vector of a domain $\omega$, upon observing that $\vec{m}_i\cdot\vec{n}_{\partial\kappa_i^j}=0$ on $\partial\kappa_i^j\setminus(F_i^j\cup\underline{F}_i^j)$. The product rule and standard estimation yield
    \begin{align*}
        \int_{\kappa_i^j}\operatorname{div}\big[ (v^2 + |\kappa_i^j|)^{q/2}\vec{m}_i\big]\d\vec{x} 
        \leq &\ q\|\vec{m}_i\|_{L^\infty(\kappa_i^j)}\|\nabla v\|_{L^\infty(\kappa_i^j)}\|(v^2 + |\kappa_i^j|)^{(q-1)}\|_{L^\infty(\kappa_i^j)}|\kappa_i^j| \\
        &+ d(\|v\|_{L^\infty(\kappa_i^j)}^2 + |\kappa_i^j|)^{q/2}|\kappa_i^j|,
    \end{align*}
  as $\operatorname{div}\vec{m}_i=d$. Since $q\in(0, 1]$, we have 
   $ 
        \|(v^2 + |\kappa_i^j|)^{(q-1)/2}\|_{L^\infty(\kappa_i^j)}|\kappa_i^j| 
        \leq |\kappa_i^j|^{(1+q)/2}
    $. 
    Thus, from the above relations we infer
    \begin{align}\label{divchain}
        \min_{F_i^j}(\vec{m}_i\cdot\vec{n})\int_{F_i^j}|v|^q\d s &\leq \min_{F_i^j}(\vec{m}_i\cdot\vec{n})\int_{F_i^j}(v^2 + |\kappa_i^j|)^{q/2}\d s \notag \\
        &\leq q\|\vec{m}_i\|_{L^\infty(\kappa_i^j)}\|\nabla v\|_{L^\infty(\kappa_i^j)}|\kappa_i^j|^{1/2+q/2} + d(\|v\|_{L^\infty(\kappa_i^j)}^2 + |\kappa_i^j|)^{q/2}|\kappa_i^j| \notag\\
        &\quad + \Big| \int_{\underline{F}_i^j}(v^2 + |\kappa_i^j|)^{q/2}\vec{m}_i\cdot\vec{n}_{\underline{F}_i^j}\d s \Big|.
    \end{align}
    Now, since $(v^2 + |\kappa_i^j|)^{q/2}\le |v|^q + |\kappa_i^j|^{q/2}$ the last term can be further bounded as follows
    \begin{align}\label{breaklast_combo}
        \Big| \int_{\underline{F}_i^j}(v^2 + |\kappa_i^j|)^{q/2}\vec{m}_i\cdot\vec{n}_{\underline{F}_i^j}\d s \Big|
        \leq \max_{\underline{F}_i^j}(\vec{m}_i\cdot\vec{n}_{\underline{F}_i^j})\int_{\underline{F}_i^j}|v|^q\d s + \max_{\underline{F}_i^j}(\vec{m}_i\cdot\vec{n}_{\underline{F}_i^j})|\underline{F}_i^j||\kappa_i^j|^{q/2}.
    \end{align}
   For the face integral at the last step of \eqref{breaklast_combo}, we apply the trace-inverse estimate \eqref{LpFinv}, to obtain
    \begin{equation} \label{straight_face_inv}
        \int_{\underline{F}_i^j}|v|^q\d s \leq 2^{q+1}C_{\inv,d}\frac{(2r^2 + 1)|\underline{F}_i^j|}{d|\underline{K}_i^j|}\int_{\underline{K}_i^j}|v|^q\d\vec{x}.
    \end{equation}
    We combine \eqref{divchain}, \eqref{breaklast_combo} and \eqref{straight_face_inv}, to get
    \begin{equation}\label{before_limit}
        \int_{F_i^j}|v|^q\d s \leq 2^{q+1}C_{\inv,d} \frac{\max_{\underline{F}_i^j}(\vec{m}_i\cdot\vec{n}_{\underline{F}_i^j})(2r^2 + 1)|\underline{F}_i^j|}{d\min_{F_i^j}(\vec{m}_i\cdot\vec{n})|\underline{K}_i^j|}\int_{\underline{K}_i^j}|v|^q\d\vec{x} + \lambda(\kappa_i^j),
    \end{equation}
    where
    \begin{equation*}
        \lambda(\kappa_i^j) = \frac{\max_{\underline{F}_i^j}(\vec{m}_i\cdot\vec{n}_{\underline{F}_i^j})|\underline{F}_i^j|}{\min_{F_i^j}(\vec{m}_i\cdot\vec{n})}|\kappa_i^j|^{q/2} + q\|\vec{m}_i\|_{L^\infty(\kappa_i^j)}\|\nabla v\|_{L^\infty(\kappa_i^j)}|\kappa_i^j|^{(1+q)/2} + d(\|v\|_{L^\infty(\kappa_i^j)}^2 + |\kappa_i^j|)^{q/2}|\kappa_i^j|.
    \end{equation*}

    On the other hand, if $\kappa_i^j=\varnothing$, then we have $F_i^j = \underline{F}_i^j$, since $F_i^j$ is not curved, and so with $\lambda(\kappa_i^j) = 0$. Thus, \eqref{before_limit} follows immediately from \eqref{straight_face_inv}.
    
   As $N\to\infty$, we have $|\kappa_i^j|\to 0$ and, hence,
      $  \lambda(\kappa_i^j)\to 0$ and  $\max_{\underline{F}_i^j}(\vec{m}_i\cdot\vec{n}_{\underline{F}_i^j})/\min_{F_i^j}(\vec{m}_i\cdot\vec{n}) \to 1$.
    Thus, for all $\epsilon>0$, we can choose $N\in\mathbb{N}$ large enough to satisfy
    \begin{align*}
        \int_{F_i^j}|v|^q\d s &\leq 2^{q+1}C_{\inv,d} (1 + \epsilon) \frac{(2r^2 + 1)|\underline{F}_i^j|}{d|\underline{K}_i^j|}\int_{\underline{K}_i^j}|v|^q\d\vec{x}
        \leq 2^{q+1}C_{\inv,d} (1 + \epsilon) \frac{(2r^2 + 1)|F_i^j|}{d|\underline{K}_i^j|}\int_{K_i^j}|v|^q\d\vec{x},
    \end{align*}
    since $\underline{F}_i^j\subset F_i^j$ and $\underline{K}_i^j\subset K_i^j$. Moreover, the divergence theorem shows
    \begin{equation*}
        d|K_i^j| = \int_{K_i^j}\operatorname{div}\vec{m}_i\d\vec{x} = \int_{F_i^j}\vec{m}_i\cdot\vec{n}\d s \geq \min_{F_i^j}(\vec{m}_i\cdot\vec{n})|F_i^j|,
    \end{equation*}
    or 
    \begin{equation*}
        \frac{|F_i^j|}{d|\underline{K}_i^j|}  \leq \frac{1}{\min_{F_i^j}(\vec{m}_i\cdot\vec{n})} \Big( 1 + \frac{|\kappa_i^j|}{|\underline{K}_i^j|} \Big) \leq \frac{1 + \epsilon}{\min_{F_i^j}(\vec{m}_i\cdot\vec{n})},
    \end{equation*}
    for all $\epsilon>0$, when $N$ is large enough. Combining the above, we deduce
    \begin{align*}
        \int_{F_i}|v|^q\d s &= \sum_{j=1}^N \int_{F_i^j}|v|^q\d s \leq 2^{q+1}C_{\inv,d} (1 + \epsilon)^2 \sum_{j=1}^N \frac{2r^2 + 1}{\min_{F_i^j}(\vec{m}_i\cdot\vec{n})}\int_{K_i^j}|v|^q\d\vec{x} \\ 
        &\leq 2^{q+1}C_{\inv,d} (1 + \epsilon)^2 \frac{2r^2 + 1}{\min_{F_i}(\vec{m}_i\cdot\vec{n})}\int_{K_{F_i}}|v|^q\d\vec{x}.
    \end{align*}
       
    The above inequality holds for all $\epsilon>0$; thus, the proof of the case $q\in(0,1]$ is complete.
\end{proof}

\subsection{An auxiliary $L^\infty\to L^q$ inverse estimate.}\label{DGpoly_looinv}
We now prove an inverse estimate bounding $\|v\|_{L^\infty(\hat\Kk)}$ of a polynomial $v$ by $\|v\|_{L^q(\hat\Kk)}$ for $q\in(0,\infty)$, over, so-called, generalized reference prisms $\hat{\Kk}$, defined below.

\begin{defin}\label{grp}
    Let $\hat{F}^0:=[0,1]^{d-1}\subset\mathbb{R}^d$ and $\phi:\hat{F}^0\to\mathbb{R}$ be a Lipschitz continuous function. A generalized reference prism is a domain $\hat\Kk=\hat\Kk_\phi\subset\mathbb{R}^d$, defined by
    \begin{equation*}
        \hat\Kk := \{(\vec{x},x_d)\in\hat{F}^0\times\mathbb{R}: \text{ } 0 \leq x_d \leq \phi(\vec{x})\},
    \end{equation*}
    such that: 1) $[0,1]^d\subset\hat\Kk$, and 2) the flat base $\hat{F}^0$ is star-shaped with respect to the curved base $\hat{F} = \{(\vec{x},\phi(\vec{x}))\in\mathbb{R}^d: \text{ } \vec{x}\in\hat{F}^0\}$.
\end{defin}
Since $[0,1]^d\subset\hat\Kk$, we immediately infer that $\phi(\vec{x})\geq1$, for all $\vec{x}\in\hat{F}^0$.

We now show that the simplicial trace-inverse estimate \eqref{LpFinv} implies a polynomial inverse estimate on reference generalized prisms.

\begin{lemma}\label{th_LinfLq}
    Let $\hat\Kk$ a generalized reference prism, $q\in(0,\infty)$ and $r\in\mathbb{N}_0$. Then, for $v\in\Pa_r(\hat\Kk)$, we have 
    \begin{equation}\label{LinfLq}
        \|v\|_{L^\infty(\hat\Kk)}^q \leq 2^{d(q+1)} C_{\infty,d}(2r^2 + 1)^d\|v\|_{L^q(\hat\Kk)}^q,
    \end{equation}
    with $C_{\infty,d}=4,32$, for $d=2,3$, respectively.
\end{lemma}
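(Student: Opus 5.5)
The plan is to reduce everything to the one-dimensional estimate \eqref{1Dinv} (equivalently Lemma \ref{th_infinv}), applied once per coordinate direction inside a cube contained in $\hat\Kk$. The constant $2^{d(q+1)}(2r^2+1)^d$ is exactly a $d$-fold product of the one-dimensional constant $2^{q+1}(2r^2+1)$ on unit-length intervals; the remaining factor $C_{\infty,d}$ will absorb the geometry.

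First I locate a maximizer. Let $\vec{x}^*\in\hat\Kk$ satisfy $|v(\vec{x}^*)|=\|v\|_{L^\infty(\hat\Kk)}$. Next I choose an axis-parallel box $Q\subset\hat\Kk$ containing $\vec{x}^*$ with all side lengths bounded below by a fixed constant, say $1/2$. If $\vec{x}^*\in[0,1]^d$, take $Q=[0,1]^d$. Otherwise $\vec{x}^*=(\vec{x}',x_d^*)$ with $1<x_d^*\le\phi(\vec{x}')$. Here I use that $\phi\geq1$ and that $\hat F^0$ is star-shaped with respect to $\hat F$. The segment from the top point to the base lies in $\hat\Kk$, and I want a cube of side $1/2$ (or a sheared version of the unit cube, as with the map $G$ in Lemma \ref{th_norm_invest}) with one vertex at $\vec{x}^*$ and contained in $\hat\Kk$. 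I expect the cleanest route is to replace axis-parallel boxes by parallelepipeds built from the segment joining $\vec{x}^*$ to a base point together with $d-1$ horizontal directions. Because the map is affine, polynomials remain polynomials of the same degree along each edge direction.

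On such a parallelepiped $P=\{\vec{x}^*+\sum_k t_k \vec{a}_k:\ t\in[0,1]^d\}$ (after flipping signs so that it lies in $\hat\Kk$), I iterate \eqref{1Dinv}. Applying it in $t_1$ at the point $t_1=0$ gives
\[
|\tilde v(0,t_2,\dots)|^q\le 2^{q+1}(2r^2+1)\int_0^1|\tilde v(t_1,t_2,\dots)|^q\d t_1 .
\]
Integrating that bound and applying \eqref{1Dinv} again in $t_2$, and so on, gives
\[
|v(\vec{x}^*)|^q\le 2^{d(q+1)}(2r^2+1)^d\int_{[0,1]^d}|\tilde v|^q\d t=\frac{2^{d(q+1)}(2r^2+1)^d}{|\det A|}\int_P|v|^q\d\vec{x}.
\]
This is valid for every $q>0$ because \eqref{1Dinv} is. Finally, $\int_P\le\int_{\hat\Kk}$, and a lower bound on $|\det A|$ produces $C_{\infty,d}$. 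The values $4=2^2$ in $d=2$ and $32=2^5$ in $d=3$ suggest halving or shearing in the construction, for instance subdivisions analogous to the $[0,1/2]$ pieces in the proof of Lemma \ref{th_norm_invest}.

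The main obstacle is the geometric step: ensuring that a parallelepiped of volume bounded below independently of $\phi$ fits inside $\hat\Kk$ with a vertex at an arbitrary maximizer high above the unit cube, where $\phi$ may be large and only Lipschitz. My first attempt will use the star-shapedness condition so that the vertical (or slanted) segment from $\vec{x}^*$ down to $\hat F^0$ lies in $\hat\Kk$. I will then translate the unit base cube's horizontal faces along it, giving a sheared prism whose cross-sections are $(d-1)$-cubes of side $1/2$ on which the earlier polynomial slices act. If the full prism is not contained in $\hat\Kk$, I will fall back to iterating in the $x_d$ direction only down to height $1$ and then in the horizontal directions within $[0,1]^d$. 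That fallback may cost a factor depending on $\phi$, which I would then try to remove by a scaling of the vertical interval, using the fact that \eqref{1Dinv} on an interval of length $\ell$ carries a factor $1/\ell$.
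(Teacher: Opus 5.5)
You handle the case $\vec{x}^*\in[0,1]^d$ exactly as the paper does, and your tensor-product iteration of \eqref{1Dinv} on an affine parallelepiped is sound. The gap is the geometric step for $\vec{x}^*\notin[0,1]^d$. You flag it yourself as the main obstacle, but you leave it unresolved, and the constructions you suggest do not work.

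Star-shapedness of $\hat F^0$ with respect to $\hat F$ only guarantees that the pyramid $\operatorname{conv}(\hat F^0\cup\{\vec{x}^*\})$ lies in $\hat\Kk$ (apply it at the point of $\hat F$ vertically above $\vec{x}^*$). In general it gives nothing more near $\vec{x}^*$. For example, take $d=2$ and $H>1$. Then $\hat\Kk=[0,1]^2\cup\operatorname{conv}\{(0,0),(1,0),(1/2,H)\}$ is an admissible generalized reference prism, and $v(\vec{x})=x_2$ attains its maximum only at the apex $(1/2,H)$. Near that apex $\hat\Kk$ is a cone whose directions all have negative $x_d$-component. Hence every edge of a parallelepiped with vertex $\vec{x}^*$ inside $\hat\Kk$ must point strictly downwards. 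A box with horizontal edges at $\vec{x}^*$ is therefore not contained in $\hat\Kk$, and neither is your sheared prism whose horizontal $(d-1)$-cubes of side $1/2$ reach up to height $x_d^*$.

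The fallback does not close the gap either. Bounding $v(\vec{x}^*)$ by values at heights in $[0,1]$ is an extrapolation, and it costs a Chebyshev-type factor that grows with $x_d^*$, hence with $\phi$. Using instead the whole vertical segment $\{\vec{x}'\}\times[0,x_d^*]$ forces the horizontal iterations onto cross-sections of width $1-t/x_d^*$, which degenerate as $t\to x_d^*$. The $1/\ell$ scaling of the vertical interval addresses neither issue.

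The missing idea is to use slanted edges pointing from $\vec{x}^*$ towards base vertices. The paper does this by combining \eqref{1Dinv} along the segment from a base vertex to $\vec{x}^{\max}$ with the simplicial trace-inverse estimate \eqref{LpFinv}. It applies \eqref{LpFinv} on a triangle, and then a tetrahedron, with apex $\vec{x}^{\max}$ inside the pyramid. The face measures cancel, and the volume $x_d^{\max}/d!\ge 1/d!$ produces $C_{\infty,d}$.

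Your own route can also be completed directly. Set $\vec{z}_1=\vec{0}$, $\vec{z}_{i+1}=\vec{e}_i$ for $i=1,\dots,d-1$, and $\vec{a}_i=(\vec{z}_i-\vec{x}^*)/d$. Every point $\vec{x}^*+\sum_i t_i\vec{a}_i$ with $t\in[0,1]^d$ is then a convex combination of $\vec{x}^*,\vec{z}_1,\dots,\vec{z}_d$, so it lies in the pyramid and hence in $\hat\Kk$. Moreover $|\det(\vec{a}_1,\dots,\vec{a}_d)|=x_d^*/d^d\ge d^{-d}$. Your iteration then gives \eqref{LinfLq} with $d^d$ in place of $C_{\infty,d}$, i.e.\ $4$ and $27\le 32$. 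This uses only one-dimensional estimates and no simplicial trace inequality.
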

\begin{proof}
    Let $\vec{x}^{\max}\in\hat\Kk$, such that $\|v\|_{L^\infty(\hat\Kk)} = |v(\vec{x}^{\max})|$. We distinguish to two cases: $\vec{x}^{\max}\in[0,1]^d\subset\hat\Kk$, and $\vec{x}^{\max}\in\hat\Kk\setminus[0,1]^d$. For $\vec{x}^{\max}\in[0,1]^d$, we apply \eqref{LpFinv} in one dimension for the $d$-component of $\vec{x^{\max}}$, so that
    \begin{equation*}
        |v(\vec{x}^{\max})|^q \leq 2^{q+1} (2r^2 + 1)\int_0^1|v(x_1^{\max},\dotso,x_{d-1}^{\max},x_d)|^q\d x_d.
    \end{equation*}
    Repeating the same process iteratively, for each component $x_j^{\max}$, we deduce
    \begin{equation*}
        \|v\|_{L^\infty(\hat\Kk)}^q = |v(\vec{x}^{\max})|^q \leq 2^{d(q+1)} (2r^2 + 1)^d\|v\|_{L^q([0,1]^d)}^q \leq 2^{d(q+1)} (2r^2 + 1)^d\|v\|_{L^q(\hat\Kk)}^q,
    \end{equation*}
    which completes the proof for the first case.

    If $\vec{x}^{\max}\in\hat\Kk\setminus[0,1]^d$, let $\hat\Kk^{\max}$ to be the pyramid with base $\hat{F}^0 = [0,1]^{d-1}$, and the remaining vertex being $\vec{x}^{\max}$. For $d=2$, we define $\hat{F}^{\max}$ to be the line segment with endpoints $(0,0)$ and $\vec{x}^{\max}$. 
    Thus, \eqref{LpFinv} for $d=1$ implies
    \begin{equation*}
        |v(\vec{x}^{\max})|^q \leq 2^{q+1}\frac{2r^2+1}{|\hat{F}^{\max}|}\|v\|_{L^q(\hat{F}^{\max})}^q.
    \end{equation*}
    At the same time, $\hat{F}^{\max}$ is a face of the two-dimensional simplex $\hat\Kk^{\max}$, and hence, we apply \eqref{LpFinv} in two dimensions, to obtain
    \begin{equation*}
        \|v\|_{L^q(\hat{F}^{\max})}^q \leq 2^{q+3}\frac{(2r^2+1)|\hat{F}^{\max}|}{2|\hat{\Kk}^{\max}|}\|v\|_{L^q(\hat\Kk^{\max})}^q.
    \end{equation*}
    The above two inequalities show 
    \begin{equation*}
        |v(\vec{x}^{\max})|^q \leq 4\cdot2^{2(q+1)}\frac{(2r^2+1)^2}{2|\hat{\Kk}^{\max}|}\|v\|_{L^q(\hat\Kk^{\max})}^q.
    \end{equation*}
    Now, simple calculations show that $|\hat{\Kk}^{\max}| = x_2^{\max}/2\geq1/2$, and hence, we combine the above inequality and the fact that $\hat\Kk^{\max}\subset\hat\Kk$, to show
    \begin{equation*}
        |v(\vec{x}^{\max})|^q \leq 4\cdot2^{2(q+1)}(2r^2+1)^2\|v\|_{L^q(\hat\Kk)}^q,
    \end{equation*}
    which is the desired estimate for $d=2$.

    For $d=3$, we set $\hat{S}_2^{\max}$ to be the triangle with vertices $(0,0,0)$, $(0,1,0)$ and $\vec{x}^{\max}$. Working exactly as in the two-dimensional case, we infer
    \begin{equation*}
        |v(\vec{x}^{\max})|^q \leq 4\cdot2^{2(q+1)}C_\inv^2\frac{(2r^2+1)^2}{2|\hat{S}_2^{\max}|}\|v\|_{L^q(\hat{S}_2^{\max})}^q.
    \end{equation*}
    We now consider the tetrahedron $\hat{S}_3^{\max}\subset\hat\Kk^{\max}$, with vertices $(0,0,0)$, $(0,1,0)$, $(1,0,0)$ and $\vec{x}^{\max}$. We observe that $\hat{S}_3^{\max}$ is a three-dimensional simplex and $\hat{S}_2^{\max}$ is one of its faces. Thus, we apply \eqref{LpFinv} in three dimensions, to obtain
    \begin{equation*}
        \|v\|_{L^q(\hat{S}_2^{\max})}^q \leq 8\cdot2^{q+1}\frac{(2r^2+1)|\hat{S}_2^{\max}|}{3|\hat{S}_3^{\max}|}\|v\|_{L^q(\hat{S}_3^{\max})}^q \leq 8\cdot2^{q+1}\frac{(2r^2+1)|\hat{S}_2^{\max}|}{3|\hat{S}_3^{\max}|}\|v\|_{L^q(\hat\Kk^{\max})}^q.
    \end{equation*}
    The above two inequalities show
    \begin{equation*}
        |v(\vec{x}^{\max})|^q \leq 32\cdot2^{3(q+1)}\frac{(2r^2+1)^3}{6|\hat{S}_3^{\max}|}\|v\|_{L^q(\hat\Kk^{\max})}^q.
    \end{equation*}
    Again, some straightforward calculations show that $|\hat{S}_3^{\max}|=x_3^{\max}/6\geq1/6$, and hence, under the observation that $\hat\Kk^{\max}\subset\hat\Kk$, we have
    \begin{equation*}
        \|v\|_{L^\infty(\hat\Kk)}^q = |v(\vec{x}^{\max})|^q \leq 32\cdot2^{3(q+1)}(2r^2+1)^3\|v\|_{L^q(\hat\Kk)}^q.
    \end{equation*}
    The last inequality completes the proof.
\end{proof}

\subsection{Stability under domain perturbations.}\label{DGpoly_perturb}
We will also need an $L^q$-stability estimate under perturbations of a generalized prism $\hat\Kk$; this is an extension of \cite[Lemma 4.14]{CanDongGeo2021} for $q\neq2$.

\begin{lemma}\label{th_pert_stab}
    Let $\hat\Kk$ be a reference generalized prism, $\hat\Kk_\epsilon=\hat\Kk\cap(\hat\Kk - \epsilon\vec{e}_d)$, where $V - a = \{v - a: \text{ } v\in V\}$, for any set $V$, $a\in V$, and $\vec{e}_d = (0,\dotso,0,1)$. Then, for all $r\in\mathbb{N}_0$, $v\in\Pa_r(\hat\Kk)$ and $0<\epsilon \leq (2^{q+2}(2r^2+1))^{-1}$, we have
    \begin{equation}\label{pert_stab}
        \frac{1}{2}\|v\|_{L^q(\hat\Kk)}^q \leq \|v\|_{L^q(\hat\Kk_\epsilon)}^q.
    \end{equation}
\end{lemma}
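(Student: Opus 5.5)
The approach is to decompose $\hat\Kk$ into $\hat\Kk_\epsilon$ and the thin layer $\hat\Kk\setminus\hat\Kk_\epsilon$, bound the layer contribution by a small multiple of $\|v\|_{L^q(\hat\Kk)}^q$ using the one-dimensional inverse estimate \eqref{1Dinv} along vertical fibres, and absorb it.

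First we identify the layer. Since $\hat\Kk$ is a region of the form $0\le x_d\le\phi(\vec{x})$ over $\hat F^0$, the translate $\hat\Kk-\epsilon\vec{e}_d$ is $\{-\epsilon\le x_d\le\phi(\vec{x})-\epsilon\}$. Hence
\[
\hat\Kk_\epsilon=\{(\vec{x},x_d):\ \vec{x}\in\hat F^0,\ 0\le x_d\le \phi(\vec{x})-\epsilon\}
\]
(nonempty because $\phi\ge1>\epsilon$), and
\[
\hat\Kk\setminus\hat\Kk_\epsilon=\{\phi(\vec{x})-\epsilon<x_d\le\phi(\vec{x})\},
\]
so each vertical fibre of the layer has length exactly $\epsilon$.

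Next comes the fibre-wise estimate. Fix $\vec{x}\in\hat F^0$ and set $w(t)=v(\vec{x},t)\in\Pa_r(0,\phi(\vec{x}))$. By \eqref{1Dinv} on $[a,b]=[0,\phi(\vec{x})]$,
\[
\sup_{t}|w(t)|^q\le \frac{2^{q+1}(2r^2+1)}{\phi(\vec{x})}\int_0^{\phi(\vec{x})}|w|^q\,\d t\le 2^{q+1}(2r^2+1)\int_0^{\phi(\vec{x})}|w|^q\,\d t,
\]
using $\phi\ge1$. Therefore
\[
\int_{\phi(\vec{x})-\epsilon}^{\phi(\vec{x})}|w|^q\,\d t\le \epsilon\,2^{q+1}(2r^2+1)\int_0^{\phi(\vec{x})}|w|^q\,\d t\le \tfrac12\int_0^{\phi(\vec{x})}|w|^q\,\d t,
\]
by the assumption $\epsilon\le(2^{q+2}(2r^2+1))^{-1}$. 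Integrating over $\vec{x}\in\hat F^0$ with Fubini gives $\|v\|_{L^q(\hat\Kk\setminus\hat\Kk_\epsilon)}^q\le\frac12\|v\|_{L^q(\hat\Kk)}^q$. Writing $\|v\|^q_{L^q(\hat\Kk)}=\|v\|^q_{L^q(\hat\Kk_\epsilon)}+\|v\|^q_{L^q(\hat\Kk\setminus\hat\Kk_\epsilon)}$ and rearranging yields \eqref{pert_stab}. This works for all $q\in(0,\infty)$, since only additivity of the integral is used and not the triangle inequality.

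The main point needing care is the geometric description of $\hat\Kk_\epsilon$: the fibre structure must be genuinely vertical with the full interval $[0,\phi(\vec{x})]$ present, which Definition \ref{grp} guarantees. Beyond that, the key observation is that the length normalisation in \eqref{1Dinv} can be discarded because $\phi\ge1$, so the constant does not depend on $\phi$. This is exactly why the threshold on $\epsilon$ is independent of the shape of $\hat\Kk$.
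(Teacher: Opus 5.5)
Your proof is correct and is essentially the paper's argument: you bound the layer $\hat\Kk\setminus\hat\Kk_\epsilon$ fibre by fibre, using that each vertical fibre of the layer has length $\epsilon$ and applying \eqref{1Dinv} on the full fibre of length $\phi(\vec{x})\ge 1$, then integrate over $\hat F^0$ via Fubini and absorb using additivity of $\|\cdot\|_{L^q}^q$, which is valid for all $q\in(0,\infty)$. Your write-up is slightly cleaner than the paper's: its displayed chain drops the factor $\epsilon$ in the last step and writes an exponent $2$ in place of $q$, whereas you carry the factor $\epsilon$ through correctly.
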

\begin{proof}
Set $\ell_{{\bf x},\epsilon}:=\hat{\Kk}\cap (\ell_{{\bf x}}-\epsilon {\rm e}_d) $. Then, Fubini's Theorem and \eqref{1Dinv} for $d=1$, give, respectively,
			\begin{equation}\label{strip_small_gen}
				\begin{aligned}
					\|v\|_{L^q(\hat{\Kk}\backslash \hat{\Kk}_\epsilon)}^q 
					\le 
					\epsilon\int_{\hat{F}^0}\|v\|_{L^\infty(\ell_{{\bf x}}\backslash \ell_{{\bf x},\epsilon})} ^q\d {\bf x}
					\le 
					\epsilon\int_{\hat{F}^0}\|v\|_{L^\infty(\ell_{{\bf x}})}^q\d {\bf x} \le 2^{(q+1)} (2r^2 + 1)\|v\|_{L^q(\hat\Kk)}^q,
				\end{aligned}
			\end{equation}
			 since the length of $\ell_{{\bf x}}$ is bounded from below by one. Then, for the range of $\epsilon$ in the statement, the result follows upon observing that $\|v\|_{L^q(\hat{\Kk})}^q -	\|v\|_{L^q(\hat{\Kk}_\epsilon)}^2=
			\|v\|_{L^q(\hat{\Kk}\backslash \hat{\Kk}_\epsilon)}^q $.
\end{proof}

\subsection{Trace-inverse estimates on general elements.}\label{DGpoly_geninv}
It is possible that the constant in the inverse estimate in Lemma \ref{th_LpFinv_poly} becomes arbitrarily large if $\vec{m}_i\cdot \vec{n}\to 0$ locally. Very small $\vec{m}_i\cdot \vec{n}>0$ can occur in cases of locally highly variable element boundaries. As we shall see below, this translates in large discontinuity-penalization parameter locally, which in turn may result into to conditioning issues, or even loss of approximation.

To address this, we introduce the notion of $(q,r)$-coverability that generalizes the concept of $p$-coverability introduced in \cite{CanDongGeoHoustSB}: $p$-coverability in \cite{CanDongGeoHoustSB} corresponds to $(2,r)$-coverability herein.

\begin{defin}
    An element $K\in\Ta$ is said to be $(q,r)$-coverable, if there exists a family $\{\hat\Kk_j\}_{j=1}^{N_K}$ of reference generalized prisms and corresponding affine maps $\vec{\Phi}_j$, such that the mapped generalized prisms $\overline{K}_j = \vec{\Phi}_j(\hat\Kk_j)$, $j=1,\dotso,N_K$, form a, possibly overlapping, covering of K, such that
    \begin{equation}\label{pcov}
        \operatorname{dist}(\partial\overline{K}_j, K) \leq \left(2^{q+2}(2r^2 + 1)\right)^{-1}\hh_{\overline{K}_j} \text{ and} \quad  |\overline{K}_j| \geq c_{as} |K|,
    \end{equation}
    for all $j=1,\dotso,N_K$, where $\hh_{\overline{K}_j} = \sup_{\vec{x}\in\hat{F}^0}|\vec{\Phi}_j(\ell_{\vec{x},j})|$, $c_{as}$ is a positive constant, independent of $K$ and $\Ta$, $\operatorname{dist}(\partial\overline{K}_j,K)=\sup_{\vec{x}\in\partial\overline{K}_j}\inf_{\vec{y}\in K}|\vec{x} - \vec{y}|$, and $\ell_{\vec{x},j} = \hat\Kk_j\cap\{\vec{x}+\alpha\vec{e}_d: \text{ } \alpha\in\mathbb{R}\}$.
\end{defin}

Thus, if $K$ is $(q,r)$-coverable, then \eqref{pcov} constructs a covering of mapped generalized prisms $\overline{K}_j$ and respective sub-prisms $\underline{K}_j=\overline{K}_{j,\epsilon}$, with $\overline{K}_{j,\epsilon}$ defined as in Lemma \ref{th_pert_stab} for $0<\epsilon\leq (2^{q+2}(2r^2 + 1))^{-1}\hh_{\overline{K}_j}$, such that $\underline{K}_j\subset K$. Then, Lemma \ref{th_pert_stab} shows for any $v\in\Pa_r(K)$, that 
\begin{equation}\label{rgp_to_K}
    \frac{1}{2}\|v\|_{L^q(\overline{K}_j)}^q \leq \|v\|_{L^q(\underline{K}_j)}^q \leq \|v\|_{L^q(K)}^q.
\end{equation}
This observation is crucial for controlling the trace-inverse estimate constant and, subsequently, preventing the possibility of over-penalization.

We are now in position to derive an inverse estimate for elements that satisfy Assumption \ref{starshaped}.

\begin{lemma}\label{th_LpFinv_poly_gen}
    Let $K\in\Ta$ satisfy Assumption \ref{starshaped} and let $\{F_i\}_{i=1}^{n_K}$ be the respective subdivision of $\partial K$. Then, for all $q\in(0,\infty)$, $r\in\mathbb{N}_0$, $v\in\Pa_r(K)$ and $i=1,\dotso,n_K$, we have
    \begin{equation}\label{LpFinv_poly_gen}
        \int_{F_i}|v|^q \d s \leq \Ca_\INV(q,r,K,F_i)\frac{(2r^2+1)|F_i|}{|K|}\int_K|v|^q \d \vec{x},
    \end{equation}
    where 
    \begin{equation}\label{C_inv}
        \Ca_\INV(q,r,K,F_i) := \begin{cases}
            \min\{\Ca_\mathrm{reg}(q,K,F_i), \Ca_\mathrm{cov}(q,r)\}, & K \text{ is } (q,r)-\text{coverable}; \\
            \Ca_\mathrm{reg}(q,K,F_i), & \text{otherwise},
        \end{cases}
    \end{equation}
    $\Ca_\mathrm{reg}(q,K,F_i) = 2^{q+1}C_{\inv,d}|K|/(|F_i|\min_{F_i}(\vec{m}_i\cdot\vec{n}))$ and $\Ca_\mathrm{cov}(q,r) = 2^{d(q+1)+1}C_{\infty,d}c_{as}^{-1}(2r^2 + 1)^{d-1}$.
\end{lemma}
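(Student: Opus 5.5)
The bound is the minimum of two independent estimates, so I will prove each separately: a ``regular'' bound with constant $\Ca_\mathrm{reg}$, valid for every element satisfying Assumption \ref{starshaped}, and a ``covering'' bound with constant $\Ca_\mathrm{cov}$, valid when $K$ is $(q,r)$-coverable. Taking the smaller of the two right-hand sides then gives \eqref{LpFinv_poly_gen} with $\Ca_\INV$ as in \eqref{C_inv}.

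\emph{Regular bound.} Start from Lemma \ref{th_LpFinv_poly} and enlarge the domain of integration using $K_{F_i}\subset K$:
\[
\int_{F_i}|v|^q\d s\le 2^{q+1}C_{\inv,d}\frac{2r^2+1}{\min_{F_i}(\vec{m}_i\cdot\vec{n})}\int_{K}|v|^q\d\vec{x}.
\]
Then multiply and divide by $|F_i|/|K|$. The resulting constant is exactly $\Ca_\mathrm{reg}(q,K,F_i)\,(2r^2+1)|F_i|/|K|$, so this step is only bookkeeping.

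\emph{Covering bound.} The plan is to pass through the $L^\infty$ norm:
\[
\int_{F_i}|v|^q\d s\le |F_i|\,\|v\|_{L^\infty(K)}^q.
\]
Since the family $\{\overline{K}_j\}$ covers $K$, there is an index $j$ with $\|v\|_{L^\infty(K)}\le\|v\|_{L^\infty(\overline{K}_j)}$. Pull back to the reference prism via $\vec{\Phi}_j$; polynomial degree is preserved because $\vec{\Phi}_j$ is affine. Apply Lemma \ref{th_LinfLq} on $\hat\Kk_j$; the affine scaling converts $\|\hat v\|^q_{L^q(\hat\Kk_j)}$ into $|\hat\Kk_j|\,|\overline{K}_j|^{-1}\|v\|^q_{L^q(\overline{K}_j)}$. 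This gives
\[
\|v\|_{L^\infty(\overline{K}_j)}^q\le 2^{d(q+1)}C_{\infty,d}(2r^2+1)^d\,\frac{|\hat\Kk_j|}{|\overline{K}_j|}\,\|v\|_{L^q(\overline{K}_j)}^q .
\]

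Next, use \eqref{rgp_to_K}, which is Lemma \ref{th_pert_stab} combined with the distance condition in \eqref{pcov}, to get $\|v\|^q_{L^q(\overline{K}_j)}\le 2\|v\|^q_{L^q(K)}$. Then use $|\overline{K}_j|\ge c_{as}|K|$. Together these give
\[
\int_{F_i}|v|^q\d s\le 2^{d(q+1)+1}C_{\infty,d}\,c_{as}^{-1}(2r^2+1)^d\,|\hat\Kk_j|\,\frac{|F_i|}{|K|}\int_K|v|^q\d\vec{x}.
\]
Splitting off one factor of $(2r^2+1)$ leaves the constant $\Ca_\mathrm{cov}(q,r)$, times the factor $|\hat\Kk_j|$.

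\emph{Main obstacle.} The delicate step is the reference-volume factor $|\hat\Kk_j|$ in the scaling of Lemma \ref{th_LinfLq}. It does not appear in $\Ca_\mathrm{cov}$ and must be absorbed. I would try to control it by normalizing the covering prisms, in the same way as the $(2,r)$-coverability setting of \cite{CanDongGeo2021}: after rescaling $\vec{\Phi}_j$ (and $\phi$), one can arrange $|\hat\Kk_j|\lesssim 1$, or equivalently absorb this factor into $c_{as}$. Failing that, I would rerun the proof of Lemma \ref{th_LinfLq} directly on $\overline{K}_j$. That proof actually bounds $|v(\vec{x}^{\max})|^q$ by an average over a sub-simplex of volume comparable to $|\overline{K}_j|$, which yields the estimate with $|\overline{K}_j|^{-1}$ and no $|\hat\Kk_j|$ at all. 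Finally, I would check that $\epsilon$ in Lemma \ref{th_pert_stab} scales with $\hh_{\overline{K}_j}$ under $\vec{\Phi}_j$, which justifies applying \eqref{rgp_to_K} on the mapped prism.
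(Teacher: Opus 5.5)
Your argument is the paper's argument step for step. The regular bound is Lemma~\ref{th_LpFinv_poly} together with $K_{F_i}\subset K$. The coverable bound passes through $|F_i|\,\|v\|_{L^\infty}^q$, then the prism estimate \eqref{LinfLq} transported by $\vec{\Phi}_j$, then \eqref{rgp_to_K} and $|\overline{K}_j|\ge c_{as}|K|$.

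The factor you isolate is genuine, and the paper's proof does not remove it either. \eqref{yes_pcov2} is written with $(2r^2+1)^d/|\overline{K}_j|$, whereas the change of variables gives $(2r^2+1)^d/|\det D\vec{\Phi}_j|=(2r^2+1)^d|\hat\Kk_j|/|\overline{K}_j|$, exactly as you found. Definition~\ref{grp} bounds $\phi$ only from below, through $[0,1]^d\subset\hat\Kk_j$. Hence $|\hat\Kk_j|\ge 1$, with equality only for $\hat\Kk_j=[0,1]^d$. It is also unbounded over admissible prisms: for $d=2$ and $\phi(x_1)=1+(H-1)x_1$ one has $|\hat\Kk_j|=(H+1)/2$ with $\min\phi=1$, so no rescaling in $x_d$ is available. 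As it stands, your argument proves the coverable bound only with $\Ca_{\mathrm{cov}}(q,r)$ multiplied by $\max_j|\hat\Kk_j|$.

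Your remedies are not equivalent, and only one of them closes the gap.
\begin{itemize}
\item Normalizing to $|\hat\Kk_j|\lesssim 1$ needs an extra shape assumption, for instance a bound on $\max\phi_j/\min\phi_j$. Its hidden constant would still multiply $\Ca_{\mathrm{cov}}$.
\item Rerunning the proof of Lemma~\ref{th_LinfLq} on $\overline{K}_j$ is the same scaling in disguise. That proof averages either over the image of the unit cube or over the image of a simplex with apex $\vec{x}^{\max}$ whose reference volume is proportional to $x_d^{\max}$. Neither volume need be comparable to $|\overline{K}_j|$ when $|\hat\Kk_j|\gg 1$.
\item Absorbing the factor into $c_{as}$ is what works, but only as a strengthened hypothesis. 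Require $|\det D\vec{\Phi}_j|=|\vec{\Phi}_j([0,1]^d)|\ge c_{as}|K|$, which implies $|\overline{K}_j|\ge c_{as}|K|$. Your chain then gives exactly $\Ca_{\mathrm{cov}}(q,r)$.
\end{itemize}

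The $\epsilon$-check you postpone runs into the same normalization issue. $\hh_{\overline{K}_j}$ is defined through the longest mapped fibre. So the admissible shift corresponds, in reference coordinates, to $(2^{q+2}(2r^2+1))^{-1}\max\phi_j$. Lemma~\ref{th_pert_stab} covers only shifts up to $(2^{q+2}(2r^2+1))^{-1}$, because its proof uses fibre length at least one. Hence \eqref{rgp_to_K}, which you and the paper both take as given, also tacitly presumes a normalized prism.
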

\begin{proof}
    Whether or not $K$ is $(q,r)$-coverable, Lemma \ref{th_LpFinv_poly} implies 
    \begin{equation}\label{not_pcov}
        \|v\|_{L^q(F_i)}^q \leq 2^{q+1}C_{\inv,d}\frac{2r^2+1}{\min_{F_i}(\vec{m}_i\cdot\vec{n})}\|v\|_{L^q(K_{F_i})}^q \leq \Ca_\mathrm{reg}(q,K,F_i)\frac{(2r^2+1)|F_i|}{|K|}\|v\|_{L^q(K)}^q.
    \end{equation}
    On the other hand, if $K$ is $(q,r)$-coverable, then $K_{F_i}\subset K \subset \cup_{j=1}^{N_K}\overline{K}_j$, and hence,
    \begin{equation}\label{yes_pcov1}
        \|v\|_{L^q(F_i)}^q \leq |F_i|\|v\|_{L^\infty(F_i)}^q \leq |F_i|\|v\|_{L^\infty(K_{F_i})}^q \leq |F_i|\max_{1\leq j \leq N_K}\|v\|_{L^\infty(\overline{K}_j)}^q.
    \end{equation}
    Now, for $j=1,\dotso,N_K$, we apply the $L^\infty\to L^q$ inverse estimate \eqref{LinfLq} together with a scaling argument, and then \eqref{rgp_to_K} together with \eqref{pcov}, to obtain
    \begin{equation}\label{yes_pcov2}
        \|v\|_{L^\infty(\overline{K}_j)}^q \leq 2^{d(q+1)}C_{\infty,d}\frac{(2r^2 + 1)^d}{|\overline{K}_j|}\|v\|_{L^q(\overline{K}_j)}^q \leq 2^{d(q+1)+1}C_{\infty,d}\frac{(2r^2 + 1)^d}{c_{as}|K|}\|v\|_{L^q(K)}^q
    \end{equation}
    The combination of \eqref{yes_pcov1} and \eqref{yes_pcov2} implies
    \begin{equation}\label{yes_pcov}
        \|v\|_{L^q(F_i)}^q \leq \Ca_\mathrm{cov}(q,r)\frac{(2r^2+1)|F_i|}{|K|}\|v\|_{L^q(K)}^q,
    \end{equation}
    and the proof is complete after taking the minimum of the right-hand sides of \eqref{not_pcov} and \eqref{yes_pcov}.
\end{proof}

Note that the constant in \eqref{LpFinv_poly_gen} depends on the points $\vec{x}_i^0$ corresponding to each $F_i\subset\partial K$, through $\vec{m}_i$ and it can be further reduced by optimizing the location of $\vec{x}_i^0$ for a given partition $\{F_i\}_{i=1}^{n_K}$, in accordance to Assumption \ref{starshaped}. For certain shapes, e.g., a ball, we can compute the constant explicitly. In practice, we typically consider convenient $\vec{x}_i^0$'s without seeking to perform optimization. 

For the analysis of the robust IPDG method below, 
we require an extension of Lemma \ref{th_QN_invest} for arbitrarily-shaped elements $K$. An inspection of the proof of Lemma \ref{th_QN_invest} reveals that replacing the trace-inverse estimate used in \eqref{invweight} by \eqref{LpFinv_poly_gen} results into the following quasi-norm trace inverse estimate for general element shapes.
\begin{lemma}\label{th_QN_invest_poly}
    Let $p\in(2,\infty)\cap\mathbb{Q}$, and let $r\in\mathbb{N}$. Then, for $w,v\in\Pa_r(K)$, we have
    \begin{equation}\label{QN_invest_poly}
        \int_{F_i}\left( |\nabla w| + |\nabla v| \right)^{p-2}|\nabla v|^2\d s \leq \Ca_\mathrm{Q}^p(r,K,F_i)\frac{(2(r-1)^2+1)|F_i|}{|K|}\int_K\left( |\nabla w| + |\nabla v| \right)^{p-2}|\nabla v|^2\d \vec{x},
    \end{equation}
    where
     $
        \Ca_\mathrm{Q}^p(r,K,F_i) := 2^{p/2-1}\Ca_\INV(1/k_p,pk_p(r-1),K,F_i)$.
\qed
\end{lemma}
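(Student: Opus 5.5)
The plan is to repeat the proof of Lemma \ref{th_QN_invest} verbatim, replacing only the simplicial trace-inverse estimate \eqref{LpFinv} by the general-element estimate \eqref{LpFinv_poly_gen} of Lemma \ref{th_LpFinv_poly_gen}. Write $p = 2+2k_p/\ell_p$ with $k_p,\ell_p\in\mathbb{N}$. The first step is the elementary bound $(a+b)^{p-2}\le 2^{p/2-1}(a^2+b^2)^{(p-2)/2}$ for $a,b\ge0$. This gives
\[
\int_{F_i}\left( |\nabla w| + |\nabla v| \right)^{p-2}|\nabla v|^2\d s \leq 2^{p/2-1}\int_{F_i}\Big|\left( |\nabla w|^2 + |\nabla v|^2 \right)^{\ell_p}|\nabla v|^{2k_p}\Big|^{1/k_p}\d s ,
\]
where we use $(p-2)/2 = k_p/\ell_p$ in the form that the quantity inside the absolute value, raised to the power $1/k_p$, reproduces $(|\nabla w|^2+|\nabla v|^2)^{(p-2)/2}|\nabla v|^2$. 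The exponent bookkeeping should be checked exactly as in the simplicial proof, since the statement inherits that lemma's convention.

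The second step is the key observation that $P:=\left( |\nabla w|^2 + |\nabla v|^2 \right)^{\ell_p}|\nabla v|^{2k_p}$ is a genuine polynomial on $K$. Indeed, $|\nabla w|^2$ and $|\nabla v|^2$ are polynomials of degree at most $2(r-1)$, so $P$ has degree at most $pk_p(r-1)\in\mathbb{N}_0$, matching the count used in Lemma \ref{th_QN_invest}. Since $\nabla w,\nabla v$ are polynomials on all of $\mathbb{R}^d$, $P\in\Pa_{pk_p(r-1)}(K)$ regardless of the shape of $K$, which is all that Lemma \ref{th_LpFinv_poly_gen} requires. We therefore apply \eqref{LpFinv_poly_gen} with $q=1/k_p\in(0,1]$ and degree $pk_p(r-1)$. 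It is essential here that Lemmas \ref{th_LpFinv_poly} and \ref{th_LpFinv_poly_gen} were proved for all $q\in(0,\infty)$, including the non-norm range $q\le1$. This yields the bound
\[
\Ca_\INV(1/k_p,pk_p(r-1),K,F_i)\frac{(2p^2k_p^2(r-1)^2+1)|F_i|}{|K|}\int_K |P|^{1/k_p}\d\vec{x}.
\]

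The final step undoes the algebra on $K$: $|P|^{1/k_p}$ equals the same expression as before, and $(a^2+b^2)^{(p-2)/2}\le(a+b)^{p-2}$ returns the integrand $(|\nabla w|+|\nabla v|)^{p-2}|\nabla v|^2$. It then remains to absorb $2p^2k_p^2(r-1)^2+1 \le p^2k_p^2\,(2(r-1)^2+1)$ and collect constants into $\Ca_\mathrm{Q}^p$. I expect the main obstacle to be this constant bookkeeping. The factor $p^2k_p^2$ is presumably meant to be absorbed, as in \eqref{GKF}, rather than appear in $\Ca_\mathrm{Q}^p$ as literally written. If the stated $\Ca_\mathrm{Q}^p$ does not include it, I would make it explicit or read $\Ca_\INV$ as already accounting for it. Likewise, the $2^{1/k_p}$ factor present in the simplicial version is hidden inside $\Ca_\INV$ through its $2^{q+1}$ and $2^{d(q+1)+1}$ prefactors. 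No analytic difficulty beyond Lemma \ref{th_LpFinv_poly_gen} arises.
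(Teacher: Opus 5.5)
Your route is the paper's own. The paper obtains this lemma by rerunning the proof of Lemma~\ref{th_QN_invest} with \eqref{LpFinv_poly_gen} in place of \eqref{LpFinv}, with $q=1/k_p$ and degree $pk_p(r-1)$. You are also right that the validity of Lemma~\ref{th_LpFinv_poly} for $q\in(0,1]$ is what makes this possible. However, the two bookkeeping points you defer need to be fixed, because one of your steps is false as written.

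\textbf{The exponent convention.} Set $A=|\nabla w|^2+|\nabla v|^2$ and $B=|\nabla v|^2$. Then $|A^{\ell_p}B^{k_p}|^{1/k_p}=A^{\ell_p/k_p}B$. This equals $A^{(p-2)/2}B$ only if $(p-2)/2=\ell_p/k_p$, not $k_p/\ell_p$ as you (and the stated convention $p=2+2k_p/\ell_p$) assert. Likewise, $\deg(A^{\ell_p}B^{k_p})\le 2(k_p+\ell_p)(r-1)$ coincides with $pk_p(r-1)$ only under that relation, and only then is $pk_p(r-1)$ guaranteed to be an integer. For example, $p=8/3$, $k_p=1$, $\ell_p=3$ read literally gives $A^3B$ instead of $A^{1/3}B$, and the non-integer degree $\tfrac83(r-1)$. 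This is a typo inherited from Lemma~\ref{th_QN_invest}. Everything closes, with $\Ca_\mathrm{Q}^p$ exactly in the stated form, once you take $p=2+2\ell_p/k_p$, so that $k_p$ is the denominator, as Remark~\ref{rem_k} indicates.

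\textbf{The missing factor $p^2k_p^2$.} You are right that the argument produces the factor $2p^2k_p^2(r-1)^2+1$. Writing the bound with $2(r-1)^2+1$ therefore requires the extra factor $p^2k_p^2$ in $\Ca_\mathrm{Q}^p$, as \eqref{GKF} does in the simplicial case. Your fallback of reading this factor as already contained in $\Ca_\INV$ is not available. $\Ca_\INV$ may equal $\Ca_\mathrm{reg}$, for instance whenever $K$ is not coverable, and $\Ca_\mathrm{reg}$ does not depend on the polynomial degree at all. So the constant must be made explicit; the lemma as literally stated is not what the argument proves.
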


\subsection{Polynomial approximation on curved polytopic elements.}\label{DG_poly_approx}
We consider $hp$-version best approximation error estimates that will be used to extract convergence rates for \eqref{IPDG} for essentially arbitrarily-shaped elements. The setting echoes and extends that of \cite{{CanDongGeoHoustSB},CanDongGeo2021}.

\begin{defin}\label{def_cover}
    We consider a, possibly overlapping, covering $\Ta^{cov}$ of $\Ta$, consisting of open shape-regular simplices, such that for all $K\in\Ta$, there exists a simplex $\Ka\in\Ta^{cov}$, such that $K\subset\Ka$. We also define the covering domain $\Omega^{cov}\supset\Omega$ via $\overline\Omega^{cov}=\cup_{\Ka\in\Ta^{cov}}\overline{\Ka}$.
\end{defin}

From now on, we impose the following assumption on the mesh $\Ta$, for the extraction of $hp$-version polynomial approximation results.

\begin{assumption}\label{assumpt_overlap}
    We assume that there exists a covering $\Ta^{cov}$ of $\Ta$ as per Definition \ref{def_cover}, and a constant $M_\Omega^{cov}\in\mathbb{N}$, such that 
    \begin{equation*}
        \max\operatorname{card}\{K'\in\Ta: K'\cap\Ka\neq\varnothing \text{ } \forall \Ka\in\Ta^{cov} \text{ such that } K \subset \Ka \} \leq M_\Omega^{cov}.
    \end{equation*}
    We further assume that $h_\Ka=\operatorname{diam}(\Ka) \leq C_\mathrm{diam}h_K$ for all $K\in\Ta$, $\Ka\in\Ta^{cov}$ with $K\subset\Ka$. Here, $C_\mathrm{diam}$ is assumed to be independent of the mesh parameters.
\end{assumption}

The purpose of Assumption \ref{assumpt_overlap} is to facilitate the use of  $hp$-version best approximation errors into general domains, viz., employ the simplices $\Ka\supset K$, $K\in\Ta$, using bounds like \eqref{proj_elem}.
However, if $K\in\Ta$ is close enough to the boundary $\partial\Omega$, part of the corresponding element $\Ka\supset K$ will lie outside of the domain $\Omega$. For this reason, an extension of $v$ in $\Omega^{cov}$ will be necessary. 

\begin{theorem}[\cite{Stein1970}]\label{th_ext}
    Let $\Omega\subset\mathbb{R}^d$ be a domain with Lipschitz boundary. Then, there exists a bounded linear extension operator $\Ea:W^{s,p}(\Omega)\to W^{s,p}(\mathbb{R}^d)$, $s\in\mathbb{N}_0$, $p\in[2,\infty)$, such that 
       $ \Ea v|_\Omega = v$, and   
        $\|\Ea v\|_{W^{s,p}(\mathbb{R}^d)} \leq C_\Ea\|v\|_{W^{s,p}(\Omega)}$,
    where $C_\Ea$ depends only on $s$, $p$ and $\Omega$.
\end{theorem}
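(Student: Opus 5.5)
The statement is the Stein extension theorem for Lipschitz domains. We do not need to reprove it; we only need to check that the cited construction covers every parameter range the paper uses, namely integer $s\in\mathbb{N}_0$ and $p\in[2,\infty)$. The plan is to reproduce the structure of Stein's argument and check that each step is insensitive to $p$ in this range.

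\emph{Step 1 (localisation).} Cover $\partial\Omega$ by finitely many open sets $U_1,\dots,U_N$. In each $U_j$, after a rigid motion, $\Omega\cap U_j$ is the supergraph $\{x_d>\varphi_j(x')\}$ of a Lipschitz function $\varphi_j$. Add an interior open set $U_0\Subset\Omega$ and take a smooth partition of unity $\{\psi_j\}$ subordinate to the cover. Each piece $\psi_j v$ then lies in $W^{s,p}$ of a special Lipschitz domain, with norm at most a constant times $\|v\|_{W^{s,p}(\Omega)}$. The constant depends only on $s$ and on the $C^s$ norms of the $\psi_j$, hence only on $\Omega$.

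\emph{Step 2 (special Lipschitz domains).} On $\{x_d>\varphi(x')\}$ we use Stein's regularised distance $\delta^*(x)$, which is comparable to $\varphi(x')-x_d$ below the graph and satisfies $|\partial^\alpha\delta^*|\le C_\alpha(\delta^*)^{1-|\alpha|}$. Define
\[
\Ea v(x)=\int_1^\infty v\bigl(x',x_d+\lambda\delta^*(x)\bigr)\,\psi(\lambda)\,\mathrm{d}\lambda
\]
for points below the graph, and $\Ea v=v$ in the domain. Here $\psi$ is continuous, decays faster than any power, and satisfies $\int_1^\infty\psi=1$ and $\int_1^\infty\lambda^k\psi=0$ for all $k\ge1$.

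\emph{Step 3 (estimates and gluing).} Bounding $\|\partial^\alpha \Ea v\|_{L^p}$ for $|\alpha|\le s$ is the main technical step. Derivatives falling on $\delta^*$ produce singular factors $(\delta^*)^{1-|\alpha|}$. These are handled with the vanishing moments of $\psi$, via a Taylor expansion in the last variable, together with Hardy's inequality in the $x_d$ direction. Both tools are valid uniformly for $1\le p<\infty$, so $p\ge2$ causes no difficulty. For smooth $v$ one also checks that $\Ea v$ is $C^{s}$ across the graph. Finally, define $\Ea v=\sum_j\Ea_j(\psi_j v)$ with the local operators transported back by the rigid motions, extend by density, and note that linearity and $\Ea v|_\Omega=v$ are immediate. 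The resulting constant $C_\Ea$ depends only on $s$, $p$ and the Lipschitz character of $\Omega$.

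In the paper itself, this amounts to citing \cite{Stein1970}, Chapter VI, Theorem 5, which gives a single operator valid for all $s$ and $p$ simultaneously.
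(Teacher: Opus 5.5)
Your overall route is the paper's: the paper gives no argument and simply cites Stein's universal extension operator \cite{Stein1970}. Your Steps 1--3 follow that construction: the regularised distance, the kernel $\psi$ with $\int_1^\infty\psi=1$ and $\int_1^\infty\lambda^k\psi=0$ for $k\ge1$, and Taylor expansion in $x_d$ plus a Hardy-type inequality, all valid for every $p\in[1,\infty)$. Your finite cover tacitly uses compactness of $\partial\Omega$, which is fine since $\Omega$ is bounded in the paper.

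One step of your outline fails as written: the gluing $\Ea v=\sum_j\Ea_j(\psi_jv)$ together with the claim that $\Ea v|_\Omega=v$ is immediate. Each $\Ea_j$ is the extension for the \emph{global} supergraph $D_j=\{x_d>\varphi_j(x')\}$. So $\Ea_j(\psi_jv)$ is given by the integral formula on all of $\mathbb{R}^d\setminus\overline{D_j}$, including points of $\Omega$ outside $U_j$ that lie below the extended graph, and it need not vanish there.

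To see this, take $x$ at depth $\tau=\varphi_j(x')-x_d$. The ray $x_d+\lambda\delta^*(x)$, $\lambda\ge1$, starts at height $\delta^*(x)-\tau\sim\tau$ above the graph. Hence $\Ea_j(\psi_jv)$ is generically nonzero down to a depth comparable to the height that $\operatorname{supp}\psi_j$ reaches above the graph. Suppose $U_j$ reaches less far below the graph than above it, and $\Omega$ re-enters below, e.g.\ across a thin wall of $\mathbb{R}^d\setminus\Omega$ separating two arms of $\Omega$. Then $\Ea_j(\psi_jv)(x)\neq0=(\psi_jv)(x)$ at points $x\in\Omega\setminus U_j$, and in general $\Ea v\neq v$ there.

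There are two ways to repair this.
\begin{itemize}
\item[(a)] Localise after extending. Pick $\chi_j\in C_c^\infty(U_j)$ with $\chi_j=1$ on $\operatorname{supp}\psi_j$, and set $\Ea v=\psi_0v+\sum_{j\ge1}\chi_j\,\Ea_j(\psi_jv)$. On $\Omega\cap U_j=D_j\cap U_j$ one has $\chi_j\Ea_j(\psi_jv)=\chi_j\psi_jv=\psi_jv$, and outside $U_j$ the term vanishes. The $W^{s,p}$ bound only picks up the $C^s$-norms of the $\chi_j$, which depend on $\Omega$ alone.
\item[(b)] Use symmetric coordinate cylinders together with the normalisation $\delta^*\ge2(\varphi_j-x_d)$. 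Some such lower bound with a factor strictly larger than one is needed anyway, beyond mere comparability, so that every sampled point lies in $D_j$. This choice keeps $\operatorname{supp}\Ea_j(\psi_jv)$ inside $U_j$, but it has to be arranged and checked, not taken as immediate.
\end{itemize}
With either correction your outline is a valid account of the cited theorem. Since the paper rests purely on the citation, the statement itself is unaffected.
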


We are now in position to state the relevant polynomial approximation results.

\begin{lemma}\label{th_polyap_polyg}
    Let $K\in\Ta$ satisfy Assumption \ref{starshaped} and let the covering simplex $\Ka\in\Ta^{cov}$ with $K\subset\Ka$ satisfy Assumption \ref{assumpt_best_app}. Let $v\in W^{\vec{s},p}(\Omega;\Ta)$, for some $\vec{s}=(s_K: K\in\Ta)$, $2 \leq s_K \leq r_K+1$, and suppose that $\Ea v|_\Ka\in W^{s_K,p}(\Ka)$. Then, there exists approximation $\Pi_{r_K}v\in\Pa_{r_K}(K)$, such that 
    \begin{equation}\label{hp_polyap_polyg}
        \|v - \Pi_{r_K}v\|_{W^{m,p}(K)} \leq C\frac{h_K^{s_K-m}}{r_K^{s_K-m-\beta_m}}\|\Ea v\|_{W^{s_K,p}(\Ka)}, \text{ } 0 \leq m \leq 2 \leq s \leq r_K + 1,
    \end{equation}
    \begin{equation}\label{hp_LpF}
        \|v - \Pi_{r_K}v\|_{L^p(F_i)} \leq \Ca_{ap}^{L^p}(r_K,K,F_i)\frac{h_K^{s_K-1/p}}{r_K^{s_K-1/p}}\|\Ea v\|_{W^{s_K,p}(\Ka)},
    \end{equation}
    and
    \begin{equation}\label{hp_W1pF}
        \|\nabla(v - \Pi_{r_K}v)\|_{L^p(F_i)} \leq \Ca_{ap}^{W^{1,p}}(r_K,K,F_i)\frac{h_K^{s_K-1-1/p}}{r_K^{s_K-1-1/p}}\|\Ea v\|_{W^{s_K,p}(\Ka)},
    \end{equation}
    with 
    $
        \Ca_{ap}^{L^p}(r_K,K,F_i) = C\min\big\{ \big(\min_{F_i}(\vec{m}_i\cdot\vec{n})\big)^{-1/p}r_K^{\beta_0+(\beta_1-\beta_0)/p}h_K^{1/p}, |F_i|^{1/p}r_K^{d/2-1/p}h_K^{(1-d)/p} \big\},
    $
    and  \\  
    $
        \Ca_{ap}^{W^{1,p}}(r_K,K,F_i) = C\min\big\{ \big(\min_{F_i}(\vec{m}_i\cdot\vec{n})\big)^{-1/p}r_K^{\beta_1+(\beta_2-\beta_1)/p}h_K^{1/p}, |F_i|^{1/p}r_K^{d-1/p}h_K^{(1-d)/p} \big\};
    $
    here, $F_i\subset\partial K$ are as in Assumption \ref{starshaped}, $\beta_0$, $\beta_1$ and $\beta_2$ are the exponents from Assumption \ref{assumpt_best_app} and $C$ is a constant, depending only on $s_K$, $p$, $\Omega$, $C_\mathrm{diam}$ and the shape-regularity constant of $\Ka$.
\end{lemma}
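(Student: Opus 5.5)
Define $\Pi_{r_K}v := (\Pi_{r_K}(\Ea v|_\Ka))|_K$, where on the right $\Pi_{r_K}$ is the simplicial approximation of Assumption \ref{assumpt_best_app} on the covering simplex $\Ka\supset K$. Then \eqref{hp_polyap_polyg} is immediate. Since $K\subset\Ka$ and $\Ea v|_K=v$, we have $\|v-\Pi_{r_K}v\|_{W^{m,p}(K)}\le\|\Ea v-\Pi_{r_K}\Ea v\|_{W^{m,p}(\Ka)}$. Applying \eqref{proj_elem} on $\Ka$ together with $h_\Ka\le C_{\rm diam}h_K$ from Assumption \ref{assumpt_overlap} gives the bound, with the constant absorbing $C_{\rm diam}^{s_K-m}$.

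For the face bounds \eqref{hp_LpF} and \eqref{hp_W1pF}, I would prove each of the two entries in the $\min$ separately and then take the smaller one.

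\emph{First entry.} Set $\eta=v-\Pi_{r_K}v$, or $\eta=\partial_j(v-\Pi_{r_K}v)$ for the gradient bound. Apply the multiplicative trace inequality \eqref{multitr} with $q=p$ on $K_{F_i}\subset K$, using $\|\vec m_i\|_{L^\infty}\le h_K$:
\[
\|\eta\|_{L^p(F_i)}^p\le \big(\min_{F_i}\vec m_i\cdot\vec n\big)^{-1}\|\eta\|_{L^p(K)}^{p-1}\big(d\|\eta\|_{L^p(K)}+p\,h_K\|\nabla\eta\|_{L^p(K)}\big).
\]
Insert \eqref{hp_polyap_polyg} with $m=0,1$ (respectively $m=1,2$). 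The $L^p$-term contributes $h_K^{p s_K}r_K^{-p(s_K-\beta_0)}$, and the mixed term contributes
\[
h_K^{(p-1)s_K}r_K^{-(p-1)(s_K-\beta_0)}\cdot h_K\,h_K^{s_K-1}r_K^{-(s_K-1-\beta_1)} = h_K^{ps_K}r_K^{-ps_K+1+(p-1)\beta_0+\beta_1}.
\]
The mixed term dominates for $r_K\ge1$, because $\beta_0\le\beta_1$. Taking $p$-th roots gives
\[
\|\eta\|_{L^p(F_i)}\lesssim h_K^{s_K}\,r_K^{-s_K+1/p+\beta_0+(\beta_1-\beta_0)/p}.
\]
Rewriting this as $h_K^{1/p}\cdot h_K^{s_K-1/p}/r_K^{s_K-1/p}\cdot r_K^{\beta_0+(\beta_1-\beta_0)/p}$ produces exactly the first entry. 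The gradient case is the same computation with indices shifted by one.

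\emph{Second entry.} Here I avoid $\min(\vec m_i\cdot\vec n)$ and use $\|\eta\|_{L^p(F_i)}\le|F_i|^{1/p}\|\eta\|_{L^\infty(F_i)}\le|F_i|^{1/p}\|\eta\|_{L^\infty(\Ka)}$, since $F_i\subset\overline K\subset\overline\Ka$. The task is then to bound the $L^\infty$ approximation error on the shape-regular simplex $\Ka$. I would do this by a Sobolev embedding on $\Ka$ scaled to the reference simplex, with a polynomial-degree-explicit version obtained through the Hilbertian results of \cite{BabuskaSuri1987,MunozSola1997} combined with Gagliardo--Nirenberg-type arguments, as in Lemma \ref{th_hp_polyap}. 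The target is
\[
\|\eta\|_{L^\infty(\Ka)}\lesssim h_K^{s_K-d/p}r_K^{-(s_K-d/2)}\|\Ea v\|_{W^{s_K,p}(\Ka)}.
\]
After rewriting, this gives $|F_i|^{1/p}h_K^{(1-d)/p}r_K^{d/2-1/p}\cdot h_K^{s_K-1/p}r_K^{-(s_K-1/p)}$, and for the gradient it loses an extra $r_K^{d/2}$, which yields $r_K^{d-1/p}$.

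The main obstacle is this $hp$-explicit $L^\infty$ estimate with precisely the stated $r_K$-powers ($d/2-1/p$, and $d-1/p$ for gradients). That step is where I expect the most work. If the embedding route gives worse powers, I would fall back on $L^2$-based $hp$ estimates, an $L^\infty$–$L^2$ inverse estimate of type \eqref{LinfLq}, and the $h_K$-scaling of the embedding constants.
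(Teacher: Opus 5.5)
\textbf{Where your proposal matches the paper.} Your route is the paper's own. Its proof just transfers \cite[Lemma 4.31]{CanDongGeo2021}: it works with $\Ea v$ on the covering simplex $\Ka$, gets the first entries of the two minima from \eqref{multitr} and Assumption \ref{assumpt_best_app}, and gets the second entries from $\|\cdot\|_{L^p(F_i)}\le|F_i|^{1/p}\|\cdot\|_{L^\infty}$ plus an $hp$-bound in $L^\infty(\Ka)$. Your exponent bookkeeping for the first entries is correct. The $L^\infty$ bound is exactly the step the paper delegates to the Hilbertian argument.

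\textbf{The gap: the $L^\infty$ bound.} You leave this bound open, and it does not follow from what you set up. With $\Pi_{r_K}$ defined only through Assumption \ref{assumpt_best_app}, interpolating \eqref{proj_elem} gives an $r_K$-power that depends on $\beta_0,\beta_1,\beta_2$ and is in general not $r_K^{d/2}$. Your fallback via \eqref{LinfLq} cannot be applied to the non-polynomial $v-\Pi_{r_K}v$, and after splitting off a polynomial it costs $r_K^{d}$ for $q=2$.

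The missing ingredient has two parts. First, take for $\Pi_{r_K}$ the concrete Babu\v{s}ka--Suri-type operator underlying Lemma \ref{th_hp_polyap}, so that a single polynomial serves both entries of each minimum. Second, use its Hilbertian bound
\[
\|\Ea v-\Pi_{r_K}\Ea v\|_{L^\infty(\Ka)}\le C\,h_\Ka^{s_K-d/2}r_K^{-(s_K-d/2)}\|\Ea v\|_{H^{s_K}(\Ka)},
\]
which follows from Gagliardo--Nirenberg between $L^2$ and $H^{s_K}$ on the reference simplex plus scaling. Combine it with H\"older's inequality $\|\Ea v\|_{H^{s_K}(\Ka)}\le C|\Ka|^{1/2-1/p}\|\Ea v\|_{W^{s_K,p}(\Ka)}$. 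This gives exactly your target $h_K^{s_K-d/p}r_K^{-(s_K-d/2)}$. It also explains why $\Ca_{ap}^{L^p}$ carries the Hilbertian power $r_K^{d/2-1/p}$.

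\textbf{The gradient entry.} Here the same device requires $\nabla(\Ea v-\Pi_{r_K}\Ea v)\in L^\infty(\Ka)$.
\begin{itemize}
\item For $s_K\ge3$, the Hilbertian argument applied to the gradient gives $h_K^{s_K-1-d/p}r_K^{-(s_K-1-d/2)}$, which is better than your target.
\item For $s_K=2$, you need $W^{1,p}(\Ka)\hookrightarrow L^\infty(\Ka)$, i.e.\ $p>d$.
\end{itemize}
When $d=3$, $p\in(2,3]$ and $s_K=2$, this embedding fails and $\nabla v$ need not be bounded. In fact, for $p\in(2,3)$ the second entry of $\Ca_{ap}^{W^{1,p}}$ cannot hold there with $C$ independent of $F_i$. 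To see this, take $|\nabla v|\sim|\vec{x}-\vec{x}_0|^{-\gamma}$ with $0<\gamma<3/p-1$, so that $v\in W^{2,p}$. Place $\vec{x}_0$ on a flat part of $\partial K$, and let $F_i$ be a flat triangle of diameter $\rho\to0$ around $\vec{x}_0$. Since $\nabla\Pi_{r_K}v$ is bounded independently of $\rho$, the $p$-th power of the left-hand side of \eqref{hp_W1pF} is of order $\rho^{2-\gamma p}$. The $p$-th power of the right-hand side is $O(\rho^{2})$. So this entry, and your plan for it, must be restricted to $s_K\ge3$ or $p>d$.
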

\begin{proof}
    The proof follows analogous steps to the proof of the respective approximation estimates in Hilbertian norms, derived in \cite[Lemma 4.31]{CanDongGeo2021}. The difference is the use Lemma \ref{th_multitr} and Assumption \ref{assumpt_best_app} (or Lemma \ref{th_hp_polyap}) instead of the respective Hilbertian counterparts.
\end{proof}

\subsection{Discontinuity-penalization parameter.}\label{DGpoly_staberr}
The monotonicity, boundedness and coercivity conditions for the robust IPDG method \eqref{IPDG} on meshes comprising essentially arbitrarily-shaped elements are shown in a completely analogous fashion to their counterparts in Section \ref{DG_monot_cont} for simplicial meshes and are, thus, omitted for brevity. The choice of the discontinuity-penalization parameter for which these properties hold is of practical importance and is discussed in detail.

\begin{defin}\label{IFK}
        We define the index set
        $I_F^K := \{1\leq i \leq n_K: \text{ } |\mathring{F} \cap F_i^K| \neq 0\}\subset\{1,\dotso,n_K\}$, for each interface $F\in\Fa$ of an element $K\in\Ta_F$, for each of its subdivisions $\{F_i^K\}_{i=1}^{n_K}$ of $\partial K$, described in Assumption \ref{starshaped}.
\end{defin}

 In other words,   for $F\in\Fa$, $K\in\Ta_F$, $I_F^K$ is the index set of the relevant $F_i^K\subset\partial K$ to interface $F$ only, i.e.,  $F \subset \cup_{i \in I_F^K} F_i^K$. In addition, $I_F^K$ defines the indices of a mutually exclusive partition $\{F \cap F_i^K\}_{i \in I_F^K}$ of $F$, that is $F = \cup_{i \in I_F^K} (F \cap F_i^K)$.

\begin{lemma}\label{th_DGmonot_poly}
    Let $\varepsilon>0$, $p\in(2,\infty)\cap\mathbb{Q}$. For $F\in\Fa$, we define
    \begin{equation*}
        \Gg(K,F_i^K) := \Ca_\mathrm{Q}^p(r_K,K,F_i^K)\frac{(2 (r_K-1)^2 + 1)|F_i^K|}{|K|}, \text{ } i\in I_F^K, \text{ } K\in\{K^+,K^-\},
    \end{equation*}
    where $\Ca_\mathrm{Q}^p(r_K,K,F_i^K)$ is the constant in \eqref{QN_invest_poly}. We also consider the piecewise constant function $\zeta_F^\pm:F\to\mathbb{R}$, defined by
    \begin{equation}\label{defzeta_poly}
      \zeta_F^\pm|_{F_i^{K^\pm}\cap F} = \zeta_{F_i^{K^\pm}} := \varepsilon/(m_{K^\pm}\mu_\varepsilon \operatorname{card}(I_F^{K^\pm}) \Gg(K^\pm,F_i^{K^\pm})),\text{ } i\in I_F^{K^\pm},
    \end{equation}
    where $\varepsilon$ is defined by \eqref{epsilon}. We select
    \begin{equation}\label{sigma_poly}
        w_F^\pm = \zeta_F^\pm/(\zeta_F^+ + \zeta_F^-) \text { and } \sigma|_F = 1/(\zeta_F^+ + \zeta_F^-).
    \end{equation} 
    Then, for $u_h,v_h\in S_\Ta^\vec{r}$ and $u\in W^{1,p}_0(\Omega)\cap W^{1+1/p,p}(\Omega;\Ta)$, the monotonicity condition \eqref{DGmonot} holds.
    Moreover, the boundedness condition \eqref{DGcont} holds. Furthermore for all $p\in [2,\infty)$ the coercivity condition \eqref{coer} also holds.
\end{lemma}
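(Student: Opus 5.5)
The plan is to repeat the proofs of Lemmas \ref{th_DGmonot}, \ref{th_DGcont} and \ref{th_coer} almost verbatim. The only place where the element geometry enters those proofs is the trace-inverse step \eqref{invest_complete} (and its $L^p$ analogue \eqref{invest_coer}). That step is applied face by face after writing each skeleton integral as $\sum_{F\in\Fa}\sum_\pm w_F^\pm\sigma_F^{-1}\int_F(\cdots)\d s$. All other ingredients are pointwise algebraic inequalities and carry over unchanged: Lemma \ref{th_delDG}, the Young-type bounds of Lemma \ref{th_young}, and \eqref{tri_vec}. These inequalities remain valid when $\sigma$ and $\vec w$ are merely piecewise constant on $F$ rather than constant. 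So the whole task reduces to proving, for $K=K^\pm$ and $w,v\in\Pa_{r_K}(K)$, the key estimate
\[
\sum_{F\subset\partial K}\int_F w_F^{K}\sigma^{-1}\left(|\nabla w|+|\nabla v|\right)^{p-2}|\nabla v|^2\d s\le \frac{\varepsilon}{\mu_\varepsilon}\int_K\left(|\nabla w|+|\nabla v|\right)^{p-2}|\nabla v|^2\d\vec x .
\]
In the proof of Lemma \ref{th_DGmonot}, this is exactly what turns \eqref{J3first} into \eqref{J3_even}.

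To establish this estimate, I use \eqref{sigma_poly}, which gives $w_F^\pm\sigma^{-1}=\zeta_F^\pm$ pointwise on $F$. I split $F=\cup_{i\in I_F^K}(F\cap F_i^K)$ as in Definition \ref{IFK}. On each piece $\zeta_F^\pm$ equals the constant $\zeta_{F_i^K}$, and I enlarge $F\cap F_i^K$ to $F_i^K$. Then Lemma \ref{th_QN_invest_poly} gives
\[
\zeta_{F_i^K}\int_{F_i^K}(\cdots)\d s\le \zeta_{F_i^K}\,\Gg(K,F_i^K)\int_K(\cdots)\d\vec x=\frac{\varepsilon}{m_K\mu_\varepsilon\operatorname{card}(I_F^K)}\int_K(\cdots)\d\vec x .
\]
Summing over $i\in I_F^K$ cancels the factor $\operatorname{card}(I_F^K)$. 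Summing over the $m_K$ interfaces of $K$ cancels $m_K$. This yields the key estimate, after which \eqref{J3_even}, the bound for $\Ja_{4,1}$, \eqref{J42}, and hence \eqref{DGmonot} follow word for word. The boundedness \eqref{DGcont} follows the same way: the terms $\Ka_{3,1}$ and $\Ka_{3,2}$ are treated exactly as $\Ja_3$, and the remaining terms contain no inverse estimates.

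For coercivity with any $p\in[2,\infty)$, I replace \eqref{LpFinv} by the $L^p$ estimate \eqref{LpFinv_poly_gen} with $q=p$ applied to $\nabla u_h$ on each $F_i^K$. This again gives $\zeta_{F_i^K}\|\nabla u_h\|^p_{L^p(F_i^K)}\le \varepsilon\,(m_K\mu_\varepsilon\operatorname{card} I_F^K)^{-1}\|\nabla u_h\|^p_{L^p(K)}$, provided $\Gg$ dominates the $L^p$ trace-inverse constant. I will check this by comparing $\Ca_\INV(p,r,K,F_i)(2r^2+1)$ with $\Ca_\mathrm{Q}^p(r,K,F_i)(2(r-1)^2+1)$. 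As in Lemma \ref{th_coer}, the penalty constant may be replaced by any sufficiently large constant, so for irrational $p$ I define $\Gg$ with $\Ca_\INV(p,\cdot)$ directly.

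I expect this comparison of constants to be the main obstacle. The quantity $\Ca_\mathrm{Q}^p$ involves $\Ca_\INV(1/k_p,pk_p(r-1),\cdot)$, with a different $q$ and a different polynomial degree. Hence the domination needed for \eqref{invest_coer} is not automatic, particularly in the coverable branch where the factor $(2r^2+1)^{d-1}$ appears. I would try first to show monotonicity of $\Ca_\mathrm{reg}$ and $\Ca_\mathrm{cov}$ in their arguments. If that fails, I would simply stipulate, as Lemma \ref{th_coer} does, that the constant in $\Gg$ is taken as the maximum of the two.
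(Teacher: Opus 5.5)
Your proposal is correct and is the argument the paper has in mind when it says these proofs are ``completely analogous'' to Section~\ref{DG_monot_cont}: the identity $w_F^\pm\sigma^{-1}=\zeta_F^\pm$, the split of $F$ into the pieces $F\cap F_i^K$, $i\in I_F^K$, and Lemma~\ref{th_QN_invest_poly} applied on each $F_i^K$ cancel $\operatorname{card}(I_F^K)$ and $m_K$ exactly as you describe. Your worry about coercivity is justified (for instance, if $K$ is not coverable, $\Gg(K,F_i^K)$ equals $2^{1/k_p-p/2-1}<1$ times the $L^p$ trace-inverse constant for $\nabla u_h$), so your fallback of enlarging the constant in $\Gg$ is genuinely needed; this mirrors the replacement clause of Lemma~\ref{th_coer}, which is what the paper's omitted proof implicitly carries over.
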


\begin{remark}\label{rem_const_pen_weight}
   The penalty \eqref{sigma_poly} and of the weights $\vec{w}$ are constant on each $F\cap F_i^K\cap F_j^{K'}$, for $i\in I_F^K$, $j\in I_F^{K'}$, for all $K,K'\in\Ta_F$, $F\in\Fa$, whenever $|F\cap F_i^K\cap F_j^{K'}|\neq0$. That is both the penalty and the weights are, in general, piecewise constant on each interface.
\end{remark}
Also, Remark \ref{rem_good_penalty} is still valid in the case of general meshes in that the harmonic average in the weights allows to select the smallest of the two contributions as penalty.

\subsection{Stability and \emph{a priori} error bounds.}\label{DGpoly_stab_besterr} 
To prove stability, a broken Poincar\'{e} inequality of the form \eqref{poinc} is required. For straight-faced polytopic elements that are star-shaped with respect to a ball \eqref{poinc} has been recently proven in \cite{BoMasc2025}; this is a stronger requirement than Assumption \ref{starshaped}. Current work, \cite{CanDonGeoPoincare}, is concerned with broken Poincar\'{e} inequalities for meshes comprising curved elements satisfying Assumption \ref{starshaped}, for functions in $W^{1,p}_0(\Omega)\cap W^{2,p/(p-1)}(\Omega)$. 
As this is an active area of research, we prefer to state the stability and \emph{a priori} error bounds \emph{assuming} the validity of a broken Poincar\'{e} inequality of the form \eqref{poinc} for the meshes considered. The proofs of the following results remains essentially identical to the case of simplicial element meshes proven above.

\begin{theorem}\label{th_stab_poly}
     Theorem \ref{th_stab} holds for all (polytopic) meshes for which \eqref{poinc} is valid.\qed
\end{theorem}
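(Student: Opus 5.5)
The plan is to transfer the proof of Theorem \ref{th_stab} verbatim; the only mesh-dependent ingredients are the coercivity bound \eqref{coer} and the broken Poincar\'e inequality \eqref{poinc}. \eqref{poinc} is assumed in the statement. Coercivity on general meshes holds by Lemma \ref{th_DGmonot_poly}, for all $p\in[2,\infty)$, with $\sigma$ and $\vec{w}$ chosen as in \eqref{sigma_poly}. The real work is to check that this coercivity claim genuinely transfers, since it was only asserted there.

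The key step is the estimate \eqref{invest_coer}. In the polytopic setting I would split each interface $F$ into the pieces $F\cap F_i^{K^\pm}$, $i\in I_F^{K^\pm}$, on which $\zeta_F^\pm$, and hence $w_F^\pm\sigma_F^{-1}=\zeta_F^\pm$, are constant. On each piece I would apply the general trace-inverse estimate \eqref{LpFinv_poly_gen} with $q=p$ to $\nabla u_h$ (degree $r_K-1$), bounding $\|\nabla u_h\|^p_{L^p(F\cap F_i^K)}$ by a multiple of $\|\nabla u_h\|^p_{L^p(K)}$.

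The factors $\operatorname{card}(I_F^K)^{-1}$ and $m_K^{-1}$ built into \eqref{defzeta_poly} are designed so that summing over $i\in I_F^K$ and over the interfaces of $K$ yields at most $\varepsilon\|\nabla u_h\|^p_{L^p(K)}$. I would check that the summation over interfaces and sub-faces is arranged consistently with these counting factors.

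The expected obstacle is that $\Gg(K,F_i^K)$ is defined through $\Ca_\mathrm{Q}^p$, that is, through $\Ca_\INV$ at exponent $1/k_p$ and degree $pk_p(r-1)$, rather than through $\Ca_\INV(p,r-1,K,F_i)$. Lemma \ref{th_coer} handles the analogous point for simplices by allowing any sufficiently large constant, and I would argue in the same way. I would verify that the $L^p$ constant $\Ca_\INV(p,r-1,\cdot)(2(r-1)^2+1)$ is dominated by the $\Gg$ used in the penalty, or else simply define the penalty for irrational $p$ with the $L^p$ constant in place of $\Gg$, exactly as the simplicial coercivity lemma allows.

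Once \eqref{invest_coer} is in place, the bounds \eqref{coer_after_inv} and \eqref{symterm} follow by the same H\"older and Young arguments, which are pointwise on faces and therefore mesh-independent. This gives \eqref{coer}. With $\varepsilon$ from \eqref{epsilon}, the coercivity constant is at least $1/2$, and I would then conclude
\[
\tfrac12\trn u_h\trn_p^p\le B_\Ta(u_h;u_h,u_h)=\int_\Omega f u_h\d\vec{x}\le\|f\|_{L^{p'}(\Omega)}\|u_h\|_{L^p(\Omega)}\le C_\mathrm{PF}\|f\|_{L^{p'}(\Omega)}\trn u_h\trn_p ,
\]
using \eqref{IPDG} with $v_h=u_h$, H\"older's inequality and the assumed \eqref{poinc}. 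Dividing by $\trn u_h\trn_p$, which is allowed unless $u_h=0$ (trivial), yields \eqref{stab}.
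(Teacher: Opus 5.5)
Your proposal is correct and follows the paper's route. The paper gives no separate argument: it relies on the coercivity asserted in Lemma~\ref{th_DGmonot_poly} (by analogy with Lemma~\ref{th_coer}), the assumed inequality \eqref{poinc}, and the H\"older step from the proof of Theorem~\ref{th_stab}. Your check of \eqref{invest_coer} on the pieces $F\cap F_i^K$, with the counting factors $m_K^{-1}\operatorname{card}(I_F^K)^{-1}$, is a sound elaboration of what the paper leaves implicit.

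On the obstacle you flag: domination of the $L^p$ constant by $\Gg(K,F_i^K)$ is not automatic even for rational $p$. The two constants come from $\Ca_\INV$ at different $(q,r)$, with different powers of $2$ and different coverability thresholds. The safe choice for every $p$ is therefore to build the penalty from the larger of the two constants. This keeps the coercivity you need, and it also preserves the bounds used for monotonicity, since enlarging the constant only decreases $\zeta_F^\pm$.
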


\begin{theorem}\label{th_ba_poly} 
    Theorem \ref{th_best_app} holds of all meshes satisfying Assumption \ref{starshaped}.\qed
\end{theorem}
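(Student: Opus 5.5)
The plan is to reproduce the proof of Theorem \ref{th_best_app} step by step. The new mesh enters only through the face-by-face bounds that feed the monotonicity and boundedness estimates.

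\emph{Error splitting and monotonicity.} Take any $v_h\in S_\Ta^\vec{r}$ and write $e=u_h-u=\xi+\eta$ with $\xi=u_h-v_h$ and $\eta=v_h-u$. The consistency identity \eqref{consistencyDG} involves no trace-inverse estimates, so it holds verbatim on the general meshes of Assumption \ref{starshaped}. Apply the monotonicity bound \eqref{DGmonot}, now available via Lemma \ref{th_DGmonot_poly} with $\sigma$ and $\vec{w}$ chosen by \eqref{sigma_poly}. Use consistency to replace $B_\Ta(u_h;u_h,\xi)$ by $B_\Ta(u_h;u,\xi)$. Then bound $B_\Ta(u_h;u,\xi)-B_\Ta(u_h;v_h,\xi)$ with the boundedness estimate \eqref{DGcont}, which Lemma \ref{th_DGmonot_poly} also provides. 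With $\varepsilon$ as in \eqref{epsilon}, the coefficient of $\trn\xi\trn_{(\Ta;p,u_h)}^2$ is absorbed into the left-hand side. This gives the analogue of \eqref{errxi}, with the same right-hand side as in \eqref{best_app}.

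\emph{Main obstacle: the face terms.} The one point needing care is to check that Lemma \ref{th_DGmonot_poly} really delivers the estimates as used here. The penalty and weights are now only piecewise constant on each interface $F$ (Remark \ref{rem_const_pen_weight}). I would therefore redo the key step \eqref{invest_complete} on each piece $F\cap F_i^{K}$, $i\in I_F^K$. On each piece:
\begin{itemize}
\item apply Lemma \ref{th_QN_invest_poly} on $F_i^K$, since $F\cap F_i^K\subset F_i^K$;
\item use $w_F^\pm\sigma^{-1}=\zeta_F^\pm$ from \eqref{defzeta_poly}.
\end{itemize}
Each such contribution is then bounded by $\varepsilon\mu_\varepsilon^{-1}\big(m_K\operatorname{card}(I_F^K)\big)^{-1}$ times the element integral over $K$. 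Summing over $i\in I_F^K$ and over the at most $m_K$ interfaces of $K$ reproduces exactly the element integral. This is why the factors $\operatorname{card}(I_F^K)$ and $m_K$ appear in $\zeta$. With that, the bounds for $\Ja_3$, $\Ja_4$, $\Ka_3$ and $\Ka_4$ go through unchanged, since they use only Young's inequalities \eqref{young}--\eqref{genYoung2}, \eqref{tri_vec} and \eqref{del1DG}--\eqref{del2DG}, which are pointwise.

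\emph{Conclusion.} Apply the quasi-triangle inequality $\trn e\trn^2_{(\Ta;p,u_h)}\le 2^{p-1}\big(\trn\xi\trn^2_{(\Ta;p,u_h)}+\trn\eta\trn^2_{(\Ta;p,u_h)}\big)$. Then use the pointwise bound $\trn e\trn_p^p\le\trn e\trn^2_{(\Ta;p,u_h)}$, which holds for $p\ge2$ because $(a+b)^{p-2}b^2\ge b^p$. Taking the infimum over $v_h$ then gives \eqref{best_app}.

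This argument never uses a broken Poincaré inequality, so no assumption of the form \eqref{poinc} is needed here, unlike for stability. Assumption \ref{starshaped} suffices.
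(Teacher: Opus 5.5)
Your proposal is correct and follows the paper's route. The paper gives no separate proof; it states that the argument of Theorem \ref{th_best_app} carries over once the monotonicity and boundedness estimates of Lemma \ref{th_DGmonot_poly} hold, and your piecewise check of \eqref{invest_complete} on each $F\cap F_i^K$, using the $\operatorname{card}(I_F^K)$ and $m_K$ factors in $\zeta$, is exactly the omitted detail.
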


Theorems \ref{th_stab_poly} and \ref{th_ba_poly} imply the following \emph{a priori} error bound for the quasi-norm and the broken Sobolev norm.

\begin{theorem}\label{th_rates_poly}
     Suppose that Assumptions \ref{assumpt_best_app}, \ref{starshaped} and \ref{assumpt_overlap} are satisfied. Assume further that the extension $\Ea u$ of the exact solution $u$ satisfies $\Ea u|_\Ka\in W^{s_K,p}(\Ka)$, 
     $\Ka\in\Ta^{cov}$
     . Then, there exists a constant $C>0$, depending only on $\vec{s},p,\Omega,C_\mathrm{diam}$ and the shape-regularity of $\Ta^{cov}$, such that 
    \begin{align}\label{rates_poly}
        \trn u_h - u \trn_{(\Ta;p,u_h)}^2 + \trn u_h - u \trn_p^p \leq CZ\Bigg(  \sum_{K\in\Ta}\Da(K)\frac{h_K^{p(s_K-1)}}{r_K^{p(s_K-1)}}\|\Ea u\|_{W^{s_K,p}(\Ka)}^p \Bigg)
        ,
    \end{align}
    where
    \begin{align*}
        \Da(K) = r_K^{p\beta_1} + \sum_{F\in\Fa: \text{ } F\subset\partial K}\sum_{i\in I_F^K} \Bigg( \left( \frac{\Da_i(K)r_Kh_K}{|K|} \right)^{p-1}(\Ca_{ap}^{L^p}(r_K,K,F_i^K))^p& \\
        \text{  } + \frac{|K|}{\Da_i(K)r_Kh_K} (\Ca_{ap}^{W^{1,p}}(r_K,K,F_i^K))^p\Bigg),
    \end{align*}
   with $\Da_i(K) := m_K\operatorname{card}(I_F^K)\Ca_\INV(r_K,K,F_i^K)|F_i^K|$, and $\Ca_{ap}^{L^p}(r_K,K,F_i^K)$, $\Ca_{ap}^{W^{1,p}}(r_K,K,F_i^K)$ as in Lemma \ref{th_polyap_polyg}.
\end{theorem}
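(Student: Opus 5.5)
We will follow the structure of the proof of Theorem \ref{th_rates}. By Theorem \ref{th_ba_poly}, the error is bounded by $C_p$ times the infimum in \eqref{best_app}. We choose $v_h|_K=\Pi_{r_K}u$ from Lemma \ref{th_polyap_polyg} and set $\eta=v_h-u$. The bound then splits into three groups. The first collects the volume terms $\|\nabla_h\eta\|^p_{L^p(\Omega)}$, which enter through the quasi-norm volume part and the $|\nabla u|$-weighted volume integral. The second is the jump term $\int_\Gamma\sigma^{p-1}|\llbracket\eta\rrbracket|^p$. The third is the face-gradient term $\sum_F\sum_\pm w_F^\pm\sigma^{-1}\|\nabla\eta^\pm\|^p_{L^p(F)}$. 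As in \eqref{QNbound1}, \eqref{I2first} and \eqref{trace1}, H\"older's inequality turns each group into $Z(\cdot)$ of the corresponding $L^p$-type quantity. This needs the analogue of \eqref{bound_uh_face}, namely $\sum w_F^\pm\sigma^{-1}\|\nabla u_h^\pm\|^p_{L^p(F)}\le C_p\|\nabla_h u_h\|^p_{L^p(\Omega)}\le C_pC_{\rm st}$. This analogue follows by splitting each $F$ into $F\cap F_i^{K}$, $i\in I_F^K$, and applying Lemma \ref{th_LpFinv_poly_gen} with $q=p$. On each piece one has $w^\pm\sigma^{-1}=\zeta_{F_i^K}$, and the factor $m_K\operatorname{card}(I_F^K)$ in \eqref{defzeta_poly} absorbs the number of pieces, so that the sum over faces recovers $\|\nabla u_h\|^p_{L^p(K)}$. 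Since $Z$ is monotone and subadditive up to constants, it suffices to bound the three arguments by $\sum_K\Da(K)(h_K/r_K)^{p(s_K-1)}\|\Ea u\|^p_{W^{s_K,p}(\Ka)}$.

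The volume term follows directly from \eqref{hp_polyap_polyg} with $m=1$, which gives the contribution $r_K^{p\beta_1}$ in $\Da(K)$. For the jump term, we use $|\llbracket\eta\rrbracket|^p\le 2^{p-1}(|\eta^+|^p+|\eta^-|^p)$ and work piecewise on $F\cap F_i^K$. There we bound $\sigma^{p-1}\le\zeta_{F_i^K}^{1-p}$. Here $\zeta_{F_i^K}^{-1}$ is, up to $C_p$, equal to $m_K\operatorname{card}(I_F^K)\Ca_\INV|F_i^K|r_K^2/|K|=\Da_i(K)r_K^2/|K|$. The remaining factor is $\|\eta\|^p_{L^p(F_i^K)}$, which \eqref{hp_LpF} bounds by $(\Ca_{ap}^{L^p})^p(h_K/r_K)^{ps_K-1}\|\Ea u\|^p$. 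Multiplying out gives $(\Da_i(K)r_Kh_K/|K|)^{p-1}(\Ca^{L^p}_{ap})^p(h_K/r_K)^{p(s_K-1)}$, matching the first summand in $\Da(K)$ exactly. The face-gradient term is handled the same way: $w^\pm\sigma^{-1}=\zeta_{F_i^K}\simeq|K|/(\Da_i(K)r_K^2)$, combined with \eqref{hp_W1pF}, gives $\frac{|K|}{\Da_i(K)r_Kh_K}(\Ca^{W^{1,p}}_{ap})^p(h_K/r_K)^{p(s_K-1)}$. Summing over $F\subset\partial K$ and $i\in I_F^K$ yields $\Da(K)$.

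The main obstacle is the bookkeeping of the inverse-estimate constants. The constant $\Ca_\mathrm{Q}^p$ in $\Gg$ is evaluated at the modified arguments $(1/k_p,pk_p(r-1))$. We must check that it is comparable, up to $p$-dependent constants, to the $\Ca_\INV(r_K,K,F_i^K)$ appearing in $\Da_i(K)$, including the polynomial-degree factors hidden in $\Ca_\mathrm{cov}$. We would absorb these $p$- and $k_p$-dependent factors into $C$, interpreting $\Ca_\INV$ in $\Da_i$ accordingly. The other step needing care is that \eqref{hp_LpF} and \eqref{hp_W1pF} are stated on the whole of $F_i^K$ while the integrals run only over $F\cap F_i^K$; this is harmless because the integrands are nonnegative.
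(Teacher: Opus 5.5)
Your proposal is correct and follows the paper's proof essentially step by step. It reduces \eqref{best_app} to $Z$ of the volume, jump and face-gradient sums as in \eqref{best_app_high_poly}, and uses $w_F^\pm\sigma^{-1}=\zeta_{F_i^K}$ and $\sigma|_{F\cap F_i^K}\le 1/\zeta_{F_i^K}\lesssim \Da_i(K)r_K^2/|K|$. Here you read $\Ca_{\INV}$ in $\Da_i(K)$ as the constant actually entering $\Gg$, up to $p$-dependent factors, just as the paper tacitly does. You then insert \eqref{hp_polyap_polyg}, \eqref{hp_LpF} and \eqref{hp_W1pF}, and your exponent arithmetic reproduces $\Da(K)$ exactly.

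One small refinement concerns the analogue of \eqref{bound_uh_face}. It is cleaner to apply Lemma \ref{th_QN_invest_poly} with $w=0$, which yields exactly the constant $\Gg(K,F_i^K)$, rather than Lemma \ref{th_LpFinv_poly_gen} with $q=p$. The $(p,r_K-1)$-coverability governing the latter is not nested with the $(1/k_p,pk_p(r_K-1))$-coverability built into the penalty, so its constant need not be dominated by $\Gg$. The paper glosses over the same point by simply citing \eqref{LpFinv_poly}.
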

\begin{proof} Let $v_h\in S_\Ta^\vec{r}$ and $\eta\equiv \eta(v_h) := v_h-u$.
  We first show that there exists a constant $C_p$, depending on $p$, such that 
    \begin{align}\label{best_app_high_poly}
        &\trn u_h - u \trn_{(\Ta;p,u_h)}^2 + \trn u_h - u \trn_p^p \notag \\
        &\text{ } \leq C_p\inf_{v_h\in S_\Ta^\vec{r}}\Bigg[ Z\Big( \|\nabla_h \eta \|_{L^p(\Omega)}^p\Big)+ Z\bigg(\sum_{K\in\Ta}\sum_{F\in\Fa: \text{ } F\subset\partial K}\sum_{i\in F_i^K}\left(\frac{\Da_i(K)r_K^2}{|K|}\right)^{p-1}\|\eta\|_{L^p(F\cap F_i^K)}^p\bigg) 
        \notag\\
        &\qquad\qquad\qquad  + Z\bigg(\sum_{K\in\Ta}\sum_{F\in\Fa: \text{ } F\subset\partial K}\sum_{i\in F_i^K}\frac{|K|}{\Da_i(K)r_K^2}\|\nabla\eta\|_{L^p(F\cap F_i^K)}^p\bigg)\Bigg].
    \end{align}

    It suffices to show that the right-hand side of \eqref {best_app} (corresponding to Theorem \ref{th_ba_poly}) is bounded by the right-hand side of \eqref{best_app_high_poly}.  First, we note that \eqref{quasi_to_norm_Omega} holds in the current setting also. 
    
    As before, we set
       $ \trn \eta \trn_{(\Ta;p,u_h)}^2 
        = \Ia_1 + \Ia_2$, with $\Ia_1$ and $\Ia_2$ as in the proof of Theorem \ref{th_rates}.
        
    For $\Ia_1$, we work exactly as before to show
       $ \Ia_1 \leq C_pZ\big( \|\nabla_h \eta\|_{L^p(\Omega)}^p\big)$.
    For $\Ia_2$, we work exactly as in \eqref{I2first}. 
    The resulting term $\|(w_F^\pm \sigma^{-1})^\frac{1}{p}\nabla u_h^\pm\|_{L^p(F)}$, for $F\in\Fa$, $\pm\in\{+,-\}$, is further bounded by applying the inverse estimate \eqref{LpFinv_poly}, to deduce \eqref{bound_uh_face} for the present setting also.
    Moreover, due to the choice of weighted averages, 
 \[
        \sigma|_{F\cap F_i^K} \leq 1/\zeta_{F_i^K} \leq C_p\frac{\Da_i(K)r_K^2}{|K|}, \qquad i\in I_F^K,
        \]
for $F\in\Fa$, $K\in\Ta_F$. Thus, for the jump-term of $\Ia_2$, working as above, we deduce
    \begin{align}\label{jump_er_1_poly}
        &\int_\Gamma \sigma^{p-1}|\llbracket \eta \rrbracket|^p \d s  \leq 
        C_p \sum_{K\in\Ta}\sum_{F\in\Fa: \text{  } F\subset\partial K}\sum_{i\in I_F^K}\left( \frac{\Da_i(K)r_K^2}{|K|} \right)^{p-1}\| \eta \|_{L^p(F \cap F_i^K)}^p,
    \end{align}
    and, thus,
    \begin{align}\label{QNbound2_poly}
        \Ia_2 \leq C_pZ\bigg( \sum_{K\in\Ta}\sum_{F\in\Fa: \text{  } F\subset\partial K}\sum_{i\in I_F^K}\left( \frac{\Da_i(K)r_K^2}{|K|} \right)^{p-1}\| \eta \|_{L^p(F \cap F_i^K)}^p \bigg).
    \end{align}
    
    It remains to bound the last term in \eqref{best_app} in the current `polytopic' setting, the only difference with respect to \eqref{trace1} being the piecewise constant nature of the weights and of $\sigma$ along each interface $F$, arriving eventually at 
   \[     \int_\Gamma\sigma^{-1}\lbrac \left(|\nabla u_h| + |\nabla\eta| \right)^{p-2}|\nabla\eta|^2\rbrac_\vec{w}\d s \leq C_p Z\bigg(\sum_{K\in\Ta}\sum_{F\in\Fa: \text{ } F\subset\partial K}\sum_{i\in I_F^K}\frac{|K|}{\Da_i(K)r_K^2}\|\nabla\eta\|_{L^p(F \cap F_i^K)}^p\bigg).
   \]
  Combining the above bounds, we arrive at \eqref{best_app_high_poly}.
  The estimate \eqref{rates_poly} follows immediately from the substitution of the polynomial approximation estimates \eqref{hp_polyap_polyg}, \eqref{hp_LpF} and \eqref{hp_W1pF} into \eqref{best_app_high_poly}.
\end{proof}

The above theorem provides with error estimates for the robust IPDG method on a very general class of polytopic meshes. For an illustration of the rates of convergence, we provide a corollary under additional assumptions on the element shapes of a mesh $\Ta$.

\begin{corollary}\label{cor_apriori}
    With the hypotheses of Theorem \ref{th_rates_poly}, and additionally assuming that: 
    \begin{itemize}
        \item[(i)] there exists a constant $c_{sr}$, such that
    $
\min_{F_i^K}(\vec{m}_i\cdot\vec{n})\ge c_{sr}h_K
    $
    for all $i\in I_F^K$, $F\subset \partial K$ interfaces of $K\in\Ta$;
    \item[(ii)] there exists a positive constant $C_\mathrm{ssh}$, independent of the mesh $\Ta$ in the mesh family $\{\Ta\}$, such that $\max_{K\in\Ta}m_K \leq C_\mathrm{ssh}$, for all $K\in\Ta$;
    \item[(iii)] the mesh $\Ta$ is shape-regular,  and each interface $F\subset\partial K$, as well as the $F_i^K\subset \partial K$ from Assumption \ref{starshaped}, have comparable diameters with $K$, viz.:
\begin{equation*}
   |K|\sim h_K^d \text{ for all } K\in\Ta,\ \text{ and }\  \operatorname{diam}(F) \sim \operatorname{diam}(F_i^K) \sim h_K, \ \text{ and }\  |F| \sim |F_i^K| \sim h_K^{d-1}.
\end{equation*}
    \end{itemize}
    Then, we have the following error bound
\begin{equation}\label{rates_poly_shapereg}
    \trn u_h - u \trn_{(\Ta;p,u_h)}^2 + \trn u_h - u \trn_p^p \leq CZ\bigg( \sum_{K\in\Ta}\Ca_2^p(m_K,r_K)\frac{h_K^{p(s_K-1)}}{r_K^{p(s_K-1)}}\|\Ea u\|_{W^{s_K,p}(\Ka)}^p \bigg),
\end{equation}
with $C>0$ depending only on $\vec{s}$, $p$,$\Omega$, $C_\mathrm{diam}$, the shape-regularity constant of $\Ta^{cov}$, $C_{\mathrm{ssh}}$, and $c_{sr}$, where
\begin{equation*}
    \Ca_2(m_K,r_K) = \max\left\{r_K^{\beta_1}, m_K\min\{r_K^{\beta_0 + (\beta_1 - \beta_0)/p + 1/p'}, r_K^{d/2-d/p + 1/p'}\}, \min\{r_K^{\beta_1 + (\beta_2 - \beta_1 + 1)/p}, r_K^{d/p'+1/p}\}\right\}.
\end{equation*}
\end{corollary}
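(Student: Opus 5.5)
The plan is to start from the bound \eqref{rates_poly} of Theorem \ref{th_rates_poly} and show that, under (i)--(iii), $\Da(K)\le C\,\Ca_2^p(m_K,r_K)$ with $C$ independent of $h_K$ and $r_K$. The estimate then follows by monotonicity of $Z$. The term $r_K^{p\beta_1}$ in $\Da(K)$ is already of the required form. It remains to estimate the two face contributions separately, for each interface $F\subset\partial K$ and each $i\in I_F^K$.

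\emph{Scaling of $\Da_i(K)$.} By (iii), $|F_i^K|\sim h_K^{d-1}$ and $|K|\sim h_K^d$. By (i), $\Ca_\mathrm{reg}(q,K,F_i^K)\le C|K|/(|F_i^K| c_{sr}h_K)\le C$. Hence $\Ca_\INV\le C$ uniformly, taking the $\Ca_\mathrm{reg}$ branch of the minimum in \eqref{C_inv}. Since $\operatorname{card}(I_F^K)$ is bounded by (iii) (the $F_i^K$ and $F$ are comparable), we obtain $\Da_i(K)\le C m_K h_K^{d-1}$, and also $\Da_i(K)\ge c\,h_K^{d-1}$. Therefore
\[
\frac{\Da_i(K)r_Kh_K}{|K|}\le C m_K r_K,\qquad \frac{|K|}{\Da_i(K)r_Kh_K}\le C r_K^{-1}.
\]
By (ii), $m_K\le C_\mathrm{ssh}$, so the outer sums over $F$ and $i$ have boundedly many terms. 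Note that $m_K$ raised to powers $p-1$ versus $p$ is absorbed into $C$; this is harmless.

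\emph{Face approximation constants.} Using (i) and (iii), both branches of $\Ca_{ap}^{L^p}$ reduce to pure powers of $r_K$: $(\min\vec m_i\cdot\vec n)^{-1/p}h_K^{1/p}\le C$ and $|F_i|^{1/p}h_K^{(1-d)/p}\le C$. Hence
\[
(\Ca_{ap}^{L^p})^p\le C\min\{r_K^{p\beta_0+\beta_1-\beta_0},\,r_K^{pd/2-1}\},
\]
and similarly
\[
(\Ca_{ap}^{W^{1,p}})^p\le C\min\{r_K^{p\beta_1+\beta_2-\beta_1},\,r_K^{pd-1}\}.
\]
Multiplying, the first face term is bounded by $C m_K^{p}\min\{r_K^{p(\beta_0+(\beta_1-\beta_0)/p+1/p')},\,r_K^{p(d/2-d/p+1/p')}\}$. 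For the second branch, the exponent $p-1+pd/2-1$ must match $p(d/2-d/p+1/p')$. Checking: $p(d/2-d/p+1-1/p)=pd/2-d+p-1$. These agree for $d=1$, while for larger $d$ the $r_K^{pd/2-1}$ bound must be adjusted; this is bookkeeping that I would verify by redoing the $|F_i|$-branch of Lemma \ref{th_polyap_polyg} carefully. The second face term is bounded by $C r_K^{-1}\min\{\cdot\}$, which gives $\min\{r_K^{p\beta_1+\beta_2-\beta_1-1},\dots\}$. This is at most $r_K^{p(\beta_1+(\beta_2-\beta_1+1)/p)}$, and likewise for the other branch, at most $r_K^{p(d/p'+1/p)}$, since the target exponent is larger.

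\emph{Conclusion.} Taking $\max$ over the three contributions gives $\Da(K)\le C\Ca_2^p(m_K,r_K)$. The constant depends only on the listed quantities. Substituting into \eqref{rates_poly} completes the argument. The main obstacle is the precise exponent matching in the $|F_i|$-branches of the face approximation constants. There I would be willing to accept a (possibly pessimistic) bound such as the one stated for $\Ca_2$, since any upper bound by a larger power suffices.
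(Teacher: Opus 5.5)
Your route is the paper's: use (i) and (iii) to get $\Ca_\INV\sim1$, deduce $\Da_i(K)r_Kh_K\sim m_Kr_K|K|$, reduce both face-approximation constants to powers of $r_K$, then multiply and take the maximum of the three contributions. The $\beta$-branches of both face terms go through as you describe. So does the $|F_i|$-branch of the gradient term: there $r_K^{-1}(r_K^{d-1/p})^p=r_K^{pd-2}\le r_K^{pd-d+1}=r_K^{p(d/p'+1/p)}$, which uses $d\le3$.

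\textbf{The gap.} It is exactly the step you flagged, and your proposed way out runs in the wrong direction. Lemma \ref{th_polyap_polyg} together with (i) and (iii) gives $\Ca_{ap}^{L^p}(r_K,K,F_i^K)\le C\min\{r_K^{\beta_0+(\beta_1-\beta_0)/p},\,r_K^{d/2-1/p}\}$. After multiplying by $(m_Kr_K)^{p-1}$ and taking $p$-th roots, the second branch is $r_K^{d/2-1/p+1/p'}$. This exceeds the claimed $r_K^{d/2-d/p+1/p'}$ by a factor $r_K^{(d-1)/p}$ for $d=2,3$.
\begin{itemize}
\item Saying ``any upper bound by a larger power suffices'' does not help. $\Ca_2$ must dominate what you derive, and here the claimed branch is the \emph{smaller} power, so it cannot simply be accepted.
\item The minimum does not rescue you in general. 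With the exponents of Lemma \ref{th_hp_polyap} for $d=3$, $p>6$, $s_K=2$, one has $d/2-d/p<\beta_0+(\beta_1-\beta_0)/p<d/2-1/p$. So the stated entry is strictly below what the lemma yields.
\item The paper's proof does not supply the missing argument either. At this point it simply writes $\Ca_{ap}^{L^p}\le C\min\{r_K^{\beta_0+(\beta_1-\beta_0)/p},r_K^{d/2-d/p}\}$, hence $r_K^{dp/2-d}$ in its bound for $\Da(K)$. That is not what Lemma \ref{th_polyap_polyg} as stated provides.
\end{itemize}
To close the argument you have two options. You can establish the $|F_i|$-branch of \eqref{hp_LpF} with $r_K^{d/2-d/p}$ in place of $r_K^{d/2-1/p}$, which needs a sharper $L^\infty$-type approximation bound on $\Ka$. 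Or you can replace $r_K^{d/2-d/p+1/p'}$ in $\Ca_2$ by $r_K^{d/2-1/p+1/p'}$, which is what your computation actually proves.

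\textbf{Two smaller points.}
\begin{itemize}
\item The lower bound $\Da_i(K)\gtrsim h_K^{d-1}$ needs $\Ca_\INV\gtrsim1$. This follows from $\min_{F_i}(\vec m_i\cdot\vec n)\le|\vec m_i|\le h_K$.
\item Boundedness of $\operatorname{card}(I_F^K)$ does not follow from comparability of $|F_i^K|$ and $|F|$ alone. One needs something like $|\partial K|\lesssim h_K^{d-1}$, so that only boundedly many disjoint $F_i^K$ of measure $\sim h_K^{d-1}$ fit. The paper attributes this to (ii) and is equally terse here.
\end{itemize}
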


\begin{proof}
    We employ the additional assumptions to bound the right-hand side of \eqref{rates_poly}, starting with {\rm (ii)}, which immediately implies $ \max_{K\in\Ta}\max_{F\in\Fa: F\subset\partial K}\operatorname{card}(I_F^K) \leq C_\mathrm{ssh}$, for all $K\in\Ta$. 

    Also, {\rm (i)} and {\rm (iii)} imply 
    \begin{align*}
    \Ca_{ap}^{L^p}(r_K,K,F_i^K) 
&\leq C\min\{r_K^{\beta_0 + (\beta_1 - \beta_0)/p}, r_K^{d/2-d/p}\},\\
     \Ca_{ap}^{W^{1,p}}(r_K,K,F_i^K)
    &\leq C\min\{r_K^{\beta_1 + (\beta_2 - \beta_1)/p}, r_K^{d-1/p}\}.
\end{align*}
and, also, $ \Ca_\INV(r_K,K,F_i^K) 
\sim 1$, with `$\sim$' denoting proportionality with respect to unimportant constants for the argument. Thus, $\Da_i(K)r_Kh_K\sim m_Kr_K h_K^d\sim m_Kr_K |K|$. Therefore,
\begin{align*}
    \Da(K) &\le C \big(  r_K^{p\beta_1} + m_K^pr_K^{p-1}\min\{r_K^{p\beta_0 + \beta_0-\beta_1}, r_K^{dp/2 - d} \} + r_K^{-1} \min\{r_K^{p\beta_1 + \beta_2 - \beta_1}, r_K^{d(p-1)}\}\big),
\end{align*}
and the result follows by elementary manipulations.
\end{proof}

\begin{remark}
    Assumption {\rm (i)} in Corollary \ref{cor_apriori} is rather mild, asserting essentially that, for each element $K$, there exists a, possibly overlapping, partition of the $K_{F_i}$'s as per Assumption \ref{starshaped}, with each $K_{F_i}$ being star-shaped with respect to a ball of comparable size to the element diameter. 
\end{remark}

The error bound \eqref{rates_poly_shapereg} is optimal with respect to $h_K$ and slightly suboptimal with respect to $r_K$. Moreover, if optimal $hp$-version polynomial approximation results in $W^{k,p}$ are to be provided (whence $\beta_0=\beta_1=\beta_2=0$),  as in the case of simplicial meshes above, then the suboptimality of our bound in $r_K$ reduces to $1/p'$. This showcases the robustness of the analysis, even on very general mesh concepts. Furthermore, the rates in $r_K$ are the same as in the case of simplicial meshes above for most cases: there is a slight difference on the rates when $p>6$, $d = 3$ and $s_K = 2$ are simultaneously satisfied, due to the different framework that was considered for polynomial approximation estimates.

\section{Numerical experiments}\label{DG_numerics}
We have implemented the IPDG method \eqref{IPDG} for $d=2$, using the finite element software NetGen/NGSolve \cite{schoeberl}. Our aim is to show the super-linear order of convergence of the method for regular solutions when employing $r\ge 2$. To that end, we consider shape-regular and quasi-uniform simplicial meshes, and uniform local polynomial degrees $r$. As such, the weights of the averages in the interior skeleton can be selected as $w_F^+ = w_F^- = 1/2$. For the solution of the underlying nonlinear systems, we employ a Newton--Raphson iteration. The process begins with the Poisson problem ($q=2$), and the $q$-Laplacian problems are then solved iteratively by updating $q \gets q + 0.5$ at each step, until the final computation with $q = p$, so that the initial guesses for each step are chosen as the IPDG solutions of the corresponding $(p - 0.5)$-Laplacian problems.  We consider the following examples.

\begin{example}\label{ex_sym} 
    Let $\Omega = (0,1)^2$ and $u:\Omega\to\mathbb{R}$ given by
    \begin{equation}\label{xpml1}
        u(x_1,x_2) := x_1x_2(1 - x_1)(1 - x_2)\sin(2\pi x_1x_2), \text{ } (x_1,x_2)\in[0,1]^2;
    \end{equation}
    the right-hand side $f$ is chosen so that $u$ is the solution to \eqref{BVP}.
\end{example}

\begin{example}\label{ex_tanh} 
   Let $\Omega = (-1,1)^2$$u:[-1,1]^2\to\mathbb{R}$ by
    \begin{equation}\label{xpml2}
        u(x_1,x_2) := -(1 - x_1^2)(1 - x_2^2)\tanh\left(50\left((x_1 - 1/2)^2 + x_2^2 - 0.01\right)\right), \text{ } (x_1,x_2)\in[-1,1]^2;
    \end{equation}
     the right-hand side $f$ is chosen so that $u$ is the solution to \eqref{BVP}.
\end{example}

\subsection{$h$-Version.}\label{DG_hversion}
First, we consider the $h$-version of \eqref{IPDG} for both examples. We select fixed polynomial degrees $r=1,2$ and we employ a sequence $\{\Ta_j\}_{j=0}^4$ of meshes, with $h_j = \max_{K\in\Ta_j}h_K = 0.2/2^j$, $j=0,\dotso,4$. The errors with respect to the norms $\trn\cdot\trn_p$ and the quasi-norms $\trn\cdot\trn_{(\Ta_j;p,u_h)}$ are presented in logarithmic scale, together with their least-square approximations in Figures \ref{fig:hVersion_sym} and \ref{fig:hVersion_tanh}, for $p=2.5,4,4.5$.

\begin{figure}
    \centering
    \subfloat[\centering]{{\includegraphics[width=7.6cm]{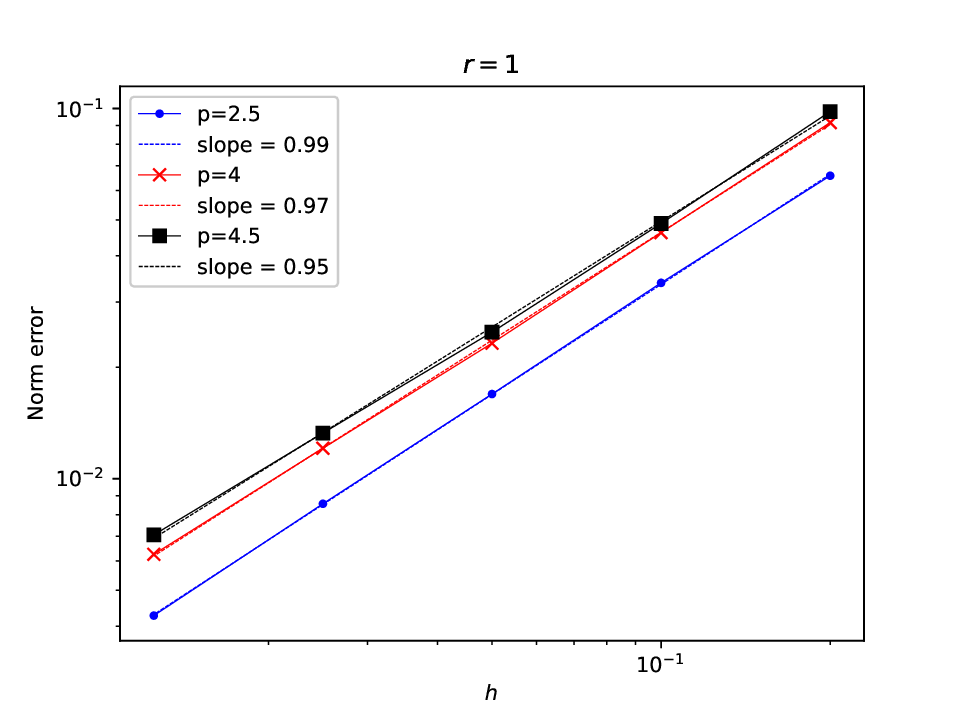} }}
    \hspace{0.00001cm}
    \subfloat[\centering]{{\includegraphics[width=7.6cm]{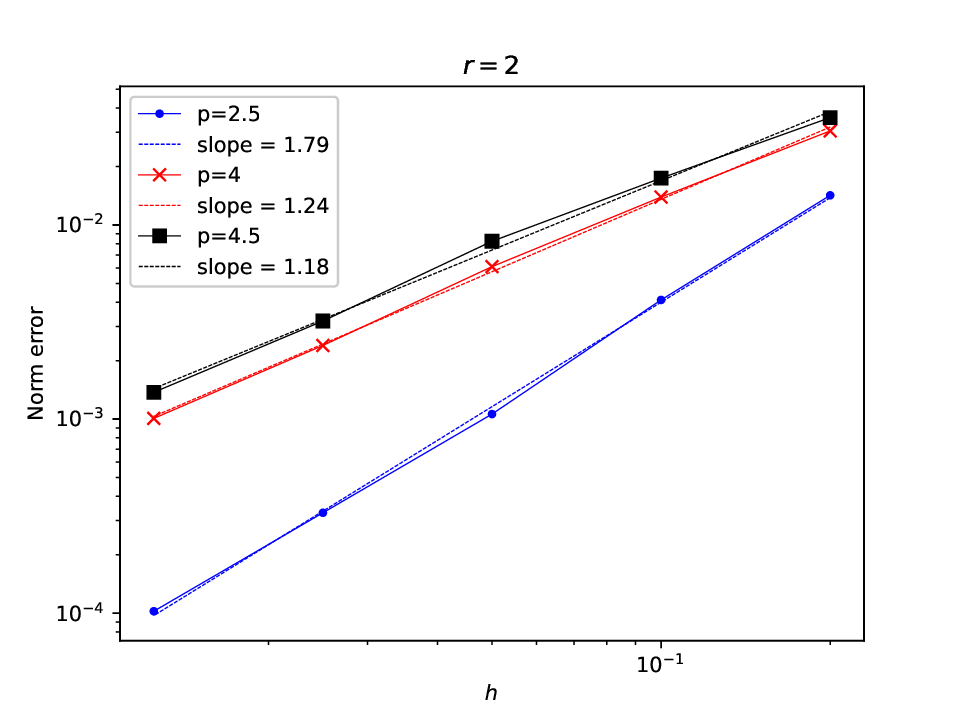} }}
     \hspace{0.00001cm}
    \subfloat[\centering]{{\includegraphics[width=7.6cm]{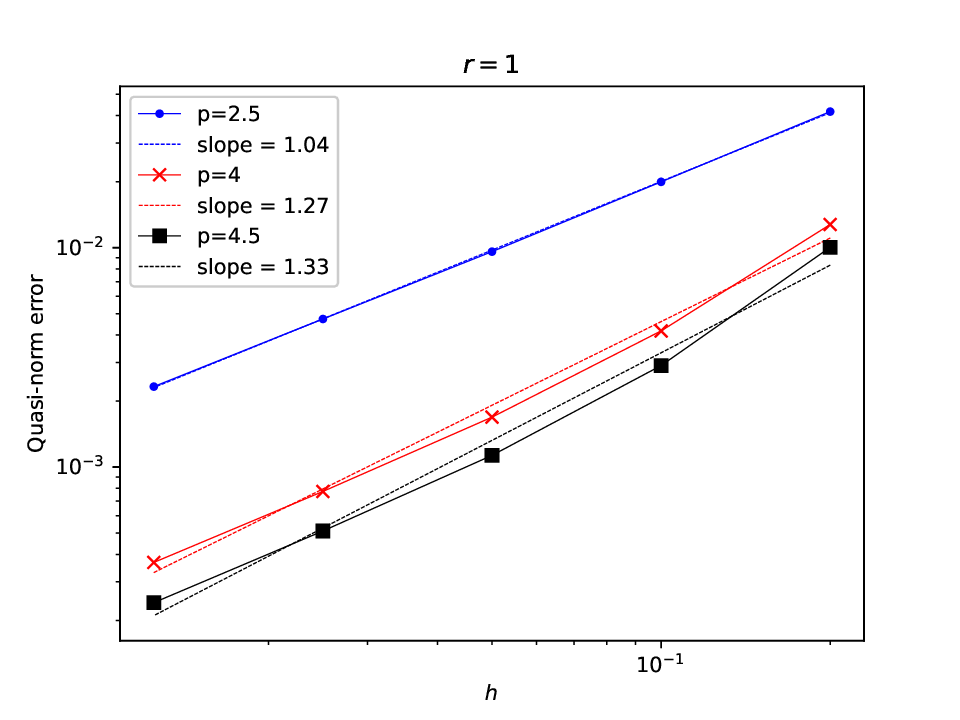} }}
    \hspace{0.00001cm}
    \subfloat[\centering]{{\includegraphics[width=7.6cm]{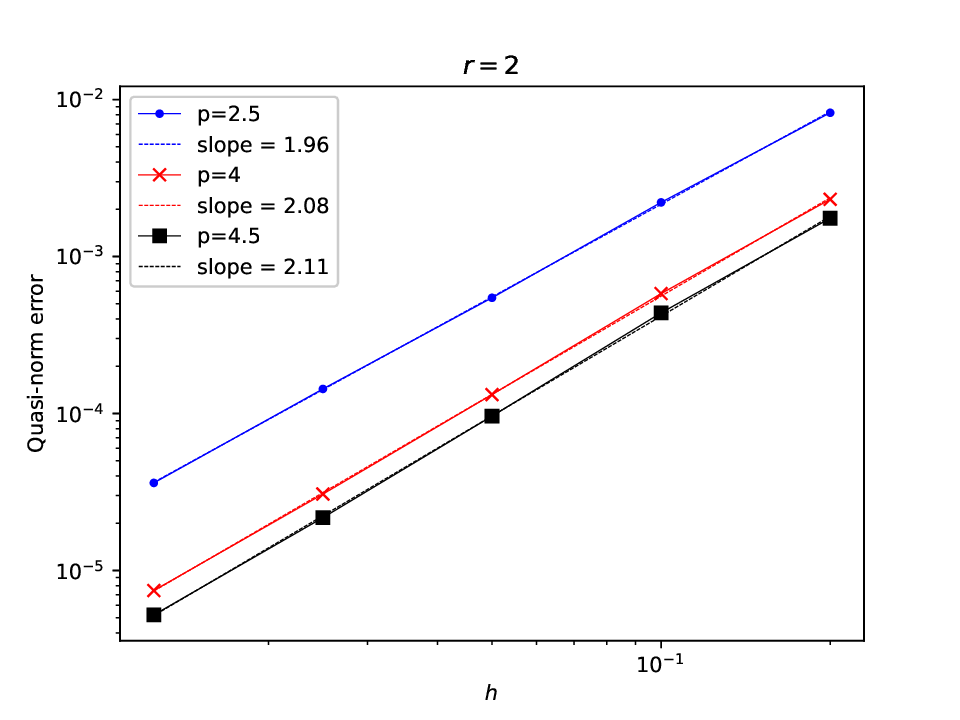} }}
    \caption{Example \ref{ex_sym}, $h$-version: (a) Norm errors for $r=1$. (b) Norm errors for $r=2$. (c) Quasi-norm errors for $r=1$. (d) Quasi-norm errors for $r=2$.}
    \label{fig:hVersion_sym}
\end{figure}

\begin{figure}
    \centering
    \subfloat[\centering]{{\includegraphics[width=7.6cm]{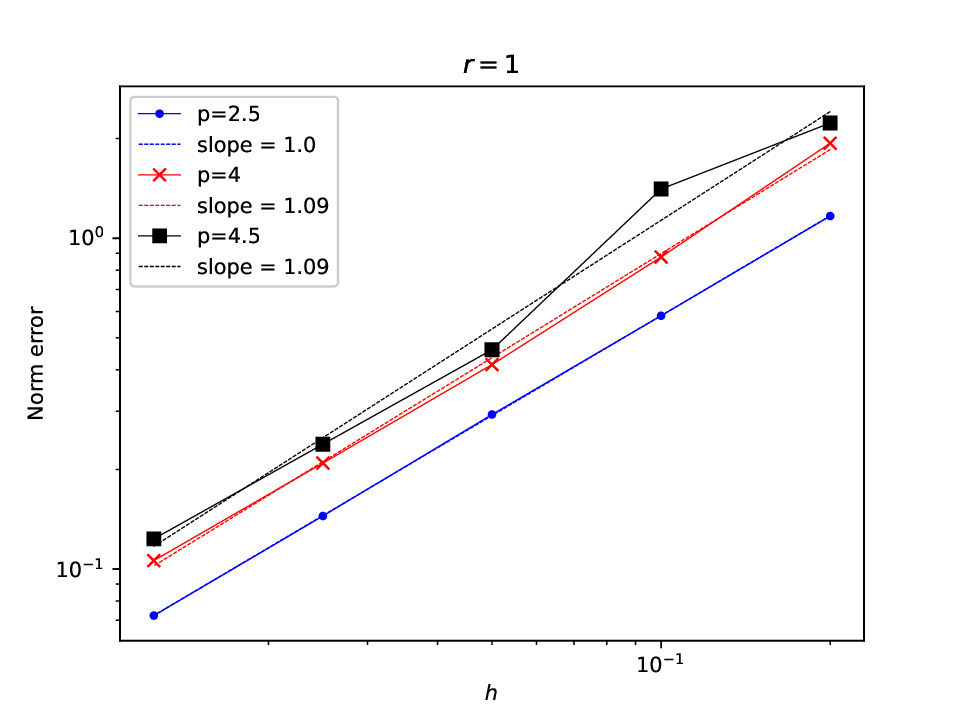} }}
    \hspace{0.00001cm}
    \subfloat[\centering]{{\includegraphics[width=7.6cm]{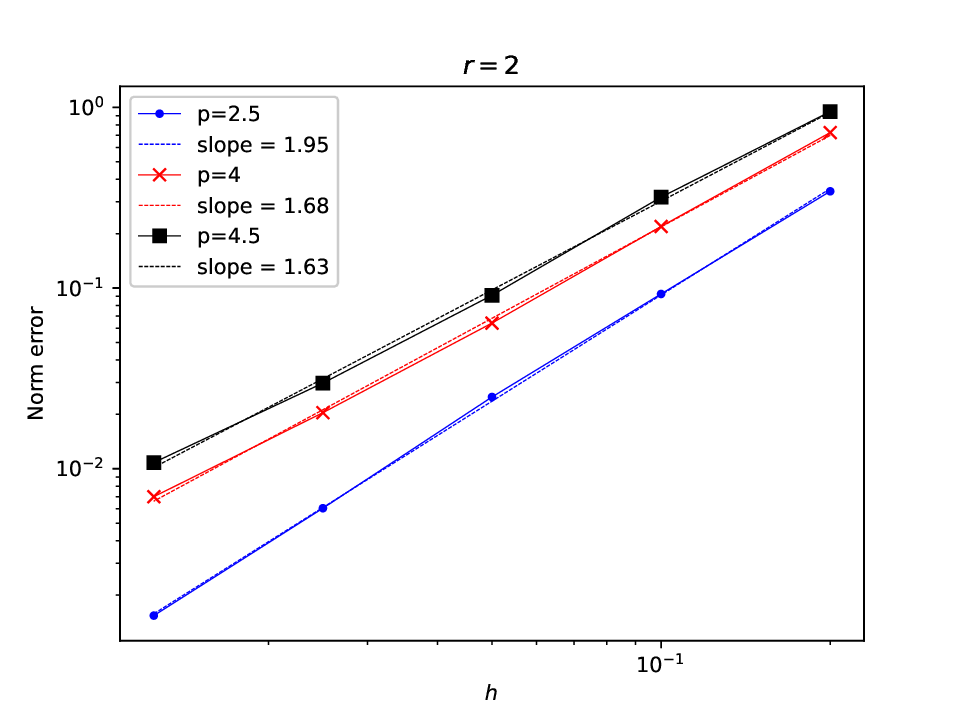} }}
     \hspace{0.00001cm}
    \subfloat[\centering]{{\includegraphics[width=7.6cm]{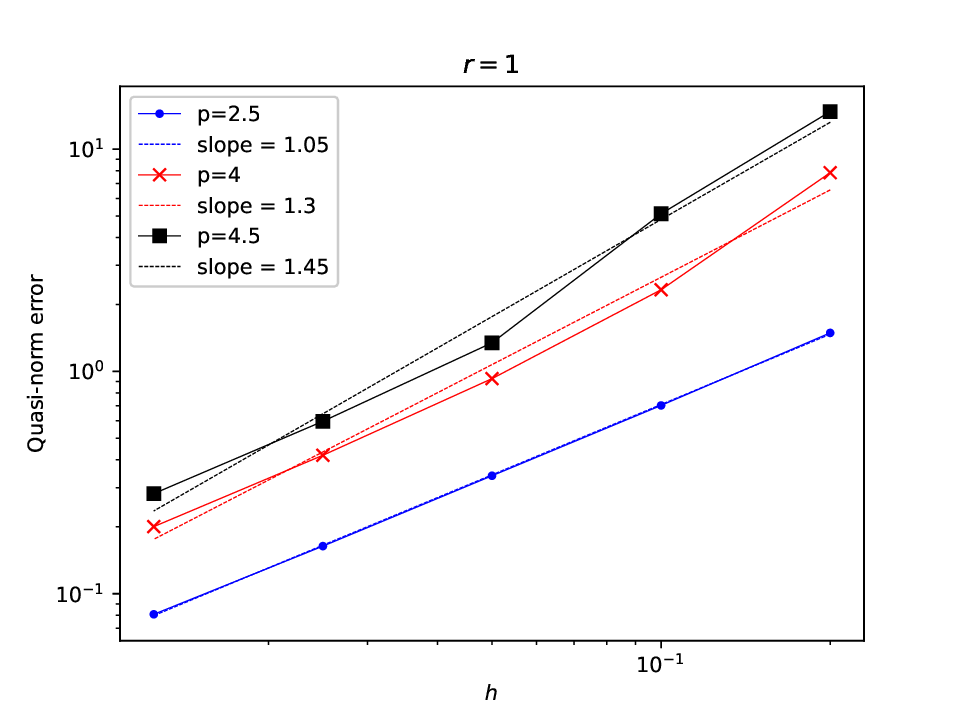} }}
    \hspace{0.00001cm}
    \subfloat[\centering]{{\includegraphics[width=7.6cm]{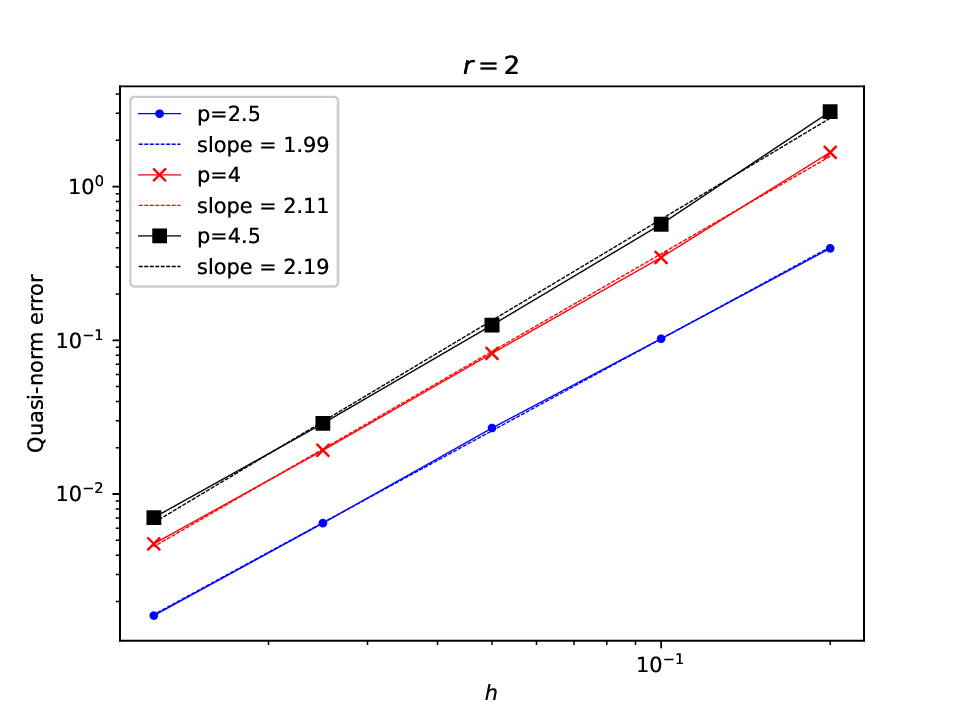} }}
    \caption{Example \ref{ex_tanh}, $h$-version: (a) Norm errors for $r=1$. (b) Norm errors for $r=2$. (c) Quasi-norm errors for $r=1$. (d) Quasi-norm errors for $r=2$.}
    \label{fig:hVersion_tanh}
\end{figure}

For both examples, Figures \ref{fig:hVersion_sym} and \ref{fig:hVersion_tanh} indicate that the quasi-norm errors exhibit the optimal convergence rates, as per the error analysis. Regarding the norm errors, we observe optimal convergence for $r=1$. For $r=2$, the suboptimality is visible in the actual computations. In fact, we even observe that the rates are decreasing as $p$ increases, which is in accordance to the error estimates, even though suboptimality appears to be slightly lower in practice compared to the respective \emph{a priori} error bounds.

\subsection{$p$-Version.}\label{DG_pversion}
We now consider the $p$-version of \eqref{IPDG} for both examples. We select a fixed mesh $\Ta$, with $h = \max_{K\in\Ta}h_K = 0.2$ and compute the corresponding discrete solutions with polynomial degrees $r=1,\dotso,5$. The norm and quasi-norm errors are presented in semi-logarithmic scale, together with their least-square approximations in Figures \ref{fig:pVersion_sym} and \ref{fig:pVersion_tanh}, for $p=2.5,4,4.5$.

\begin{figure}
    \centering
    \subfloat[\centering]{{\includegraphics[width=7.6cm]{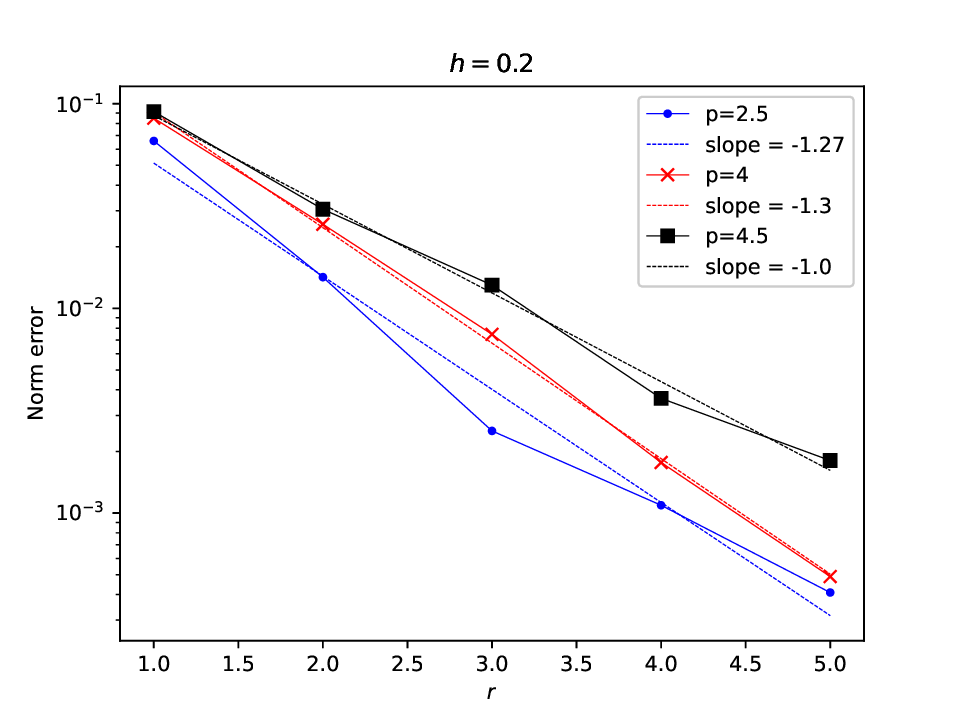} }}
    \hspace{0.00001cm}
    \subfloat[\centering]{{\includegraphics[width=7.6cm]{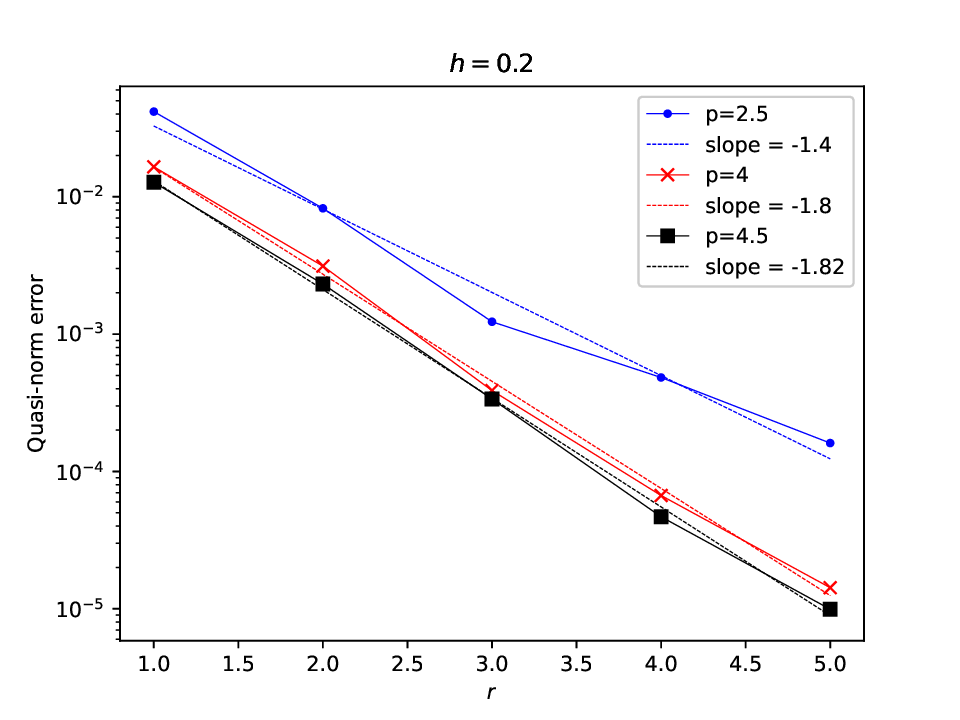} }}
    \caption{Example \ref{ex_sym}, $p$-version: (a) Norm errors. (b) Quasi-norm errors.}
    \label{fig:pVersion_sym}
\end{figure}

\begin{figure}
    \centering
    \subfloat[\centering]{{\includegraphics[width=7.6cm]{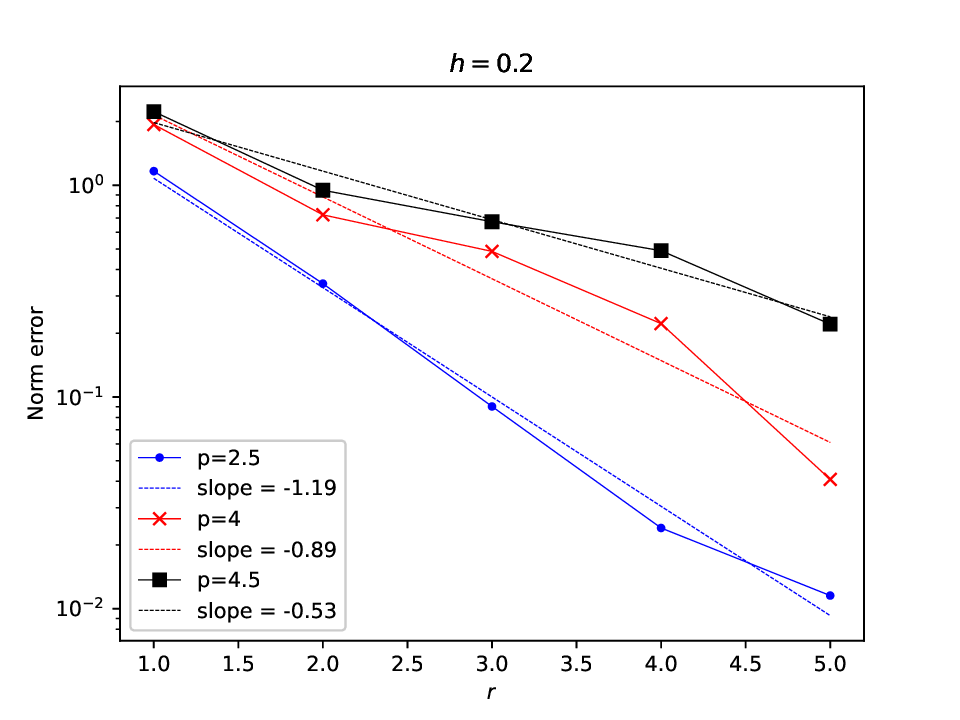} }}
    \hspace{0.00001cm}
    \subfloat[\centering]{{\includegraphics[width=7.6cm]{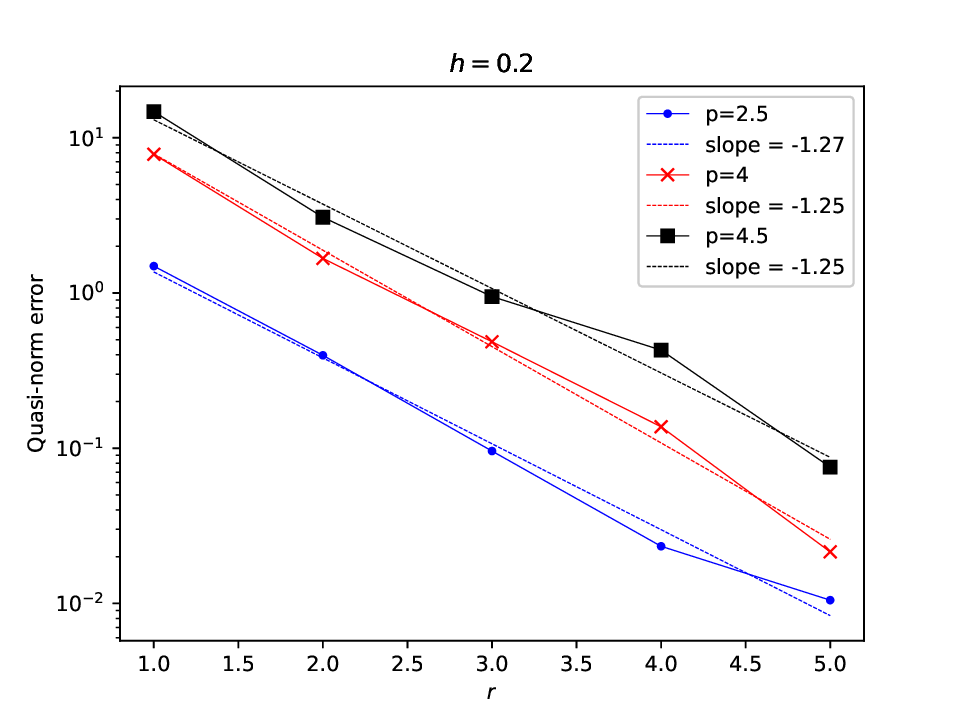} }}
    \caption{Example \ref{ex_tanh}, $p$-version: (a) Norm errors. (b) Quasi-norm errors.}
    \label{fig:pVersion_tanh}
\end{figure}

Figures \ref{fig:pVersion_sym} and \ref{fig:pVersion_tanh} show exponential decay of the errors with the local polynomial degrees. We observe that, in all cases considered, the norm rates are lower than the corresponding quasi-norm rates.

\section{Concluding remarks} 
A robust IPDG method discretizing the $p$-Laplacian equation, delivering provably superlinear convergence rate in appropriate scenarios has been developed and analyzed. Both simplicial meshes and meshes comprising general, possibly curved polytopic elements have been considered. The analysis hinges on new trace-inverse estimates, which may be of independent interest. The analysis on essentially arbitrarily-shaped element meshes highlights the need of extension of standard best approximation results to non-Hilbertian settings. Numerical experiments have also been presented, highlighting the relevance of the theoretical findings.

\section*{Acknowledgments} This work was supported by the Hellenic Foundation for Research and Innovation (H.F.R.I.) under the “First Call for H.F.R.I. Research Projects to support Faculty members and Researchers and the procurement of high-cost research equipment grant” (Proj. no. 3270). PP was also supported by H.F.R.I. under “4th Call for H.F.R.I. Scholarships for PhD Candidates” (Proj. no. 10688), and from the European Research Council (ERC) under the European Union’s Horizon 2020 research and innovation programme (Grant agreement No. 101125225). Also, EHG gratefully acknowledges the financial support of EPSRC (grant number EP/W005840/2).

\end{document}